\documentclass[a4paper,leqno]{article}
\usepackage[a4paper,centering,vscale=0.77,hscale=0.77]{geometry}\usepackage{amsmath,amsthm,amssymb,bbm,bm}
\usepackage{amsmath,amsthm,amssymb,bbm,bm}
\usepackage{xcolor}
\usepackage{verbatim}
\usepackage{todonotes}
\usepackage{mathrsfs}

\usepackage{thmtools}
\usepackage[pdfdisplaydoctitle,colorlinks,breaklinks,urlcolor=blue,linkcolor=blue,citecolor=blue]{hyperref}
\usepackage{enumitem}
\usepackage{cite}

\newtheorem{theorem}{Theorem}[section]
\newtheorem{lemma}[theorem]{Lemma}
\newtheorem{proposition}[theorem]{Proposition}
\newtheorem{corollary}[theorem]{Corollary}
\newtheorem{definition}[theorem]{Definition}
\newtheorem{remark}[theorem]{Remark}

\newtheorem{notation}[theorem]{Notation}

\newcommand{\df }{\mathrm{d}}
\newcommand{\im }{\mathrm{i}}

\newcommand{\Prob}{\mathbb{P}}
\newcommand{\Filtration}{\mathbb{F}}

\newcommand{\F}{\mathscr{F}}

\def\cB{\mathscr{B}}

\numberwithin{equation}{section}

\begin{document}

\title{
Global well posedness and ergodic results in regular Sobolev spaces  for the nonlinear Schr\"odinger equation with multiplicative noise and arbitrary power of the nonlinearity} 
\author{Zdzis{\l}aw Brze{\'z}niak
\thanks{Department of Mathematics, University of York,
			Heslington, York, YO105DD, UK
		E-mail: \texttt{zdzislaw.brzezniak@york.ac.uk}}
, Benedetta Ferrario
\thanks{ Dipartimento di Scienze Economiche e Aziendali, Universit\`a di Pavia, 27100 Pavia, Italy.
E-mail: \texttt{benedetta.ferrario@unipv.it}}
, Mario Maurelli
\thanks{
Dipartimento di Matematica, Universit\`a di Pisa, Largo Bruno Pontecorvo 5, 56127 Pisa, Italy.
E-mail: \texttt{mario.maurelli@unipi.it}}
, Margherita Zanella
\thanks{Dipartimento di Matematica, Politecnico di Milano,
Via E.~Bonardi 9, 20133 Milano, Italy.
E-mail: \texttt{margherita.zanella@polimi.it}}
}


\maketitle

\begin{abstract} We consider the nonlinear Schr\"odinger equation on the $d$-dimensional torus $\mathbb T^d$, with the nonlinearity of polynomial type $|u|^{2\sigma}u$. For any $\sigma \in \mathbb N$ and $s>\frac d2$ we prove that adding to this equation a  suitable  stochastic forcing term  there exists a unique global solution for {\em any} initial data in $H^s(\mathbb T^d)$.
The effect of the noise is to prevent blow-up in finite time, differently from the deterministic setting. Moreover we prove existence of invariant measures and their uniqueness under more restrictive assumptions on the noise term.
\end{abstract}

\noindent
{\textbf{Keywords:}} Stochastic nonlinear Schr\"{o}dinger equation - multiplicative noise - 
regularization (non explosion) by noise - Lyapunov functions - invariant measure - exponential stability.
\\
{\bf MSC}:  
35Q55,  
35R60, 
60H30, 
60G10, 
60H15. 

\bigskip

\tableofcontents



\section{Introduction}
The nonlinear Schr\"odinger equation is  one of the basic models
for nonlinear waves and  has many physical applications,  e.g. to nonlinear optics, plasma
physics and quantum field theory, see e.g. \cite{SulSul,Turitsyn_2012} and references therein.
\\
In this article, we deal with the  nonlinear Schr\"odinger equation (NLS)  on the $d$-dimensional torus $\mathbb{T}^d:=(\mathbb{R}/2\pi\mathbb{Z})^d$, for $d\ge 1$, with  stochastic forcing term
\begin{equation}
\label{NLS}
\begin{cases}
{\rm d} u(t,x)+\left[ \im \Delta  u(t,x)+\im  \alpha |u(t,x)|^{2\sigma} u(t,x)  \right] \,{\rm d}t 
= \phi(u(t,x))  \,{\rm d} W(t), \quad (t, x) \in (0, \infty) \times \mathbb{T}^d,
\\
u(0,x)=u^{0}(x), \quad x \in \mathbb{T}^d.
\end{cases}
\end{equation}
Here  
  $\im=\sqrt{-1} $,  $\sigma>0$ and $W=(W(t):t\geq 0)$ is a classical real-valued Wiener process.  The equation is called  focusing when $\alpha=+1$ and  defocusing when $\alpha=-1$. In the present paper we will study a more general case when 
  $\alpha \in \mathbb C$; therefore  one can view  equation \eqref{NLS}  as 
  the stochastic version of the equation considered by Kato in \cite{Kato}, see also \cite[Section 4.4]{Cazenave}.

Our first main result in this paper is  a kind  of regularization by noise for the deterministic NLS equation, in the following sense. 
We find sufficient conditions on the diffusion coefficient $\phi$ for the  global-in-time existence of  the solutions to the stochastic NLS equation \eqref{NLS}, whereas a similar result does not hold for the corresponding deterministic problem (i.e. equation \eqref{NLS} with $\phi=0$).
To be more precise, for every $\sigma \in \mathbb{N}$ we find a sufficient condition on $\phi$, such that if  $\alpha \in \mathbb{C}$, $s>\frac{d}2$ and  $u^0 \in H^s(\mathbb{T}^d)$, then the NLS equation \eqref{NLS} has a global solution. 
In particular, the global existence holds in both focusing and defocusing cases. Our proof relies on a tightness method based on the choice of a suitable Lyapunov function. 
Secondly, we study the long time behavior of the solution process. We prove the existence of \emph{stationary} solutions,  again under suitable assumptions on $\phi$  by modifying the Lyapunov function. 
Moreover, under additional conditions of $\phi$, we prove that
all the solutions tend to zero, almost surely, and we establish some stability results for the zero solution. Hence,
$   \mu=\delta_0$ is the unique invariant measure.  

Let us review in more details the known results.

\subsection{Existence of global solutions}

From the mathematical point of view, the question of the (global or local)  existence and of the uniqueness of the solutions  has been widely studied; the starting model is the unforced NLS equation
\begin{equation}
\label{NLS_det}
\begin{cases}
\partial_t u(t,x)+ \im \Delta  u(t,x)+\im  \alpha |u(t,x)|^{2\sigma} u(t,x) 
=0, &
\\
u(0,x)=u^{0}(x).&
\end{cases}
\end{equation}
Different technical issues have to be  faced, depending on the spatial domain (its shape and its dimension), 
the power of the nonlinearity $\sigma$ and the parameter $\alpha=\pm 1$. 

The first global-in-time existence results  for the \textbf{deterministic} NLS equation \eqref{NLS_det} have been obtained  
in the full space $\mathbb R^d$
for $\sigma<\sigma_{cr}$, where the critical value $\sigma_{cr}$ depends on  $\alpha$, the spatial dimension $d$ 
and the state space  in which the dynamics leaves. Then big efforts have been done to deal with the critical  case 
$\sigma=\sigma_{cr}$ and the supercritical  case $\sigma>\sigma_{cr}$. 

A lot of papers treat what is called the energy subcritical and critical cases, i.e. $0\le \sigma \le \frac 2{(d-2)_+}$ in the defocusing case ($\alpha=-1 $) and $0\le \sigma \le \frac 2d$ in the focusing case ($\alpha=1$). It is well known that solutions of the NLS equation \eqref{NLS_det} satisfy various conservation laws. In particular,  for any smooth enough solution the mass 
\[
\mathcal{M}(u):=\|u\|_{L^2}^2
\]
and the energy 
\[
\mathcal{E}(u):=\frac 12 \|\nabla u\|_{L^2}^2 -\frac \alpha{2+2\sigma} \|u\|_{L^{2+2\sigma}}^{2+2\sigma}
\]
are conserved in time. Thus, it seems natural to study the problem in $H^1(\mathbb{R}^d)$, the so-called energy space. In fact, both in the focusing and defocusing case, the  conserved quantities can be used to get bounds in $H^1(\mathbb{R}^d)$ and show the existence of global solutions for \eqref{NLS}; see e.g. \cite{Cazenave}. The $H^1$-theory has been limited to sub-critical and critical powers; for larger values of $\sigma$ (supercritical power), the nonlinear term is difficult to handle.
In the supercritical case the Cauchy problem for \eqref{NLS_det} considered in $\mathbb{R}^d$ is solved only locally in time or globally for small data on some Sobolev spaces (see \cite[Section 6.2]{Cazenave} and \cite{CW}). 
Moreover, cases in which there is blow-up in finite time are known. For instance,  there  is blow-up in finite time  
to \eqref{NLS_det} in the focusing supercritical case, i.e.  
$\frac 2d\le \sigma \le\frac{2}{(d-2)_+}$ and $\alpha=1$,
provided, e.g. the energy of the initial data is negative  (see e.g. \cite[Chapter 6.5]{Cazenave}). 
In \cite{Merle_2022} for the defocusing NLS equation there are examples of   energy supercritical parameters $\sigma $ when $d\ge 5$ for which 
 there  is blow-up in finite time  if the initial data $C^\infty$  are well localized spherically symmetric  functions.
 
A challenging open problem is the global well-posedness of the NLS equation \eqref{NLS} in the more regular Sobolev space $H^s(\mathbb{R}^d)$, with $s>\frac d2$. 
The proof of the existence of a local solution is not an issue in $H^s(\mathbb{R}^d)$ since, via the Sobolev embedding $H^s(\mathbb{R}^d) \subset L^\infty(\mathbb{R}^d)$, one easily controls the nonlinear term (with an arbitrary large power); 
see e.g. \cite[Section 4.10]{Cazenave}.  
However the existence of a \emph{global} solution is harder, since a priori control in the $H^1$-norm of the solution no longer implies a priori control of the   $H^s$-norm, when $d \ge 2$. 
Hence, one cannot use the conservation of the mass and the energy to deduce the existence of  a global solution. 

For what concerns the NLS equation \eqref{NLS_det} on a $d$-dimensional torus the literature is much scarcer. The behaviour in the periodic setting 
is  strictly worse than in the case of the full space. 
The $H^1(\mathbb{T}^d)$-critical and subcritical cases are the more studied, starting from the work by Bourgain \cite{Bourgain}; see also \cite{BGT04, Herr+Tataru+Tzvetkov_2011} and the references therein. On the torus case as well,  sufficient conditions
for blow-up have been studied; see, e.g.,  \cite{OT} for the 
 the one dimensional quintic focusing NLS equation  
  and the therein references. 
  
The global well-posedness of the NLS equation \eqref{NLS_det} in the Sobolev space $H^s$, for $s>\frac d2$ remains a challenging problem also in the case of $\mathbb{T}^d$, but recently some progress in this direction has been done in the beautiful paper by Sy \cite{Sy}.
He considers the NLS equation on $\mathbb{T}^3$ with arbitrary large parameter $\sigma$ and constructs 
non-trivial  invariant probability measures $\mu$ supported on the Sobolev space $H^{s}(\mathbb{T}^3)$, $s\ge 2$.
Then, he shows that the equations are globally well-posed on the supports of these measures. As far as we know, this is the only paper proving a global well posedness result, for an arbitrarly large power of the nonlinearity, in the smooth Sobolev space $H^s(\mathbb{T}^3)$, $s\ge 2$ (as stated in \cite[Remark 1.1]{Sy} the result can be generalized to the case of $\mathbb{T}^d$, for $d \ge 3$, considering the Sobolev space $H^s(\mathbb{T}^d)$ with $s >\frac d2$). The only restriction lies in the fact that the global existence  result holds for $\mu$-a.e. initial data (that, on the other hand, are not required to be small).
  
  \bigskip 
In the last years the effect of a stochastic perturbation affecting equation \eqref{NLS_det} has been investigated. 
Well-posedness results have been proved for the {\bf stochastic}  NLS equation, that is the NLS equation with a    random forcing, both in additive and multiplicative (It\^o or Stratonovitch) form, by many authors. The first results are due to De Bouard and Debussche
see e.g. \cite{DeBouard+Debussche_1999} and \cite{DeBouard+Debussche_2003}, that, in particular, address the $H^1(\mathbb{R}^d)$-subcritical case, both in the focusing and defocusing case. 
Then, in the same setting, we mention the paper by Barbu, R\"{o}ckner and Zhang \cite{BRZ} and by Hornung \cite{H18}. For the critical case we mention the recent papers \cite{OO, Z}.

In the stochastic setting as well, working in the full space $\mathbb R^d$ is easier than in the periodic setting. 
The papers by Brze{\'z}niak and Millet \cite{BrzezniakStrichartz} and Brze{\'z}niak, Hornung and Weis  \cite{BHW2019} address the case of compact riemannian manifolds dealing with the $H^1$-subcritcal (focusing and defocusing) case.
 To the best of our knowledge, there are no results in the stochastic case for an arbitrary large power of the nonlinearity
 , both in the full space $\mathbb{R}^d$ or in the torus case $\mathbb{T}^d$.

\subsection{No-blow up by noise }

When dealing with the stochastic case, an interesting question is   understanding if the presence of a (suitable) noise guarantees the existence of a global solution, especially in cases where it is known the deterministic counterpart presents blow up phenomena.  
The answer to this question depends strongly on the type of nonlinearity and on the type of noise. 
As mentioned above, when working in the energy space $H^1$, there are blow-up phenomena for the NLS equation \eqref{NLS_det} in the focusing supercritical case. 
In this context, working on the full space $\mathbb{R}^d$,  De Bouard and Debussche  proved in \cite{DeBouard+Debussche_2002, DeBouard+Debussche_2002-PTRF} that, with an additive, nondegenerate and coloured-in-space noise, for any initial data, the blow-up happens before an arbitrary $t>0$ with  positive probability. 
 Still in the focusing supercritical case, the same authors proved in \cite{DeBouard+Debussche_2005} that the conservative, i.e. Stratonovich, noise
accelerates the blow-up with positive probability. The effect of the noise changes completely in the non-conservative case: Barbu et al  studied in \cite{BarbuRZ}  a stochastic NLS equation with linear
non-conservative noise, i.e. not in the Stratonovich form, and proved that, for every  initial condition in $H^1(\mathbb{R}^d)$, the probability of no blow-up converges to $1$, as the noise coefficient tends to infinity.
\\
Another, quite different case is when the noise coefficient $\phi$ is actually a Schr\"odinger operator itself, namely $\phi(u)=i\Delta u$ (modulated Schr\"odinger equation).  
In this case, the noise is also regularizing: Debussche and Tsutsumi showed in \cite{Debussche+Tsutsumi_2011} that this noise makes the 1D quintic 
focusing NLS globally well-posed. 
Chouk and Gubinelli in \cite{Chouk+Gubinelli_2015} extended this result (applying also new techniques \`a la Young integral) to global well-posedness in other regularity classes and to local well-posedness for other critical NLS equations, in dimension $d=1$ and $d=2$; a further extension to the critical case in any dimension is given by Duboscq and R\'eveillac in \cite{DubRev2022}.

The question of the effect of nonlinear noise on blow-up has been widely studied also outside the context of Schr\"odinger equation. The problem is already of interest for finite-dimensional ODEs, where the drift has superlinear growth. In this context, it is known that a suitable noise with superlinear growth can avoid blow-up, even with just a one-dimensional radial structure, see e.g. \cite{AppMaoRod2008}, \cite[Example 5.4]{Gar1988} among many other examples; 
in those cases, no blow-up can be inferred by the application of a Lyapunov function criterion, the Lyapunov function being a logarithmic or power function of the Euclidean norm. There are also situations where blow-up happens for the ODE only along special directions, in such cases blow-up can be avoided even by an additive noise, with a suitable choice of an anisotropic Lyapunov function, see e.g. \cite{AthKolMat2012} again among many other works.
\\
Concerning nonlinear noise in infinite dimension, a case that has been studied is when the noise multiplies the nonlinear flux: regularizing properties of this type of noise have been proved for scalar conservation laws (e.g. Gess and Souganidis \cite{GesSou2017}, Chouk and Gess \cite{ChoGes2019}) and for Hamilton-Jacobi equations (e.g. Gassiat and Gess \cite{GasGes2019}, Gassiat, Gess, Lions and Souganidis \cite{GasGesLioSou2024}), in a similar spirit to case of modulated Schr\"odinger equation. 
The use of a smooth superlinear growth noise - like the one we use here - in the context of SPDEs, seems more recent. Among the first works in this direction, Ren, Tang and Wang considered in \cite{RenTanWan2024} a general framework for no blow-up for SPDEs and used it to show no blow-up by a superlinear noise in two examples, one of which is related to KdV equation. 
Alonso-Oran et al. showed in \cite{AloMiaTan2022} no blow-up for a one-dimensional, transport-type PDE by a superlinear noise. Tang and Wang \cite{tang2022general} and Bagnara, Maurelli and Xu \cite{BagMauXu2023} give general settings for no blow-up by superlinear noise in SPDEs and applied these settings to Euler equations and (for \cite{tang2022general}) other SPDEs in fluid dynamics and KdV equation; see also \cite{CriLan2024}. 
The no blow-up criterion \cite[Theorem 3.1]{tang2022general} could also be applied to the NLS equation, yielding a no blow-up result  similar to the one here though with some technical differences (they would need $s>2+d/2$ and a large noise intensity). Nevertheless, we are not aware of a direct example of no blow-up by superlinear noise in Schr\"odinger equations.

\subsection{Long-time behavior of the solution}

Another important question on stochastic NLS equations is about the effect of the noise on invariant distributions and long-time behaviour. Most 
of the literature considers the case of invariant measures for damped versions of the NLS equation: the idea is that invariant distributions arise in the competition between the noise injecting energy and the damping term dissipating this energy; for example, see \cite{BFZ24, BFZ23, FZ, EKZ, Kim, DO05}. 

Without the damping term, the effect of other noises on invariant distributions and long-time behaviour is different and seems less studied.
 Concerning the modulated Schr\"odinger equation, that is the NLS equation \eqref{NLS_det} with $=i\Delta u$ replaced by 
 with $i\Delta u \partial_t W$, Dumont, Goubet and Mammeri showed in \cite{DumGouMam2021} the exponential decay in $L^\infty$ around the zero solution, for a high-order nonlinearity in the one-dimensional case. 

Outside the context of the  Schr\"odinger equation, in the finite-dimensional setting it is known (see, e.g., \cite{K2}) that the existence of a suitable Lyapunov function implies the existence of an invariant distribution. 
Moreover,  in the infinite-dimensional case there are several examples of noises with stabilizing effects, though in most of them the coefficient in front of the white noise is  constant (i.e., the noise is additive) or a linear or Lipschitz continuous nonlinearity (for a multiplicative noise). We only mention some results, without any claim of completeness. In the celebrated result \cite{ArnCraWih1983} by Arnold, Crauel and Wihstutz, stabilization is achieved by a linear noise which averages over stable and unstable directions.  In a setting closer to ours, Cerrai in \cite{Cer2005} has shown a stabilization effect by a Lipschitz (non conservative) noise for a reaction diffusion equation. We mention also \cite{Tsu2008} by Tsutsumi for a stabilization phenomenon for KdV equation with linear multiplicative noise.
\\
Concerning nonlinear multiplicative noise, in the context of scalar conservation laws, with nonlinear flux multiplying the noise, Gess and Souganidis \cite{GesSou2017} have shown the long-time convergence to the constant solution, which is the unique invariant distribution; the idea in \cite{GesSou2017} is that the noise has an averaging effect over the kinetic random variable. This result has been later extended in \cite{GesSou2017b} to include a deterministic nonlinear flux. Gassiat, Gess, Lions and Souganidis considered in \cite{GasGesLioSou2024} the case of anisotropic flux and get the  convergence in the long-time limit to non-constant stationary solutions.
\\
In the case of SPDEs with a smooth superlinear growth noise, as the one we consider here, to our knowledge the existence and uniqueness of invariant measures has not been proved yet. Crisan and Lang \cite{CriLan2024} mention that, in a future paper, they will show existence of invariant measures for stochastically controlled fluid dynamics models.

\subsection{Main results}

Compared to the literature quoted above, our result present novelities in different directions.    
When compared to the stochastic literature, as far as we know, this is the first result providing the existence of a unique global solution in the regular Sobolev space $H^s(\mathbb{T}^d)$, $s>\frac d2$, for an arbitrary large power of the nonlinear term 
and for any initial data in $H^s(\mathbb T^d)$ (both in the focusing and defocusing case).
When compared to the deterministic literature, our result can be understood as a regularization (no blow-up) by noise result. As mentioned above, for the NLS equation \eqref{NLS_det} it is known that there might be blow-up phenomena, e.g. in the focusing supercritical case.
The superlinear stochastic perturbation $\phi$ we consider in equation \eqref{NLS}  is able to prevent these phenomena without any requirement on  the initial data.  

Our technique to prove the existence of a global solution relies on some ideas contained in \cite{BagMauXu2023} and is based on a tightness argument for the sequence of finite-dimensional Galerkin approximations. 
Roughy speaking, a superlinear noise coefficient "kills the growth" of the nonlinear term  so to get good a priori estimates by 
means of a suitable Lyapunov function. 
This proves the  non-explosion in finite-time.

For what concerns the problem of existence and uniqueness of invariant measures, this is in general a quite challenging problem for the stochastic NLS equation. 
By slightly strengthen the assumptions on the diffusion coefficient $\phi$ and by modifying the Lyapunov function, the
same argument used to prove the existence of global solutions, is  adapted
first to prove the existence of invariant measures supported on $H^s(\mathbb T^d)$. Then, when $\phi(u)=f(u)u$
for suitable $f$, we prove that the zero solution is a global attractor so that 
 $\mu=\delta_0$ is the unique invariant measure. Roughly speaking, the (even stronger)  superlinear noise coefficient "force the dynamics" to converge to the zero solution, which is an equilibrium of the system. As far as we know, this is the first result proving the existence and uniqueness of the invariant measure (as well as some stability results) for the NLS on $\mathbb{T}^d$ for an arbitrary large power of the nonlinearity.

\smallskip

The paper is organized as follows. In Section \ref{math_ass_sec} we introduce the mathematical framework, state the assumptions and formulate the main results. Section \ref{S-wellposedness} is devoted to the proof of the existence of a unique global solution.  First we introduce the sequence of Galerkin approximations, then we prove its tightness and  convergence to a martingale solution of the NLS equation \eqref{NLS}. Moreover pathwise uniqueness is obtained.
In Section \ref{erg_res_sec} we prove the existence and uniqueness of the invariant measure and some stability result for the zero solution. In Appendix \ref{tight_sec_main} we collect some compactness and tightness criteria, whereas in Appendix \ref{tec_lem_sec} we provide the proof of a technical lemma.
In Appendix \ref{manifolds} we provide a generalization of our results in the case of Riemannian manifolds.

\section{Mathematical setting, assumptions and main results}
\label{math_ass_sec}

Let $\mathbb{T}^d:=(\mathbb{R}/2\pi\mathbb{Z})^d$. For $p \in[ 1, \infty)$ we denote by $L^p:=L^p(\mathbb{T}^d)$ the Lebesgue space of all complex-valued measurable $p$-integrable functions on $\mathbb{T}^d$. By $L^\infty:=L^\infty(\mathbb{T}^d)$ we denote the Banach space of Lebesgue measurable essentially bounded complex-valued functions.

We expand a periodic function in Fourier series as 
\begin{equation*}
u(x)=\frac{1}{(2\pi)^{\frac d2}}\sum_{k \in \mathbb{Z}^d}\widehat{u}(k)e^{\im k \cdot x}, \qquad \widehat{u}(k):= \frac{1}{(2 \pi)^{\frac d2}}\int_{\mathbb{T}^d}u(x)e^{-\im k \cdot x }\, {\rm d}x.
\end{equation*}
We set $\langle k\rangle:=\sqrt{1+|k|^2}$ for $k \in \mathbb{Z}^d$.
For $s\in \mathbb{R}$ we define $J^s:=(I-\Delta)^{\frac s2}$ as the operator defined in terms of Fourier series
\begin{equation*}
J^su(x)=\frac{1}{(2 \pi)^{\frac d2}}\sum_{k \in \mathbb{Z}^d} \langle k\rangle^s\widehat{u}(k)e^{\im k \cdot x}.
\end{equation*}
For $p \in [1, \infty)$ we define the \textit{
Bessel potential space} $H^{s,p}:=H^{s,p}(\mathbb{T}^d)$ as the space of all distributions $u$ such that $J^s u \in L^p$. 
We have that $J^\sigma$ is an isomorphism between the spaces $H^{s,p}$ and $H^{s-\sigma, p}$.
The norm in $H^{s,p}$ will be denoted by 
\begin{equation*}\|u\|_{H^{s,p}}:=\|J^s u\|_{L^p}.
\end{equation*}
For $p=2$ the space $H^s:=H^{s,2}$ is a complex  Hilbert space with inner product 
\begin{equation}\label{H_s_norm}
(u,v)_s = \sum_{k \in \mathbb{Z}^d} \langle k\rangle^{2s} \widehat{u}(k) \overline{\widehat{v}(k)}
\end{equation}
where $\overline v$ denotes the complex conjugate of $v$.
We  abbreviate ${H}:=L^2$.

We have the equivalence of the norms (see e.g. \cite[Remark 2.3]{CG19}) 
\begin{equation}
\label{eq_norms}
\|u\|_{s} \simeq \|u\|_{L^2}+ \|(-\Delta)^{\frac s2}u\|_{L^2}.
\end{equation}
\\
The dual of the space $H^s$ is $H^{-s}$. We denote by $_{H^{-s}}\langle \cdot, \cdot \rangle_{H^s}$ the $H^s-H^{-s}$-duality bracket; it reduces to the $H$-scalar product  $(u,v)_0$ when $u \in H$ and $v\in H^s$.

The space $H^1$ is usually called the energy space (see, e.g., \cite{Cazenave}). Differently from the approach 
to prove global existence by using the a priori estimates of the mass and of the energy, we will be able to work in any space 
$H^s$ with  $s>\frac d2$. The main point in considering $s>\frac d2$ is that $H^s\subset L^\infty$.
Actually, two parameters will play a crucial role later on: $s$ and $s'$ such that
\begin{equation}
\label{STAR}
s>s'>\frac d2,
\end{equation}
so there are  the  continuous embeddings 
\begin{equation}
\label{big_emb}
H^{s+1}\subset H^s \subset H^{s'} \subset H \simeq H^* \subset H^{-s'}\subset H^{-s}\subset H^{-s-1},
\end{equation}
\begin{equation}
\label{Sobolev}
H^s \subset H^{s'} \subset L^\infty,
\end{equation}
and the compact embedding
\begin{equation}
H^s \subset H^{s'} .
\end{equation}
It follows that  the embedding $H^{s} \subset H^{-s-1}$ is compact as well.

We will deal with the following functional spaces for a given $r>0$: 
\begin{itemize}
\item
$C_w([0,T];H^r)$ with the topology generated by the family of seminorms 
$\|u\|_{k,v}:=\displaystyle\sup_{t \in [0,k]}|\langle u(t),v\rangle|$, $k \in \mathbb{N}$, $v \in H^{-r}$; 
\item
$L^\infty(0,T;H^r)$ with the topology induced by the norm
 $\displaystyle\operatorname*{esssup}_{t \in [0,T]}  \|u(t)\|_{r}$; 
\item
$C([0,T];H^r)$ with the topology induced by the norm $\displaystyle\sup_{t \in [0,T]}\|u(t)\|_{r}$; 
\item
$C^{0,\beta}([0,T];H^{-r})$ with the topology induced by the norm 
\begin{equation}\begin{split}
\label{norm_C}
\|u\|_{C^{0,\beta}([0,T];H^{-r})}
&:=\|u\|_{L^\infty(0,T;H^{-r})}+ [u]_{C^{0, \beta}([0,T];H^{-r})}
\\
&:=\displaystyle\operatorname*{esssup}_{t \in [0,T]}\|u(t)\|_{-r}+ \sup_{s, t \in [0,T]; s \ne t}\frac{\|u(t)-u(s)\|_{-r}}{|t-s|^{\beta}}.
\end{split}
\end{equation}
\end{itemize}
From \cite{Strauss} we know that for any $s<r$ we have
\[
C_w([0,T];H^s)\cap L^\infty(0,T;H^r)= C_w([0,T];H^r).
\]

\begin{notation}
 We write $u=\Re  u +\im \Im u$ to specify the real and imaginary parts of $u\in \mathbb C$.
 \\
 We write $a\lesssim b$ when there exists a constant $C$ such that $a\le Cb$; to highlight the role of this constant, we write $a\lesssim_C b$. 
 \\
Given two Hilbert spaces $E$ and $F$, we denote by $\mathcal{L}(E,F)$ the space of all linear bounded
operators $B: E\to F$ and abbreviate $\mathcal{L}(E):=\mathcal{L}(E,E).$
 \\ 
 We denote the open ball centered at the origin with radius $R$ by $\mathbb{B}_R$ and its  complement by $\mathbb{B}^c_R$. In order to emphasize the underlying topology, we write $\mathbb{B}_{R, \mathcal{H}}$, i.e. $\mathbb{B}_{R, \mathcal{H}}$ is a subset of $\mathcal{H}$ and the radius $R$ of the ball is measured in the $\mathcal{H}$-norm. 
\end{notation}


\subsection{The linear operator}
We set
\[
A=-\Delta,\qquad \mathcal D(A)=H^2.
\]
$A$ is a non-negative self-adjoint operator on $H$, densely defined. 
We will consider it as  a linear self-adjoint  operator in $H^s$ with domain 
$H^{s+2}$.
Moreover, it defines a unitary $C_0$-group
$(e^{itA})_{t\in \mathbb R}$ in $H^s$.

In the space $H$ 
 we consider the complete orthonormal basis $\{e_k\}_{k\in \mathbb Z^d}$
of the eigenfunctions of the operator $A$. We have
$Ae_k= |k|^2e_k$, with
\[
e_k(x)= \frac{1}{(2\pi)^{\frac d2}}e^{\im k\cdot x}
\]
So the eigenvalues  are  $\lambda_k=|k|^2$. Moreover,  $\|e_k\|_s=\langle k\rangle^{s}$.

 Later on we will need the finite dimensional operator 
 $P_n:H\to H_n=Span\{e_k: |k|\le n\}$  defined by
\begin{equation}
\label{P_n_def}
P_n  u = \sum_{k:|k|\le n} (u,e_k)_0 \ e_k.
\end{equation}

\subsection{The nonlinear operator}
We write the nonlinearity as 
\begin{equation}
\label{F}
F(u):= |u|^{2\sigma}u
\end{equation}
and assume that
\begin{equation}\label{sigma-intero}
\sigma \in \mathbb N.
\end{equation}
In Remark \ref{non-intero}  we consider non integer values of $\sigma$.

In the following Lemma we collect some properties of the nonlinear operator $F$. The proof of the result relies on the Moser Estimate (see \cite{Benyi+Oh+Zhao})
that we recall here in the form more suitable for our needs. Let $r \ge 0$, then 
\begin{equation}
\label{Moser}
\|v_1 v_2\|_{r} \lesssim_{r,d} \|v_1\|_{L^\infty}\|v_2\|_{r}+\|v_1\|_{r} \|v_2\|_{L^\infty}, 
\qquad \forall \ v_1, v_2 \in L^\infty \cap H^r.
\end{equation}

\begin{lemma}
\label{Lemma_F}
\begin{itemize}
\item [(i)] Let $r \ge 0$. Then $F$ maps the space $H^r \cap L^\infty$ into $H^r\cap L^\infty$ and 
\begin{equation}
\label{F_infty}
\|F(u)\|_{r} \lesssim_{r,d,\sigma} \|u\|^{2\sigma}_{L^\infty}\|u\|_{r}, \qquad u \in H^r \cap L^\infty.
\end{equation}
\item [(ii)]
Let $r>\frac d2$. Then $F$ maps $H^r$ into $H^r$ and, for any $u, v \in H^r$, it holds 
\begin{equation}
\label{F3}
\|F(u)-F(v)\|_{r}
\lesssim_{r,d,\sigma}  \left( \|u\|_r^{2\sigma}+  \|v\|_{r}^{2\sigma}\right)
                \|u-v\|_{r} .
\end{equation} 
\end{itemize}
\end{lemma}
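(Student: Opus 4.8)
The plan is to prove both parts of Lemma~\ref{Lemma_F} by direct application of the Moser estimate~\eqref{Moser}, exploiting that $F(u)=|u|^{2\sigma}u$ is a polynomial of degree $2\sigma+1$ in $u$ and $\bar u$ when $\sigma\in\mathbb N$. The key algebraic observation is that $F(u)$ is a product of $2\sigma+1$ factors, each of which is either $u$ or $\bar u$; since conjugation preserves every norm used here ($\|\bar v\|_r=\|v\|_r$ and $\|\bar v\|_{L^\infty}=\|v\|_{L^\infty}$), all factors are interchangeable for estimation purposes. Thus I will treat $F(u)$ as a product $v_1\cdots v_{2\sigma+1}$ with each $v_j\in\{u,\bar u\}$ and estimate iteratively.

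For part (i), I would first extend the two-factor Moser estimate~\eqref{Moser} to a product of finitely many factors by induction on the number of factors. The inductive step writes $v_1\cdots v_{m+1}=(v_1\cdots v_m)\,v_{m+1}$, applies~\eqref{Moser}, and controls $\|v_1\cdots v_m\|_{L^\infty}\le\prod_{j\le m}\|v_j\|_{L^\infty}$ together with the $H^r$ bound from the inductive hypothesis. This yields the multilinear estimate
\begin{equation*}
\|v_1\cdots v_{m}\|_{r}\lesssim_{r,d,m}\sum_{j=1}^{m}\|v_j\|_{r}\prod_{i\ne j}\|v_i\|_{L^\infty}.
\end{equation*}
Applying this with $m=2\sigma+1$ and every $v_j\in\{u,\bar u\}$, each term contributes $\|u\|_r\|u\|_{L^\infty}^{2\sigma}$, giving exactly~\eqref{F_infty}. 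The fact that $F$ maps $H^r\cap L^\infty$ into itself follows since the right-hand side is finite; one should also note $F(u)\in L^\infty$ trivially from $\|F(u)\|_{L^\infty}=\|u\|_{L^\infty}^{2\sigma+1}$.

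For part (ii), the extra hypothesis $r>\tfrac d2$ gives the Sobolev embedding~\eqref{Sobolev}, namely $H^r\subset L^\infty$ with $\|u\|_{L^\infty}\lesssim\|u\|_r$; this lets me replace every $L^\infty$-norm on the right of~\eqref{F_infty} by the $H^r$-norm, so already $\|F(u)\|_r\lesssim\|u\|_r^{2\sigma+1}$ and $F:H^r\to H^r$. For the Lipschitz-type bound~\eqref{F3} I would use the telescoping identity for the difference of two products of equal length: writing $F(u)-F(v)$ as a sum over which factor is switched from a $u$-type to a $v$-type term,
\begin{equation*}
F(u)-F(v)=\sum_{j=1}^{2\sigma+1} w_1\cdots w_{j-1}\,(u-v)_{\text{or }(\bar u-\bar v)}\,z_{j+1}\cdots z_{2\sigma+1},
\end{equation*}
where each $w_i\in\{u,\bar u\}$ and each $z_i\in\{v,\bar v\}$, and the middle factor is $u-v$ or its conjugate. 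Since $\|\bar u-\bar v\|_r=\|u-v\|_r$, I then apply the multilinear estimate from part (i) to each of the $2\sigma+1$ summands; every summand is bounded by $\|u-v\|_r$ times a product of $2\sigma$ factors each being $\|u\|_r$ or $\|v\|_r$, which is controlled by $(\|u\|_r^{2\sigma}+\|v\|_r^{2\sigma})\|u-v\|_r$ after absorbing mixed products via Young's inequality. Summing the finitely many terms yields~\eqref{F3}.

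The main obstacle, such as it is, is bookkeeping rather than conceptual: I must set up the multilinear Moser estimate cleanly so that the constants depend only on $r,d,\sigma$, and handle the conjugation carefully so the telescoping identity in part (ii) is valid for the \emph{complex} nonlinearity $|u|^{2\sigma}u=u^{\sigma+1}\bar u^{\sigma}$. The one subtle point is that $|u|^{2\sigma}$ is only a polynomial in $u,\bar u$ precisely because $\sigma\in\mathbb N$ (cf.~\eqref{sigma-intero}); for non-integer $\sigma$ the map is merely $C^{1,\ldots}$ and a different fractional-chain-rule argument would be needed, which is presumably deferred to Remark~\ref{non-intero}.
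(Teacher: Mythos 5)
Your proposal is correct, and part (i) is essentially the paper's argument in different packaging: the paper inducts on $\sigma$, applying the two-factor Moser estimate \eqref{Moser} repeatedly with $|u|^{2}$ as the building block, whereas you induct on the number of factors to get a general multilinear Moser bound and then specialize to $2\sigma+1$ factors from $\{u,\bar u\}$; the two inductions carry identical content. Part (ii) is where you genuinely diverge. The paper invokes the pointwise inequality $\bigl||a|^{2\sigma}a-|b|^{2\sigma}b\bigr|\lesssim_\sigma(|a|^{2\sigma}+|b|^{2\sigma})|a-b|$ and then asserts, via the multiplicative algebra property of $H^r$ for $r>\frac d2$, that $\|F(u)-F(v)\|_r\lesssim\bigl(\||u|^{2\sigma}\|_r+\||v|^{2\sigma}\|_r\bigr)\|u-v\|_r$. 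As literally written this step is terse: Sobolev norms are not monotone under pointwise domination, so the pointwise estimate alone does not transfer to $H^r$; what makes it work is precisely the polynomial structure of $F$ for $\sigma\in\mathbb N$, which the paper leaves implicit. Your telescoping decomposition of $u^{\sigma+1}\bar u^{\sigma}-v^{\sigma+1}\bar v^{\sigma}$ into $2\sigma+1$ products, each containing one factor $u-v$ or $\bar u-\bar v$, followed by the multilinear estimate, the embedding $H^r\subset L^\infty$, and Young's inequality for the mixed terms, supplies exactly the justification the paper's ``easily get'' glosses over, so your route is the more careful one; the paper's route is shorter because the algebra property already encapsulates $\||u|^{2\sigma}\|_r\lesssim\|u\|_r^{2\sigma}$ without re-deriving it from Moser. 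Your closing observation that integrality of $\sigma$ is what makes both telescoping and the algebra argument available, with non-integer $\sigma$ requiring a different mechanism, matches Remark \ref{non-intero}.
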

\begin{proof}
\begin{itemize}
\item [(i)] We proceed by an induction argument on $\sigma$.
For $\sigma=1$, using twice the Moser inequality we estimate
\begin{align*}
\||u|^2 u\|_{r} 
&\lesssim_{r,d}\|u\|^2_{L^\infty}\|u\|_{r} + \||u|^2\|_{r}\|u\|_{L^\infty}
\\
&\lesssim_{r,d} \|u\|^2_{L^\infty}\|u\|_{r} + \left( \|u\|_{L^\infty}\|\bar u\|_{r}+ \|\bar u\|_{L^\infty}\|u\|_{r}\right)\|u\|_{L^\infty}
\\&
\lesssim_{r,d} \|u\|^2_{L^\infty}  \|u\|_{r}.
\end{align*}
Suppose  for some $\sigma\ge 1$ it holds 
\begin{equation}
\label{ind}
\||u|^{2\sigma}u\|_{r} \lesssim_{r,d}\|u\|^{2\sigma}_{L^\infty}\|u\|_{r};
\end{equation}
let us prove the same estimate for $\sigma +1$. 
Using repeatedly the Moser inequality and the inductive hypothesis \eqref{ind}, we get
\begin{align*}
\||u|^{2(\sigma+1)}u\|_{r} 
&\lesssim_{r,d}\| |u|^{2\sigma} u\|_{L^\infty} \||u|^2\|_{r} + \||u|^{2\sigma }u\|_{r}\||u|^2\|_{L^\infty}
\\
&\le \|u\|_{L^\infty}^{2\sigma +1}2\|u\|_{L^\infty} \|u\|_{r}+ \|u\|^{2\sigma}_{L^\infty} \|u\|_{r} \|u\|^2_{L^\infty}
\\
& \lesssim_{r,d}\|u\|^{2(\sigma +1)}_{L^\infty}\|u\|_{r}.
\end{align*}
\item [(ii)] 
We exploit the estimate
 \begin{equation}
\label{stima_F_L_infty}
\left| |a|^{2\sigma}a-|b|^{2\sigma}b\right |\lesssim_\sigma (|a|^{2\sigma}+|b|^{2\sigma})|a-b|,
\end{equation}
valid for any $a,b\in \mathbb C$. Since $H^r$ is a multiplicative algebra for $r>\frac d2$ we easily get
\begin{align*}
\|F(u)-F(v)\|_{r} 
&\lesssim_{r, d,\sigma} \left( \| |u|^{2\sigma} \|_{r} + \| |v|^{2\sigma} \|_{r} \right) \|u-v\|_{r}
\\
&\lesssim_{r, d,\sigma}\left( \| u \|_{r}^{2\sigma} + \| v \|_{r}^{2\sigma} \right) \|u-v\|_{r}.
\end{align*}
\end{itemize}
\end{proof}

From \eqref{F_infty} we infer the existence of a positive constant $K=K(s, \sigma,d)$ such that 
\begin{equation}\label{stimaF}
\|F(u)\|_{s}\le K  \|u\|^{2\sigma}_{L^\infty}\|u\|_{s}, \qquad \forall \ u \in H^s\cap L^\infty, s>0.
\end{equation}

\begin{remark}\label{non-intero}
We can consider $\sigma>0$ not integer but in this case the results are less general than in Lemma \ref{Lemma_F}.
Indeed, the mapping $F:\mathbb C\to\mathbb C$,  defined as $F(u)=|u|^{2\sigma}u$, has continuous derivatives of all orders when 
$\sigma$ is an integer number, and till the order $1+\lfloor 2\sigma\rfloor$ when $\sigma$ is not integer.
Following   Lemma 4.10.2 of \cite{Cazenave}  when $\sigma\notin \mathbb N$
 we can deal with the $H^r$-regularity  of $F(u)$ for  $\frac d2<r<1+\lfloor 2\sigma\rfloor$ and $r$ integer. 
 \end{remark}

\subsection{The stochastic forcing term}
\label{sec:noise}

To prove the existence of a unique strong (in the probabilistic sense) solution to \eqref{NLS} we work under the following assumptions on the noise. 

We say  that  a filtered probability space $\bigl(\Omega, \F, \Filtration, \Prob \bigr)$   satisfies the standard conditions when 
 the filtration $\Filtration=\bigl(\mathcal{F}_t\bigr)_{t\ge 0}$ is  right-continuous 
 and  all $\Prob$-negligible  sets of $\F$ are elements of $\mathcal{F}_0$.
\\
Fix $s' $ and $s$ as in \eqref{STAR}.

\begin{enumerate}[label = \textbf{(H\arabic*)}, ref = \textbf{(H\arabic*)}]
\item\label{H1} 
 $W$ is a real-valued one-dimensional Brownian motion on a filtered probability space $(\Omega, \mathcal{F}, \mathbb{F}:=\{\mathcal{F}_t\}_{t \ge 0}, \mathbb{P})$ satisfying the standard assumptions.
\item\label{H2}
The diffusion coefficient $\phi$ is such that  
\begin{itemize}
\item [i)]
$\phi: H^{s} \rightarrow H^{s}$ is bounded on balls, 
\item [ii)]$\phi: H^{s'} \rightarrow H^{-s-1}$ is  continuous and bounded on balls; 
\end{itemize}
\item \label{H3} the projected coefficient $P_n\phi:H_n \rightarrow H_n$ is locally Lipschitz continuous for any $n \in \mathbb{N}$;
%
\item \label{H4} there exists a measurable function $\psi:\mathbb{R}^+\times \mathbb{R}^+\rightarrow \mathbb{R}^+$ which is locally bounded (i.e. bounded on bounded sets) and such that
\begin{equation*}
\|\phi(u)-\phi(v)\|_{s} \le \psi(\|u\|_{s}, \|v\|_{s})\|u-v\|_{s}
\qquad \forall \ u,v\in H^s;
\end{equation*}
\item\label{H5}
there exist $r>1$  and $B\in\mathbb R$ such that
\begin{equation}  
           \label{stima-K-phi}
 | \alpha | K \|u\|_{L^\infty}^{2\sigma} 
 +
 \frac 12 \frac {\| \phi(u)\|_{s}^2}  {\|u\|^2_{s} }
 - \frac{ [\Re \big(u, \phi (u)\big)_{s}]^2 } {\|u\|_{s}^4}\le B
 \qquad \forall \ u \in \mathbb B^c_{r,H^s}
\end{equation}
where $K$ is the constant appearing in \eqref{stimaF}.
\end{enumerate}

\begin{remark}
\label{loc_Lip}
Working under assumptions \ref{H1}, \ref{H2}, \ref{H3}
and \ref{H5} is enough to prove the existence of  global-in-time martingale solutions.
Assumption \ref{H4} is only needed to prove pathwise uniqueness. 

Notice that Assumption \ref{H4} implies Assumption \ref{H3} due to the equivalence of norms in finite dimensional spaces. 
We explicitly mention \ref{H3} to make clear in what follows what assumptions are needed to infer the existence of  global-in-time 
martingale solutions and what are needed to infer the pathwise uniqueness.
\end{remark}

\begin{remark}
    Assumptions \ref{H5} could be weaken: see Remark \ref{weaker_H3}. The auxiliary space $H^{s'}$, that appears in Assumption \ref{H2}(ii), plays a role in the construction of martingale solutions, see Remark \ref{Hs'_rem}.
\end{remark}

To prove the existence of invariant measures $\mu$ for \eqref{NLS} we need to strengthen Assumption \ref{H5} as follows.

\begin{enumerate}[label = \textbf{(H5')}, ref = \textbf{(H5')}]
\item \label{H5bis} 
There exist  $p\in (0,1)$, $r>1$  and $B<0$ such that
\begin{equation}\label{seconda-ipotesi-phi}
 | \alpha | K   \|u\|_{L^\infty}^{2\sigma}+\frac 12 \frac{\| \phi(u)\|_{s}^2}{ \|u\|^2_s}
 -\frac{2-p}2\frac{ [\Re \big(u, \phi (u)\big)_{s}]^2 } {\|u\|_{s}^4}
 \le B \qquad \forall \ u \in \mathbb B^c_{r,H^s}.
\end{equation}
\end{enumerate}

To prove that $\mu=\delta_0$ is the unique invariant measure and the zero solution is asymptotically stable, we will strengthen Assumption \ref{H5} as follows.

\begin{enumerate}[label = \textbf{(H5'')}, ref = \textbf{(H5'')}]
\item \label{H5bisbis} 
There exists $f:H^s \rightarrow \mathbb{C}$ such that $\phi(u)=f(u)u$.
Moreover, there exist  $p\in (0,1)$ and $B<0$ such that
\begin{equation}\label{terza-ipotesi-phi}
|\alpha|  K    \|u\|_{L^\infty}^{2\sigma} + \frac 12 |f(u)|^2 -\frac{2-p}{2} [\Re f(u)]^2  \le B, \quad \forall\  u \in H^s.
\end{equation}
\end{enumerate}
Notice that \ref{H5bisbis} is stronger that  \ref{H5bis}, which is stronger than \ref{H5}. 
Anyway, we presented all these three assumptions, because we gradually
prove stronger  and stronger results: no blow-up for solutions under \ref{H5}, existence of invariant measures under \ref{H5bis}, and asymptotic stability of the zero solution as well as the uniqueness of the invariant measure $\mu=\delta_0$ under \ref{H5bisbis}. 

Finally, rewriting \ref{H5bisbis} as
\[
|\alpha|  K    \|u\|_{L^\infty}^{2\sigma} + \frac 12 [\Im f(u)]^2 \le\frac{1-p}{2} [\Re f(u)]^2  + B, \quad \forall\  u \in H^s
\]
we realize that  a key role is played by the real part of $f$, which dominates the intensity of its imaginary part and of the nonlinear term.

\subsection{Examples of noise forcing term}
\label{example_noise_sec}

Let us provide  examples of function  $\phi$ appearing in the noise term.

We introduce the function $h: \mathbb{R}^+\rightarrow \mathbb{C}$ as 
\begin{equation*}
    h(x)=a (1+ x)^{b}+\im  c (1+ x)^d, 
\end{equation*}
with $a \ne 0$, $c \in \mathbb{R}$ and $b, d>1$.
We have that 
\begin{equation}
    \label{ex_0}
x \mapsto |h(x)|\quad \text{is increasing}. 
\end{equation}
Moreover, $h$ is continuously differentiable with 
\begin{equation}
    \label{ex_1}
x \mapsto |h'(x)|\quad \text{ increasing}.  
\end{equation}
As a consequence of the Mean Value Theorem we obtain the estimate 
\begin{equation}
\label{ex_2}
|h(x)-h(y)| \le \left(|h'(x)|\vee |h'(y)|\right)|x-y|, \qquad x, y \in \mathbb{R}^+.    
\end{equation}
Let now $f:H^s \rightarrow \mathbb{C}$ be defined as 
\begin{equation} 
\label{f_es_1}
f(u)=h(\|u\|_{L^\infty});
\end{equation}
we consider a diffusion term $\phi$ of the following form
\begin{equation}
\label{ex_noise_phi}
\phi(u)= f(u) u.
\end{equation}
Let us verify that this  choice  satisfies Assumptions \ref{H2},  \ref{H4} and \ref{H5bisbis}.
These in turns imply \ref{H3}, \ref{H5} and \ref{H5bis}.

\begin{itemize}
\item [\ref{H2}] Using the Sobolev embeddings \eqref{Sobolev} it is easy to see that $\phi$ is bounded on balls as a map from $H^s$ into itself and from $H^{s'}$ into $H^{-s-1}$.
\\
Let us prove that  $\phi: H^{s'} \rightarrow H^{s' }$ is continuous; this implies that 
$\phi: H^{s'} \rightarrow H^{-s-1}$ is continuous as well. Let $u, v \in H^{s'}$.
Bearing in mind \eqref{ex_2} and \eqref{Sobolev} we estimate
\begin{align*}
\|\phi(u)-\phi(v)\|_{s'} 
&\le |f(u)-f(v)|\|u\|_{s'}+|f(v)|\|u-v\|_{s'} 
\\
&\le
\left[ |h'(\|u\|_{L^\infty})| \vee  |h'(\|v\|_{L^\infty})|\right] \left| \|u\|_{L^\infty}- \|v\|_{L^\infty}\right| \|u\|_{s'}+
|h(\|v\|_{L^\infty})| \|u-v\|_{s'}
\\
&\lesssim \|u-v\|_{s'} \left( \|u\|_{s'} [|h'(\|u\|_{L^\infty})| \vee  |h'(\|v\|_{L^\infty})| ]+|f(v)|\right)
\\
&\lesssim   \psi (\|u\|_{s'}, \|v\|_{s'})\|u-v\|_{s'},
\end{align*}
where $\psi:\mathbb{R}^+\times \mathbb{R}^+\rightarrow \mathbb{R}^+$ is a locally bounded map. In the last equality we used \eqref{ex_0}, \eqref{ex_1} and \eqref{Sobolev}. 
\item[\ref{H5bisbis}] Setting $\tilde B=-B>0$
estimate \eqref{terza-ipotesi-phi} becomes 
\begin{equation} 
|\alpha| K \|u\|_{L^\infty}^{2\sigma} 
 +
 \frac 12[\Im f(u)]^2
 \le 
 \frac{(1-p)}2[\Re f(u)]^2- \tilde B.
\end{equation}
Therefore, \eqref{terza-ipotesi-phi} is fulfilled when there exists a positive constant $\tilde B$ such that
\begin{equation} 
\label{K_2}
2|\alpha|K  \|u\|_{L^\infty}^{2\sigma} + c^2(1+  \|u\|_{L^\infty})^{2d} 
\le
(1-p) a^2  (1+\|u\|_{L^\infty})^{2b}- 2\tilde B
\end{equation}
for any $u \in H^s$.
We then choose the parameters $a,b,c,d$ as follows. 
\begin{itemize}
\item  
When 
$b=d=\sigma\ge 1$, for a fixed $p\in (0,1)$ we require $a$ and $c$ to fulfil 
\[
2|\alpha|K  + c^2 < (1-p)a^2.
\]
In this way 
$2\tilde B:=(1-p)a^2-2|\alpha|K  - c^2>0$.
\item
When $b\ge d>\sigma\ge 1$, we use
\[
\|u\|_{L^\infty}^{2\sigma}\le (1+\|u\|_{L^\infty})^{2d}
\le (1+\|u\|_{L^\infty})^{2b}
\]
and we can proceed in a similar way to the previous case.
\end{itemize}
\item [\ref{H4}] One argues as done for \ref{H2}.
\end{itemize}

Notice that \ref{H5} would require less restrictive assumptions on the parameters than \ref{H5bisbis}. 
Indeed, estimate \eqref{stima-K-phi} becomes 
\begin{equation} 
|\alpha| K \|u\|_{L^\infty}^{2\sigma} 
 +
 \frac 12[\Im f(u)]^2
 \le 
 \frac12[\Re f(u)]^2+ B.
\end{equation}
Therefore, \eqref{stima-K-phi} is fulfilled when there exists $B\in \mathbb R$ such that
\begin{equation} 
\label{K_new}
|\alpha| K  \|u\|_{L^\infty}^{2\sigma} + \frac {c^2}{2}(1+  \|u\|_{L^\infty})^{2d} 
\le
\frac{a^2}{2} (1+\|u\|_{L^\infty})^{2b}+B.
\end{equation}
We then choose the parameters as follows. 
\begin{itemize}
\item
When 
$b=d=\sigma\ge 1$,  we require   $2|\alpha|K  + c^2 \le  a^2$; then $B$ can be any non-negative constant.
 \item  
When $b=d>\sigma\ge 1$, we use Young inequality to get that for any $\epsilon>0$ there exists $C_\epsilon>0$ such that
\[
2|\alpha|K\|u\|_{L^\infty}^{2\sigma}\le \epsilon  \|u\|_{L^\infty}^{2b}+C_\epsilon\le
\epsilon (1+ \|u\|_{L^\infty})^{2b}+C_\epsilon.
\]
Therefore \eqref{K_new} holds when $c^2< a^2$ and $B$ is chosen large enough, i.e. $B\ge C_\epsilon$ when we choose $\epsilon\le a^2-c^2$ (notice that 
$C_\epsilon$ depends also on $\alpha, K, b,\sigma$).
\item 
When $b>d>\sigma\ge 1$, we use the Young inequality to bound also the power $2d$, i.e.
\[
2|\alpha| K\|u\|_{L^\infty}^{2\sigma} + c^2 (1+  \|u\|_{L^\infty})^{2d} \le
\epsilon (1+ \|u\|_{L^\infty})^{2b}+C_\epsilon.
\]
Therefore \eqref{K_new} holds for every $a\ne 0$ and $B$ chosen large enough 
as in the previous case.
%
\end{itemize}


\begin{remark}
    Another possible example is given by a diffusion term $\phi$ of the form \eqref{ex_noise_phi}, with $f$ as in \eqref{f_es_1} and 
    \begin{equation*}
h(x)=ax^b+\im cx^d,
    \end{equation*}
    for $a\ne 0$, $c \in \mathbb{R}$ and $b,d>1$.
\end{remark}

\begin{remark}
One could consider a slightly more general example for the noise: $\phi$ is of the form \eqref{ex_noise_phi} with $f:H^s \rightarrow \mathbb{C}$ given by
\begin{equation} 
f(u)=a (1+ \|u\|_{X_1})^{b}+\im  c(1+ \|u\|_{X_2})^{d}, 
\end{equation}
with $a \ne 0$, $c\in \mathbb{R}$ and $b, d>1$. 
\\
The spaces $X_1$ and $X_2$ are chosen in such a way that for some constants $K_0$, $K_1$ and $K_2$
\[\begin{split}
&\|u\|_{X_2}\le K_2 \|u\|_{X_1}\le  K_1  \|u\|_{s'} \qquad \forall u \in H^s 
\\
&\|u\|_{L^\infty}\le K_0 \|u\|_{X_1}\qquad \forall u \in H^s ,
\end{split}
\]
where, as usual, we consider $\frac d2<s'<s$. 
\\
To show that conditions \ref{H5}--$\ref{H5bisbis}$ are satisfied one imposes conditions on the parameters  $a,b,c,d$ that involve also the constants $K_0, K_1$ and $K_2$.
\end{remark}

\subsection{Statement of the main results}

The following hypotheses stand throughout the paper
\begin{equation}\label{sigma+n}
\sigma \in \mathbb N, \qquad  s>\frac d2.
\end{equation}

We summarize our main results as follows. 

\begin{theorem}
\label{mainTH}
Assume  \eqref{sigma+n} and \ref{H1}-\ref{H5}. Then, for any initial datum $u^0 \in H^s$ there exists a unique global-in-time strong solution to \eqref{NLS_abs} with $\mathbb{P}$-a.s. paths in $C([0,\infty);H^s)$.
\end{theorem}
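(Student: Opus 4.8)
The plan is to follow the standard scheme for SPDEs: construct finite-dimensional Galerkin approximations, derive uniform a priori estimates and tightness to produce a martingale solution, and then combine pathwise uniqueness with a Yamada--Watanabe argument to upgrade to a unique strong solution. For $n\in\mathbb N$ I project (the abstract form of) \eqref{NLS} onto $H_n$ by means of $P_n$, obtaining the finite-dimensional It\^o equation
\begin{equation*}
\mathrm{d}u_n + \bigl[\im \Delta u_n + \im\alpha\, P_n F(u_n)\bigr]\,\mathrm{d}t = P_n\phi(u_n)\,\mathrm{d}W, \qquad u_n(0)=P_n u^0 .
\end{equation*}
Since $F$ is smooth (as $\sigma\in\mathbb N$) and $P_n\phi$ is locally Lipschitz by \ref{H3}, this SDE has a unique local solution up to a possible explosion time $\tau_n$.

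The heart of the proof --- and the actual mechanism of regularization by noise --- is a uniform no-blow-up estimate produced by a logarithmic Lyapunov function. Applying It\^o's formula to $\|u_n\|_s^2=(u_n,u_n)_s$, the self-adjointness of $\Delta$ makes the dispersive term drop from the real part ($\Re(u_n,\im\Delta u_n)_s=0$), and since $P_n$ is the orthogonal projection in $H^s$, the drift of $\|u_n\|_s^2$ equals $2\Re(u_n,-\im\alpha P_nF(u_n))_s+\|P_n\phi(u_n)\|_s^2$ while its martingale part is $2\Re(u_n,P_n\phi(u_n))_s\,\mathrm{d}W$. Passing to $V(u_n):=\log(e+\|u_n\|_s^2)$, a regularization of $\log\|u_n\|_s$ that avoids the singularity at the origin, and bounding the nonlinear contribution via \eqref{stimaF} by $|\alpha|K\|u_n\|_{L^\infty}^{2\sigma}\|u_n\|_s^2$, the It\^o correction of the logarithm produces precisely the negative term $-[\Re(u_n,\phi(u_n))_s]^2/\|u_n\|_s^4$. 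Hence, for $\|u_n\|_s>r$ the drift of $V$ is controlled by a constant multiple of the left-hand side of \eqref{stima-K-phi} and is thus bounded above by a constant, thanks to \ref{H5}; for $\|u_n\|_s\le r$ it is bounded since $\phi$ and $F$ are bounded on balls. Therefore $\mathcal{L}V\le C$ globally, which shows $\tau_n=\infty$ almost surely and, setting $\tau_n^R:=\inf\{t:\|u_n(t)\|_s\ge R\}$, yields the crucial uniform tail bound
\begin{equation*}
\sup_n\mathbb{P}\bigl(\tau_n^R\le T\bigr)\le \frac{\log\bigl(e+\|u^0\|_s^2\bigr)+CT}{\log\bigl(e+R^2\bigr)}\xrightarrow[\ R\to\infty\ ]{}0 .
\end{equation*}

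Tightness then follows by working on the stopped intervals $[0,\tau_n^R]$, where $\|u_n\|_s\le R$: by \ref{H2} the stochastic integral is uniformly H\"older-continuous in $H^{-s-1}$ (via Burkholder--Davis--Gundy), and since $\Delta u_n$ and $F(u_n)$ stay bounded in $H^{-s-1}$ the drift integral is uniformly Lipschitz there. Combined with the uniform $L^\infty(0,T;H^s)$-bound and the compactness criteria of Appendix \ref{tight_sec_main}, this gives tightness of the laws of $u_n$ in $C([0,T];H^{-s-1})\cap C_w([0,T];H^s)$, the stopping time being removable precisely because of the tail bound above. By Prokhorov and a Skorokhod--Jakubowski representation I would obtain an almost surely convergent subsequence on a new probability space; the compact embedding $H^s\subset H^{s'}$ from \eqref{STAR} upgrades the convergence to strong convergence in $H^{s'}$, which is exactly what is needed to pass to the limit in the nonlinearity (continuity of $F$ on $H^{s'}$, since $s'>\frac d2$) and in the stochastic term (continuity of $\phi\colon H^{s'}\to H^{-s-1}$ from \ref{H2}(ii), cf. Remark \ref{Hs'_rem}). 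This produces a global martingale solution with paths in $C_w([0,\infty);H^s)$; strong continuity, i.e. paths in $C([0,\infty);H^s)$, is then recovered from the continuity of $t\mapsto\|u(t)\|_s$ (read off the It\^o formula above) together with weak continuity, since in the Hilbert space $H^s$ norm-continuity and weak continuity imply strong continuity.

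It remains to prove pathwise uniqueness, which is where \ref{H4} enters. For two solutions $u,v$ driven by the same $W$ with the same initial datum I would apply It\^o's formula to $\|u-v\|_s^2$: the dispersive term again vanishes, the nonlinear difference is controlled by the local Lipschitz estimate \eqref{F3} of Lemma \ref{Lemma_F} and the diffusion difference by \ref{H4}, giving
\begin{equation*}
\mathrm{d}\|u-v\|_s^2 \le C\bigl(\|u\|_s,\|v\|_s\bigr)\,\|u-v\|_s^2\,\mathrm{d}t + \mathrm{d}(\text{local martingale}),
\end{equation*}
so that a localization in $R$ together with Gronwall's lemma forces $u=v$. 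Existence of a martingale solution together with pathwise uniqueness then yields, by the Yamada--Watanabe theorem, the unique global strong solution asserted. I expect the main obstacle to be the a priori Lyapunov estimate itself: finding the logarithmic function whose It\^o correction converts the superlinear noise into a negative term dominating the supercritical nonlinear growth --- which is exactly the content of \ref{H5} --- while simultaneously controlling the behaviour near $\|u\|_s=0$, where the logarithm is singular.
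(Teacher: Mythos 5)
Your overall architecture is the same as the paper's (Galerkin scheme, Lyapunov-based non-explosion via Khasminskii's test, tightness plus Skorokhod--Jakubowski, pathwise uniqueness and Yamada--Watanabe), but your specific Lyapunov function opens a genuine gap. With $V(u)=\log(e+\|u\|_s^2)$, the It\^o drift for the Galerkin solution is
\begin{equation*}
\frac{2\,\Im\big(u_n,\alpha P_nF(u_n)\big)_s+\|P_n\phi(u_n)\|_s^2}{e+\|u_n\|_s^2}
-\frac{2\big[\Re\big(u_n,P_n\phi(u_n)\big)_s\big]^2}{\big(e+\|u_n\|_s^2\big)^2},
\end{equation*}
and contrary to your claim, the It\^o correction is \emph{not} $-[\Re(u,\phi(u))_s]^2/\|u\|_s^4$: the denominators carry $e+\|u\|_s^2$ instead of $\|u\|_s^2$. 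Comparing with twice the left-hand side of \eqref{stima-K-phi} leaves an additive error of order $e\,[\Re(u,\phi(u))_s]^2/\|u\|_s^6\le e\,\|\phi(u)\|_s^2/\|u\|_s^4$, and Assumption \ref{H5} gives no control on this quantity precisely because the noise is superlinear. Assumption \ref{H5} is an exact-coefficient cancellation (the $\tfrac12$ and the $-1$) with no slack, and the additive smoothing $e+\|u\|_s^2$ destroys it. Concretely, take $\phi(u)=f(u)u$ with $f=f_R+\im f_I$, $f_R(u)^2=\Lambda(1+\|u\|_{s'}^2)^m$ for large $\Lambda$ and $m>\sigma+1$, and $f_I(u)^2=f_R(u)^2-2|\alpha|K\|u\|_{L^\infty}^{2\sigma}+2B$: then the left-hand side of \eqref{stima-K-phi} equals $B$ identically (so \ref{H1}--\ref{H5} hold), yet along constant functions $u\equiv k$ your drift behaves like a positive multiple of $k^{2m-2}\to\infty$, while $V\sim\log k$, so neither $\mathscr{L}V\le C$ nor $\mathscr{L}V\le CV$ is available and the Khasminskii test fails. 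The paper's construction in \eqref{elle1} --- $\mathscr{V}(u)=l(\|u\|_s)$ with $l$ \emph{constant} on $[0,R)$ and \emph{exactly} $\log\rho$ on $(2R,\infty)$, with $R=r/2$ --- is designed exactly for this: it removes the singularity at the origin (which you noticed) while reproducing, for $\|u\|_s>2R$, the combination in \eqref{stima-K-phi} with the exact coefficients (which your additive regularization does not); the transition region $[R,2R]$ is then handled by boundedness on balls from \ref{H2}(i). Your proof is repaired simply by adopting this cutoff logarithm, after which your tail bound and everything downstream go through.

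Two further steps are glossed over, though both are fixable along the paper's lines. In the uniqueness argument you cannot apply the It\^o formula to $\|u-v\|_s^2$ directly, since the solutions are only $H^s$-valued while the drift takes values in weaker spaces; the paper justifies the identity via the Yosida-type regularization $R_\lambda=\lambda(\lambda I+A)^{-1}$ in Lemma \ref{tec_lem}. Likewise, your recovery of paths in $C([0,\infty);H^s)$ from ``continuity of $t\mapsto\|u(t)\|_s$ read off the It\^o formula'' runs into the same regularity obstruction for the limit process; the paper instead derives strong $H^s$-continuity from the mild formulation, using that the group $e^{-\im tA}$ is an isometry on $H^s$, together with \eqref{stimaF} and \ref{H2}(i) (Proposition \ref{reg_Str_est}).
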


 \begin{theorem}
\label{misura-invariante}
Assume  \eqref{sigma+n} and  \ref{H1}-\ref{H5bis}. 
Then there exists at least one invariant measure $\mu$ for equation \eqref{NLS_abs}, supported in $H^s$.
\end{theorem}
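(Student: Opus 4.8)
The plan is to obtain $\mu$ as a Krylov--Bogoliubov limit of time-averaged laws, with the dissipativity needed for tightness supplied by Assumption \ref{H5bis} through a power-type Lyapunov function. From Theorem \ref{mainTH} the equation generates a well-defined Markov transition semigroup $(P_t)_{t\ge 0}$ on $H^s$, and the continuous dependence of the solution on its initial datum --- a by-product of pathwise uniqueness under \ref{H4} --- gives the Feller property on $H^s$. The novelty compared with the no-blow-up argument is only in the choice of Lyapunov function: there a logarithmic functional suffices, here one needs a genuinely coercive one producing a strictly negative drift.

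The key a priori bound is produced as follows. Taking $V(u):=\|u\|_s^p$ with the same $p\in(0,1)$ as in \ref{H5bis}, I would apply It\^o's formula to $\|u\|_s^2$ and then to its $p/2$-power. Since $\im\Delta$ generates a unitary $C_0$-group on $H^s$ we have $\Re(u,-\im\Delta u)_s=0$, while the nonlinear term is controlled by \eqref{stimaF} through $2|\Re(u,-\im\alpha F(u))_s|\le 2|\alpha|K\|u\|_{L^\infty}^{2\sigma}\|u\|_s^2$. A direct computation then yields
\begin{equation*}
\mathcal{L}V(u)\le p\|u\|_s^p\left(|\alpha|K\|u\|_{L^\infty}^{2\sigma}+\frac12\frac{\|\phi(u)\|_s^2}{\|u\|_s^2}-\frac{2-p}2\frac{[\Re(u,\phi(u))_s]^2}{\|u\|_s^4}\right),
\end{equation*}
and the bracket is exactly the left-hand side of \eqref{seconda-ipotesi-phi}; this is precisely why the exponent of $V$ must match the parameter $p$ of \ref{H5bis}. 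Hence for $\|u\|_s>r$ one gets $\mathcal{L}V(u)\le pB\,V(u)$ with $pB<0$, so that $\mathcal{L}V\le -cV+C$ globally for suitable $c,C>0$. As this computation is rigorous only on the finite-dimensional Galerkin systems, where $V$ is smooth and the stochastic integral is a true martingale, I would first derive $\sup_{t\ge 0}\E\|u_n(t)\|_s^p\le \|u^0\|_s^p+C/c=:M$ uniformly in $n$, and then pass to the limit to obtain the same bound for $u$ (with a stopping-time/Fatou argument handling the local-martingale part and using non-explosion from Theorem \ref{mainTH}).

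With this bound I would set $\mu_T:=\frac1T\int_0^T\mathrm{Law}(u(t))\,\mathrm{d}t$. By Markov's inequality $\mu_T(\{\|u\|_s>R\})\le M/R^p$, and since the sublevel sets $\{\|\cdot\|_s\le R\}$ are compact in $H^{s'}$ (they are closed in $H^{s'}$ by lower semicontinuity of $\|\cdot\|_s$ and precompact by the compact embedding $H^s\hookrightarrow H^{s'}$), the family $\{\mu_T\}$ is tight on $H^{s'}$. Prokhorov's theorem produces a subsequence $\mu_{T_k}\rightharpoonup\mu$ in $\mathcal{P}(H^{s'})$. Lower semicontinuity of $\|\cdot\|_s$ and the uniform bound give $\int\|u\|_s^p\,\mathrm{d}\mu\le M$, so $\mu(H^s)=1$ and $\mu$ is supported in $H^s$. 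Invariance is then approached through the standard identity $\int P_r\varphi\,\mathrm{d}\mu_T-\int\varphi\,\mathrm{d}\mu_T=O(1/T)$.

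The hard part will be closing this last step, because tightness is obtained in the weaker space $H^{s'}$ whereas the semigroup and its Feller property live naturally on $H^s$: to pass to the limit in $\int P_r\varphi\,\mathrm{d}\mu_{T_k}$ one must know that $P_r\varphi$ behaves well under $H^{s'}$-convergence along the (uniformly $H^s$-bounded) approximating measures. I would resolve this by upgrading the continuous-dependence estimate to the $H^{s'}$-topology on $H^s$-bounded sets --- this is where \ref{H4} enters quantitatively --- thereby obtaining a Feller-type property adapted to the topology in which tightness holds, and hence the invariance of the limit $\mu$.
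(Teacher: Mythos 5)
Your Lyapunov computation is sound and matches the paper's: the bracket in your display for $\mathcal{L}V$ is exactly the left-hand side of \eqref{seconda-ipotesi-phi} (compare \eqref{crucial_est_2}), and the uniform Galerkin moment bound it produces is the correct a priori estimate (one small repair: $\|u\|_s^p$ with $p<1$ is not $C^2$ at the origin even in finite dimensions, so you must flatten the Lyapunov function near $0$ as in \eqref{elle2}, or regularize as in Lemma \ref{lambda}). The genuine gap is your final invariance step, and you have correctly identified where it is but your proposed fix does not work under the stated hypotheses. Since tightness of $\mu_T$ holds only in $H^{s'}$, you need $x\mapsto P_r\varphi(x)$ to be continuous along $H^{s'}$-convergent, $H^s$-bounded sequences, and you propose to get this by ``upgrading'' the continuous-dependence estimate to the $H^{s'}$-topology via \ref{H4}. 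But \ref{H4} controls $\|\phi(u)-\phi(v)\|_{s}$ by $\|u-v\|_{s}$ \emph{only in the $H^s$ norm}; in $H^{s'}$ the assumptions provide mere continuity of $\phi$ into $H^{-s-1}$ (\ref{H2}(ii)), with no modulus whatsoever. The nonlinearity is not the obstruction (Lemma \ref{Lemma_F}(ii) with $r=s'$ gives the needed Lipschitz bound); the diffusion coefficient is, and no Gronwall argument for $\|u(t;x)-u(t;y)\|_{s'}$ can be run. (In the worked example of Section \ref{example_noise_sec} such an $H^{s'}$-Lipschitz estimate happens to hold, but the theorem is stated under \ref{H1}--\ref{H5bis} only.) Moreover, even your opening claim that the Feller property on $H^s$ is ``a by-product of pathwise uniqueness under \ref{H4}'' is too quick: the Gronwall estimate in Proposition \ref{path_uniq_prop} is localized by the stopping times $\tau_M$, and converting it into continuity of $x\mapsto \mathbb{E}[\varphi(u(t;x))]$ requires uniform control of those stopping times over nearby initial data, which is nowhere established.

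The paper avoids both difficulties by applying Krylov--Bogoliubov \emph{only at the finite-dimensional Galerkin level}, where the Feller property is immediate from the locally Lipschitz coefficients (Proposition \ref{misura-invariante-per-Galerkin}); your drift inequality is used there to produce invariant measures $\mu_n$ and stationary solutions $u_n^{st}$ with the uniform bound \eqref{stima-media-infty}. One then passes to the limit in \emph{path space}: the estimates \eqref{est_1_bis}--\eqref{est_2_bis} give tightness in $Z_T$, and the Jakubowski--Skorokhod argument of Section \ref{suc_sec_con} yields a \emph{stationary martingale solution} of \eqref{NLS_abs} (the Flandoli--Gatarek method, Proposition \ref{pro_sta_sol}); stationarity survives the limit because it only requires convergence in $C_w([0,\infty);H^s)$, not any continuity of the limit semigroup. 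Finally, since strong existence plus pathwise uniqueness make the transition semigroup $(P_t)$ well defined and Markov, the time-$t$ law of the stationary solution is an invariant measure --- no Feller property of $(P_t)$ is ever invoked. Your proposal is repaired by substituting this ``limit of stationary Galerkin solutions'' step for the infinite-dimensional Krylov--Bogoliubov step; everything before it (the choice of $V$, the matching of the exponent $p$ with \ref{H5bis}, the moment bound) can be kept as is.
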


 \begin{theorem}
\label{unic-misura-invariante}
Assume  \eqref{sigma+n} and \ref{H1}-\ref{H5bisbis}. Then $\mu=\delta_0$ is the unique invariant measure for equation \eqref{NLS_abs} and the zero solution is exponentially stable.
\end{theorem}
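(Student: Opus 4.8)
The plan is to deduce both claims from a single It\^o estimate for the $H^s$-norm showing that every solution decays to zero exponentially fast, and then to run a soft ergodic argument. That $\delta_0$ is invariant is immediate: since $F(0)=0$ and $\phi(0)=f(0)\cdot 0=0$, the constant path $u\equiv0$ solves \eqref{NLS} started at $u^0=0$, so $\delta_0$ is stationary. The whole content is therefore the exponential stability, from which uniqueness will follow.

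Rewrite \eqref{NLS} as $\df u=[\im Au-\im\alpha F(u)]\,\df t+f(u)u\,\df W$ with $A=-\Delta$, and apply It\^o's formula to $X_t:=\|u(t)\|_s^2$. Two structural facts drive the computation. First, $\im A$ is skew-adjoint on $H^s$ (equivalently $(e^{\im tA})_{t}$ is a unitary group on $H^s$), so $\Re(u,\im Au)_s=0$ and the dispersion contributes nothing to the drift. Second, $\phi(u)=f(u)u$ with $f(u)$ scalar, so $(u,\phi(u))_s=\overline{f(u)}\,\|u\|_s^2$ and $\|\phi(u)\|_s^2=|f(u)|^2\|u\|_s^2$. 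Bounding the single remaining drift term by \eqref{stimaF}, namely $|2\Re(u,\im\alpha F(u))_s|\le 2|\alpha|K\|u\|_{L^\infty}^{2\sigma}\|u\|_s^2$, we arrive at
\[
\df X_t\le\big(2|\alpha|K\|u\|_{L^\infty}^{2\sigma}+|f(u)|^2\big)X_t\,\df t+2\Re f(u)\,X_t\,\df W.
\]
Applying It\^o to the concave map $X\mapsto X^{p/2}$, with the exponent $p\in(0,1)$ of \ref{H5bisbis}, the second-order term produces the coefficient $2\big(\tfrac p2-1\big)[\Re f(u)]^2=-(2-p)[\Re f(u)]^2$, which is exactly the negative term in \eqref{terza-ipotesi-phi}; doubling \eqref{terza-ipotesi-phi} then bounds the total drift of $\|u\|_s^{p}$ by $pB\,\|u\|_s^{p}$. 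Taking expectations (after the standard localisation, and carrying the computation on the Galerkin approximations where $X\mapsto X^{p/2}$ is applied to the strictly positive finite-dimensional norm) and using Gr\"onwall yields the $p$-th moment exponential stability
\[
\E\|u(t)\|_s^{p}\le e^{pBt}\,\|u^0\|_s^{p}\xrightarrow[t\to\infty]{}0,\qquad pB<0 .
\]
The analogous computation with $X\mapsto\log X$ (legitimate since, for $u^0\neq0$, the multiplicative structure keeps $X_t>0$ for all finite $t$) gives $\df\log X_t\le(2B-p[\Re f(u)]^2)\,\df t+2\Re f(u)\,\df W$; writing $N_t:=\int_0^t2\Re f(u(r))\,\df W(r)$ and invoking the strong law for continuous local martingales ($N_t/\langle N\rangle_t\to0$ on $\{\langle N\rangle_\infty=\infty\}$, $N_t$ convergent otherwise) one obtains the almost sure exponential stability $\limsup_{t\to\infty}t^{-1}\log\|u(t)\|_s^2\le2B<0$.

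For uniqueness, let $\mu$ be any invariant measure; by Theorem \ref{mainTH} the solution map is globally defined on $\operatorname{supp}\mu\subset H^s$. The moment bound above and Chebyshev's inequality give $u(t;u^0)\to0$ in probability, for every $u^0\in H^s$. Hence, for any $\varphi\in C_b(H^s)$, $\E[\varphi(u(t;u^0))]\to\varphi(0)$ for each $u^0$, and invariance together with dominated convergence gives
\[
\int_{H^s}\varphi\,\df\mu=\int_{H^s}\E[\varphi(u(t;u^0))]\,\mu(\df u^0)\xrightarrow[t\to\infty]{}\varphi(0).
\]
Since $\varphi\in C_b(H^s)$ is arbitrary, $\mu=\delta_0$.

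The main obstacle is the rigorous justification of It\^o's formula for the singular Lyapunov functionals $\|u\|_s^{p}$ ($p<1$) and $\log\|u\|_s^2$: the solution only lives in $H^s$, so the unbounded term $Au$ must be handled through the skew-adjointness pairing rather than literally; both functionals are non-smooth at the origin; and the logarithm additionally requires $\|u(t)\|_s>0$ for all finite times. I would settle all three points at the level of the finite-dimensional Galerkin scheme of Section \ref{S-wellposedness}, where It\^o's formula is classical, the identity $\Re(u_n,\im Au_n)_s=0$ is exact, and the projected noise $P_n(f(u_n)u_n)=f(u_n)u_n$ retains the multiplicative structure that keeps the norm positive, and then pass to the limit using the a priori estimates already obtained there (the moment bound survives by Fatou).
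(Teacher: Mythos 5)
Your core computation coincides with the paper's: applying It\^o to $X_t=\|u(t)\|_s^2$, killing the dispersion via $\Re(u,\im Au)_s=0$, exploiting the scalar multiplicative structure $\phi(u)=f(u)u$ to get $(u,\phi(u))_s=\overline{f(u)}\|u\|_s^2$ and $\|\phi(u)\|_s^2=|f(u)|^2\|u\|_s^2$, bounding the nonlinearity by \eqref{stimaF}, and letting the concavity correction of $X\mapsto X^{p/2}$ produce the factor $-(2-p)[\Re f(u)]^2$ is exactly how \eqref{terza-ipotesi-phi} enters in Lemma \ref{lambda} of the paper (there the non-smoothness at the origin is handled by the surrogate $(\|x\|_s^2+\varepsilon)^{p/2}$ with $\varepsilon\to0$, rather than by Galerkin). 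Your moment bound $\E\|u(t)\|_s^p\le e^{pBt}\|u^0\|_s^p$ is correct (and carries through the Galerkin limit by lower semicontinuity of the norm plus Fatou, transferring to the strong solution by uniqueness in law; note also, as you observe, that $P_n(f(u_n)u_n)=f(u_n)u_n$, so the structure survives projection), and your uniqueness argument is sound and in fact marginally leaner than the paper's: you only need convergence in probability of $u(t;u^0)$ to $0$ (Chebyshev from the moment bound, then continuous mapping and dominated convergence against any invariant $\mu$), whereas the paper first derives almost sure convergence from its part (ii).

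The genuine gap is in your justification of the \emph{almost sure} exponential stability. You run the $\log X$ computation and the strong law for continuous local martingales formally on $u$, but defer all rigor to the finite-dimensional Galerkin scheme, "then pass to the limit". That transfer fails for the pathwise statement: the Galerkin solutions converge only in law (via the Jakubowski--Skorokhod construction of Section \ref{suc_sec_con}), and the a.s. bound $\limsup_{t\to\infty}t^{-1}\log\|u_n(t)\|_s^2\le 2B$ holds for each $n$ with random times and constants that are in no way uniform in $n$; such $\omega$-wise asymptotics do not pass through a limit in law, nor through the pointwise bound $\|u(t)\|_s\le\liminf_n\|u_n(t)\|_s$ — only the moment estimate "survives by Fatou", which is not enough for the pathwise claim as you have set it up. The paper avoids this by proving the It\^o estimate \emph{directly on the limit solution}: Lemma \ref{lambda} shows $e^{\lambda t}\|u(t)\|_s^p$ is a nonnegative supermartingale for $0<\lambda<-pB$, and a.s. exponential stability then follows from Doob's supermartingale maximal inequality on the intervals $[k,k+1]$ plus Borel--Cantelli (proof of Theorem \ref{uniq_inv_thm}(ii)). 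Your $\log$/LLN route is salvageable, but it must likewise be run on $u$ itself: first establish the real-semimartingale decomposition of $\|u(t)\|_s^2$ at the SPDE level, e.g. by the Yosida-type regularization $R_\lambda=\lambda(\lambda I+A)^{-1}$ of Lemma \ref{tec_lem}, after which one-dimensional It\^o calculus applies to $\log(X_t+\varepsilon)$; this also repairs your positivity claim, which is asserted rather than proved — it follows because $X$ then solves $\df X_t=\beta_t X_t\,\df t+2\Re f(u(t))\,X_t\,\df W(t)$ with $|\beta_t|\le 2|\alpha|K\|u(t)\|_{L^\infty}^{2\sigma}+|f(u(t))|^2$ pathwise bounded on compacts (since $u\in C([0,T];H^s)$ and $f$ is bounded on balls), hence $X$ admits a Dol\'eans-Dade exponential representation that never vanishes in finite time.
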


For a more precise statement of the results concerning the ergodic properties to the solution process see Theorems \ref{misura-invariante} and \ref{uniq_inv_thm}.

\section{Existence and uniqueness of global strong solutions}
\label{S-wellposedness}

In this section we prove Theorem \ref{mainTH}, that is that equation \eqref{NLS} admits a unique strong (in the probabilistic sense) solution. 

We rewrite equation \eqref{NLS} in the following abstract form 
\begin{equation}
\label{NLS_abs}
\begin{cases}
{\rm d} u(t)+\im  \left[ -A u(t)+\alpha  F(u(t))  \right] \,{\rm d}t
= \phi(u(t))  \,{\rm d} W(t) , \qquad t>0
\\
u(0)=u^0.
\end{cases}
\end{equation}


We consider martingale solutions, i.e. weak solutions in the probabilistic sense, and strong solutions as well.
Here are the definitions.


\begin{definition}[martingale solution]\label{def-martingale solution}
Let $u^0 \in H^s$.
A \emph{martingale solution} of the equation $\eqref{NLS_abs}$ on the time interval $[0,\infty)$ 
with initial datum $u^0$ is a system
$
\bigl(\tilde{\Omega},\tilde{\F},\tilde{\mathbb P},\tilde{\Filtration},\widetilde{W},u\bigr)
$
 consisting of
	\begin{itemize}
		\item a filtered probability space $\bigl(\tilde{\Omega},\tilde{\F},\tilde{\Prob},\tilde{\Filtration}\bigr)$  satisfying the standard conditions; 
		\item  a real valued one dimensional Brownian motion $\widetilde{W}$  on  $\bigl(\tilde{\Omega},\tilde{\F},\tilde{\Prob},\tilde{\Filtration}\bigr);$
		\item an $H$-valued 
		 continuous and  $\tilde{\Filtration}$-adapted process  $u$ with
$\tilde{\Prob}$-almost all paths in $ C_w([0,\infty);H^s)$,
	such that for every $t\in [0,\infty)$
 the equality
	\begin{equation}\label{eqn-ItoFormSolution}
	u(t)=  u^0+ \im\int_0^t \left[ A u(s)-\alpha F(u(s))\right] \df s     
	+\int_0^t \phi(u(s))\, {\rm d} \widetilde{W}(s)
	\end{equation}
	holds $\tilde{\mathbb{P}}$-almost surely in $H^{-s-1}$.  
\end{itemize}
\end{definition}

\begin{definition}
Let $u^0 \in H^s$.
Assume that $
\bigl({\Omega},{\F},{\mathbb P},{W},{\Filtration}\bigr)
$
is a system consisting of 
\begin{itemize}
		\item a filtered probability space $\bigl({\Omega},{\F},{\Prob},{\Filtration}\bigr)$  satisfying the standard conditions;
		\item  a real valued one dimensional Brownian motion ${W}$  on  $\bigl({\Omega},{\F},{\Prob},{\Filtration}\bigr)$.
\end{itemize}
A \emph{strong solution} of the equation \eqref{NLS_abs}  on the time interval $[0,\infty)$ with the initial datum $u^0$ is a $H$-valued continuous and  $\Filtration$-adapted process  $u$ with
${\Prob}$-almost all paths in $ C_w([0,\infty);H^s)$,
	such that for every $t\in [0,\infty)$
 the equality
	\begin{equation}\label{eqn-ItoFormSolution}
	u(t)=  u^0+ \im\int_0^t \left[ A u(s)-\alpha F(u(s))\right] \df s     
	+\int_0^t \phi(u(s))\, {\rm d} W(s)
	\end{equation}
	holds ${\mathbb{P}}$-almost surely in in $H^{-s-1}$ 
\end{definition}


\begin{remark}
\label{rem_in_data}
    We will deal with deterministic intial data, the extension to random initial data is straightforward. 
\end{remark}

To prove Theorem \ref{mainTH} we proceed as follows. In subsection \ref{sect:Galerkin} we introduce the finite-dimensional Galerkin approximation of equation \eqref{NLS_abs}. In Propositions \ref{p_tight_1} and \ref{p_tight_2} we prove uniform a-priori bounds for the solution of the approximated problem. These bounds allow to infer the tightness of the laws defined by the solution of the Galerkin approximation (see Proposition \ref{prop_tight}).
In section \ref{suc_sec_con} we prove the convergence of
the Galerkin approximations to the martingale solution of problem \eqref{NLS_abs}. In subsection \ref{path_uniq_sec} we prove pathwise uniqueness of the solution, from which we also infer that the solution is strong in the probabilistic sense.

\subsection{The Galerkin approximation}
\label{sect:Galerkin}

We introduce a finite-dimensional approximation of equation \eqref{NLS_abs}.
To this end we introduce  
 the projector operator $P_n:H\to H_n=Span\{e_k: |k|\le n\}$ introduced in 
\eqref{P_n_def}.
For any $r\ge 0$ we have 
\begin{equation}
\label{bound_H^s_Pn}
(P_nu, v)_{r}=( u, P_nv)_{r}, \qquad  \|P_n u\|_{r}
\le \|u\|_{r}, \quad  \forall  u, v \in H^r
\end{equation}
and 
\begin{equation}\label{limit_H^s}
\lim_{n\to\infty} \|P_n-I\|_{\mathcal{L}(H^{r})}=0.
\end{equation}
By density, we can extend $P_n$ to an operator $P_n :H^{-r}\to H_n$ with 
\begin{equation}
\label{P_nU'}
\|P_n\|_{\mathcal{L}(H^{-r})}\le 1.
\end{equation} 

We consider the Faedo-Galerkin approximation in the space $H_n$, obtained by projecting the NLS equation \eqref{NLS_abs} onto the finite dimensional space $H_n$:
\begin{equation}
\label{Galerkin}
\begin{cases}
d u_n(t)+ \im \left[- A  u_n(t)+\alpha  P_nF(u_n(t)) \right] \,{\rm d}t 
= P_n \phi(u_n(t))  \,{\rm d} W(t)
\\
u_n(0)=P_n(u^0),
\end{cases}
\end{equation}

It is a classical result to show that the Faedo-Galerkin equation has a unique solution. This is a strong solution in the probabilistic sense. First we consider  local existence.

\begin{proposition}
\label{local_sol}
Assume  \eqref{sigma+n}, \ref{H1} and \ref{H3}. Then for any $n \in \mathbb{N}$ and 
 $u^0 \in H^{s}$   there exists a unique local solution $u_n$ of \eqref{Galerkin} with continuous paths in $H_n$ and maximal existence time $\tau_n$, which is a blow-up time in the sense that 
\begin{equation*}
\limsup_{t \rightarrow \tau_n(\omega)}\|u_n(t,\omega)\|_{H_n}=+ \infty
\end{equation*}
for $\mathbb{P}$-a.e. $\omega \in \Omega$ with $\tau_n(\omega)<\infty$.
\end{proposition}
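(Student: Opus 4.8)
The plan is to read \eqref{Galerkin} as a genuine stochastic differential equation on the finite-dimensional complex vector space $H_n$, which we identify with $\mathbb R^{2N_n}$ where $N_n:=\#\{k\in\mathbb Z^d:|k|\le n\}$, driven by the one-dimensional real Brownian motion $W$, and then to invoke the classical local existence-and-uniqueness theory for such equations with locally Lipschitz coefficients. Writing the drift as $b_n(v):=\im\left[Av-\alpha P_nF(v)\right]$ and the diffusion coefficient as $g_n(v):=P_n\phi(v)$, the system reads $\df u_n=b_n(u_n)\,\df t+g_n(u_n)\,\df W$ with $u_n(0)=P_n u^0$, an SDE whose coefficients map $H_n$ into $H_n$.

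First I would verify that $b_n$ and $g_n$ are locally Lipschitz from $H_n$ to $H_n$. The operator $A$ leaves $H_n$ invariant, being diagonal in the basis $\{e_k\}$ with eigenvalues $|k|^2\le n^2$, so $A|_{H_n}$ is a bounded linear operator and its contribution to $b_n$ is even globally Lipschitz. Since $\sigma\in\mathbb N$ by \eqref{sigma+n}, the map $F(v)=|v|^{2\sigma}v=(v\bar v)^\sigma v$ is a polynomial in the real and imaginary parts of the Fourier components of $v$, hence $C^\infty$ and in particular locally Lipschitz on $H_n$; composing with the linear projection $P_n$ preserves this. The local Lipschitz continuity of $g_n=P_n\phi|_{H_n}$ is precisely the content of Assumption \ref{H3}. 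Note that all norms on the finite-dimensional space $H_n$ are equivalent, so it is immaterial whether one measures in $\|\cdot\|_{H_n}$, $\|\cdot\|_0$ or $\|\cdot\|_s$.

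With locally Lipschitz coefficients available, I would run the standard localization argument. For each $R>0$ I replace $b_n,g_n$ by globally Lipschitz truncations $b_n^R,g_n^R$ that coincide with them on $\mathbb B_{R,H_n}$; the classical theory for globally Lipschitz SDEs (a Picard iteration/fixed-point argument in $\mathbb R^{2N_n}$) yields a unique global strong solution $u_n^R$ with continuous, $\mathbb F$-adapted paths. Setting $\tau_R:=\inf\{t\ge 0:\|u_n^R(t)\|_{H_n}\ge R\}$ and invoking uniqueness, one checks that $u_n^R$ and $u_n^{R'}$ agree on $[0,\tau_R\wedge\tau_{R'}]$ for $R\le R'$, so the $\tau_R$ are nondecreasing and the solutions are consistent. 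Defining $\tau_n:=\lim_{R\to\infty}\tau_R$ and $u_n:=u_n^R$ on $[0,\tau_R]$ produces the unique maximal local solution on $[0,\tau_n)$. The blow-up characterization is then automatic: on $\{\tau_n<\infty\}$, were $\limsup_{t\to\tau_n}\|u_n(t)\|_{H_n}$ finite, bounded by some $R_0$, then $\tau_R=\infty$ for all $R>R_0$ on that event, forcing $\tau_n=\infty$, a contradiction. There is no genuine obstacle here, since the coefficients are locally Lipschitz rather than merely continuous; the only places where the paper's hypotheses enter are the local Lipschitz property of $P_n\phi$, supplied by \ref{H3}, and the smoothness of $F$, supplied by $\sigma\in\mathbb N$.
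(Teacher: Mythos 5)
Your proposal is correct and follows essentially the same route as the paper: both read \eqref{Galerkin} as a finite-dimensional SDE on $H_n$ with locally Lipschitz coefficients (the diffusion via Assumption \ref{H3}, the drift via the boundedness of $A|_{H_n}$ and the local Lipschitz continuity of $P_nF$, which holds since $\sigma\in\mathbb{N}$) and then invoke the classical local existence, uniqueness and maximality theory for such equations. The only cosmetic differences are that you check the Lipschitz property of $P_nF$ through its polynomial structure in the Fourier coefficients, whereas the paper uses the estimate \eqref{F3} together with the embedding \eqref{Sobolev} and equivalence of norms on $H_n$, and that you spell out the standard truncation/localization and blow-up-time argument that the paper delegates to the cited classical theory.
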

\begin{proof}
The result is a consequence of the well-known theory for finite-dimensional stochastic differential equations with locally Lipschitz continuous coefficients.

 Let $n \in \mathbb{N}$, we are done if we prove that the functions 
 \begin{equation*}
 h_n(x):= -\im A x + \im P_n (F(x)), \qquad l_n(x):= P_n \phi(x), \quad x \in H_n,\end{equation*}
 are locally Lipschitz continuous in $H_n$. Let $x, y \in H_n$ with $\|x\|_{0} \le R$ and $\|y\|_{0} \le R$. Using \eqref{bound_H^s_Pn}, \eqref{F3}, \eqref{Sobolev} and the equivalence of norms in $H_n$, we estimate 
 \begin{align*}
 \|P_nF(x)-P_nF(y)\|_{0}& \le \|F(x)-F(y)\|_0
 \lesssim \left( \|x\|_{L^\infty}^{2\sigma}+\|y\|_{L^\infty}^{2\sigma}\right) \|x-y\|_0
 \\
 &\lesssim \left( \|x\|_{s}^{2\sigma}+\|y\|_{s}^{2\sigma}\right) \|x-y\|_0
 \lesssim_n \left( \|x\|_0^{2\sigma}+\|y\|_0^{2\sigma}\right) \|x-y\|_0 
 \le 2R^{2\sigma}\|x-y\|_0.
\end{align*}
Since $A_{|H_n}$ is a bounded operator, we get 
\begin{equation*}
\|h_n(x)-h_n(y)\|_0 \lesssim_{n,R}\|x-y\|_0.
\end{equation*}
From \ref{H3} we immediately see that $l_n$ is locally Lipschitz continuous  in $H_n$. 
This concludes the proof.
\end{proof}

Now we show that the local solution to \eqref{Galerkin} is actually a global solution. 

\begin{proposition}
\label{p_tight_1}
Assume  \eqref{sigma+n}, \ref{H1}, \ref{H2}(i), \ref{H3} and \ref{H5}. Then for any $n \in \mathbb{N}$ and  $u^0 \in H^{s}$   there exists 
a unique solution $u_n$ of \eqref{Galerkin} defined on the time interval $[0,+\infty)$ and with $\mathbb P$-a.e. path 
in $C([0,+\infty); H_n)$. 
 Moreover, for any $T$ and $\delta>0$, there exists $C_{\delta,T}>0$ such that 
\begin{equation}
\label{est_1}
\sup_{n \in \mathbb{N}}\mathbb{P} \left( \|u_n\|_{L^\infty(0,T;H^s)} \ge C_{\delta,T} \right) \le \delta.
\end{equation}
\end{proposition}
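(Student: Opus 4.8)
The plan is to obtain both the global existence and the uniform tail bound \eqref{est_1} from one ingredient: an It\^o estimate for a logarithmic Lyapunov functional whose drift is controlled exactly by Assumption \ref{H5}. Writing $X_n:=\|u_n\|_s^2$, the left-hand side of \eqref{stima-K-phi} is precisely an upper bound for the drift of $\tfrac12\log X_n$, so the functional is essentially forced to be $\log\|u\|_s^2$. Since this degenerates at the origin, I would work with $V(u)=\Psi(\|u\|_s^2)$, where $\Psi\in C^2([0,\infty))$ is increasing, bounded below by some $-D_0$, and satisfies $\Psi(x)=\log x$ for $x\ge1$ (recall $r>1$ in \ref{H5}, so the region $\{\|u_n\|_s>r\}$ lies inside $\{X_n>1\}$, where $\Psi=\log$).

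First I would compute $\mathrm dX_n$ by It\^o's formula applied to \eqref{Galerkin}. Three observations make this clean: since $A$ is self-adjoint on $H^s$, $(u_n,Au_n)_s\in\mathbb R$, so the unbounded term $\im Au_n$ contributes $2\Re(u_n,\im Au_n)_s=0$; since $P_n$ is the $H^s$-orthogonal projection onto $H_n$, one has $\|P_n\phi(u_n)\|_s\le\|\phi(u_n)\|_s$ and $(u_n,P_n\phi(u_n))_s=(u_n,\phi(u_n))_s$; and the nonlinearity is bounded via \eqref{stimaF} and Cauchy--Schwarz, $|2\Re(u_n,-\im\alpha P_nF(u_n))_s|\le 2|\alpha|K\|u_n\|_{L^\infty}^{2\sigma}X_n$. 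This yields
\[ \mathrm dX_n\le\big[2|\alpha|K\|u_n\|_{L^\infty}^{2\sigma}X_n+\|\phi(u_n)\|_s^2\big]\mathrm dt+2\Re(u_n,\phi(u_n))_s\,\mathrm dW, \]
the It\^o correction $\|P_n\phi(u_n)\|_s^2$ coming from the scalar $W$. Applying It\^o to $V(u_n)=\Psi(X_n)$, with $\Psi'=1/X_n$, $\Psi''=-1/X_n^2$ on $\{\|u_n\|_s>r\}$, the drift is bounded by $2|\alpha|K\|u_n\|_{L^\infty}^{2\sigma}+\|\phi(u_n)\|_s^2/X_n-2[\Re(u_n,\phi(u_n))_s]^2/X_n^2$, i.e. by twice the left-hand side of \eqref{stima-K-phi}, hence by $2B$. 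On the bounded set $\{\|u_n\|_s\le r\}$, \eqref{Sobolev} controls $\|u_n\|_{L^\infty}$, \ref{H2}(i) controls $\|\phi(u_n)\|_s$, and $\Psi',\Psi''$ are bounded, so the drift is $\le C_r$. Thus $\mathrm dV(u_n)\le C\,\mathrm dt+\mathrm dM_n$ for a constant $C\ge0$ and a local martingale $M_n$, all uniformly in $n$.

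Next I would set $W_n(t):=V(u_n(t))-Ct+CT+D_0$, which on $[0,T]$ is a nonnegative local supermartingale, hence a genuine supermartingale. Stopping at $\tau_R:=\inf\{t:\|u_n(t)\|_s\ge R\}$ and noting that $W_n\ge 2\log R+D_0$ on $\{\|u_n\|_s\ge R\ge1\}$, the maximal inequality for nonnegative supermartingales gives
\[ \mathbb P\Big(\sup_{t\le T}\|u_n(t)\|_s\ge R\Big)\le\frac{\Psi(\|u^0\|_s^2)+CT+D_0}{2\log R+D_0}, \]
where $\Psi(\|P_nu^0\|_s^2)\le\Psi(\|u^0\|_s^2)$ makes the bound independent of $n$. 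Letting $R\to\infty$ forces $\mathbb P(\tau_n\le T)=0$ through the blow-up alternative of Proposition \ref{local_sol} (all norms on the finite-dimensional $H_n$ are equivalent), which yields the global solution with continuous $H_n$-paths, uniqueness being inherited from Proposition \ref{local_sol}; and choosing $R=C_{\delta,T}$ large enough makes the right-hand side at most $\delta$, which is \eqref{est_1}.

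The crux, and the main obstacle, is the drift bound, because it rests on a cancellation rather than on separate estimates. For a genuinely superlinear $\phi$ both $\|\phi(u)\|_s^2/\|u\|_s^2$ and $[\Re(u,\phi(u))_s]^2/\|u\|_s^4$ diverge as $\|u\|_s\to\infty$, and only the specific combination in \eqref{stima-K-phi} stays bounded; the logarithm is forced precisely because its It\^o correction supplies the negative term with exactly the coefficient needed to absorb the positive one. This rigidity also explains why one cannot simply regularize by $\log(1+\|u\|_s^2)$: the shift weakens the negative It\^o term and the cancellation fails at large norms, so $\Psi$ must coincide with $\log$ at infinity and be repaired only near the origin (which is harmless, since blow-up and large excursions involve only large norms, and the small-norm drift is controlled by \ref{H2}(i)). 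A final, routine point is to check that every constant ($K$, $B$, $r$, $C$, $D_0$) is independent of $n$, which holds because $P_n$ is an $H^s$-contraction and never enters the thresholds.
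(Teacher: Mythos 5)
Your proof is correct and follows essentially the same route as the paper: a Lyapunov function equal to $\log\|u\|_{s}^{2}$ at infinity and smoothed near the origin, the cancellation $\Re\big(u_n,\im A u_n\big)_{s}=0$, the contraction and self-adjointness of $P_n$ in $H^s$ (so that $\|P_n\phi(u_n)\|_{s}\le\|\phi(u_n)\|_{s}$ and $\big(u_n,P_n\phi(u_n)\big)_{s}=\big(u_n,\phi(u_n)\big)_{s}$), the estimate \eqref{stimaF} for the nonlinearity, Assumption \ref{H5} to bound the drift uniformly in $n$ on the large-norm region, and \ref{H2}(i) with the Sobolev embedding on the ball. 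The only difference is presentational: you extract both non-explosion and \eqref{est_1} from a single maximal inequality for the nonnegative supermartingale $V(u_n(t))-Ct+CT+D_0$, whereas the paper first invokes Khasminskii's test and then runs a separate Markov-inequality step, both resting on the same bound $\mathbb{E}[\mathscr{V}(u_n(t\wedge\tau_{n,k}))]\le \mathscr{V}(u_n(0))+Ct$.
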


\begin{proof}
We follow the approach of \cite{BagMauXu2023}. We divide the proof in two parts: first we prove 
global existence and then the estimate \eqref{est_1}.

For any $n \in \mathbb{N}$ we endow the $H_n$ space with the $H^{s}$-scalar product.
We fix $n \in \mathbb{N}$ and take the unique maximal solution $(u_n, \tau_n)$ from Proposition \ref{local_sol}. We prove that the solution is global, that is $\tau_n=+ \infty$ $\mathbb{P}$-a.s., appealing to the Khasmiskii's test for non explosion, see  \cite[Theorem III.4.1]{Kah} (for the finite-dimensional case). 
The idea is as follows. We introduce a sequence $\{\tau_{n,k}\}_{k\in\mathbb{N}}$ of stopping times defined by
	\begin{align*}
	\tau_{n,k}:=\inf \left\{t\ge0: \|u_n(t)\|_s\ge k\right\},\qquad k\in\mathbb{N}.
	\end{align*}
In order to prove that $\tau_n=+ \infty$, $\mathbb{P}$-a.s., it is sufficient to find a Lyapunov function $\mathscr{V}:H^{s} \rightarrow \mathbb{R}$ satisfying
\begin{align}
 &\mathscr{V} \ge 0  \qquad \text{on} \quad H^{s}, \label{K1}\\
 &a_k:= \inf \bigl\{ \mathscr{V}(v): \|v\|_{s}\ge k \bigr\} \rightarrow \infty, \qquad \text{as} \quad k \rightarrow \infty, \label{K2}\\
 &\mathscr{V}(u_n(0))< \infty,  \label{K3}
\end{align}
such that
\begin{equation}
\label{K4}
\mathbb{E}[\mathscr{V}(u_n(t\wedge \tau_{n,k}))] \le \mathscr{V}(u_n(0))+ Ct,
\end{equation}
for a constant $C< \infty$ and all $t\in [0,T]$ and $k \in \mathbb{N}$.
Therefore,
\begin{equation*}
\mathbb{P}(\tau_{n,k}<t)
= \mathbb{E} \left(\mathbbm{1}_{\{\tau_{n,k}<t\}} \right) 
 \le \frac{1}{a_k}\mathbb{E}\left[\mathbbm{1}_{\{\tau_{n,k}<t\}}\mathscr{V}(u_n(t \wedge \tau_{n,k})) \right]
\le \frac{\mathscr{V}(u_n(0))+Ct}{a_k}.
\end{equation*}
Passing to the limit, we get
\begin{equation*}
\lim_{k \rightarrow \infty}\mathbb{P}(\tau_{n,k}<t) =0,
\end{equation*}
for every fixed $t\ge 0$. 
Therefore $\mathbb{P}(\tau_n<t)=\displaystyle\lim_{k \rightarrow \infty}\mathbb{P}(\tau_{n,k}<t)=0$ for every fixed $t \ge 0$, which means $\mathbb{P}(\tau_n=+\infty)=1$.

We consider $R>\frac 12 $, $a \in (0, \log(2R))$ and  a non-decreasing $C^2$-function 
$l:[0,+\infty)\to [a,+\infty)$,   such that
\begin{equation}
\label{elle1}
\begin{cases}
l(\rho)=a, & 0\le \rho<R\\
l(\rho)=\log_e \rho, &\rho>2R.
\end{cases}
\end{equation}
Then 
as Lyapunov function we consider 
\[
\mathscr{V}(u)=l(\|u\|_{s}).
\]
It is trivial to verify that $\mathscr{V}$ fulfills \eqref{K2} and \eqref{K3}.
In order to get \eqref{K4}, we apply  It\^o formula to $\mathscr{V}(u_n)$, up to the maximal existence time of the process $u_n$.  We obtain 
\begin{equation} \label{ITO_n}
{\rm d}\mathscr{V}(u_n(t))
=(\mathscr{L}_n\mathscr{V})(u_n(t)) \,{\rm d}t
    + \mathscr{V}^\prime(u_n(t))[P_n \phi(u_n(t))]\,{\rm d}W(t)  ,
\end{equation}
where $(\mathscr{L}_n\mathscr{V})(u_n(t))$  and $\mathscr{V}^\prime(u_n(t))$ vanish when 
$\|u_n(t)\|_{s}<R$; however, when $\|u_n(t)\|_{s}\ge R$ they are given by 
\begin{equation*}
 (\mathscr{L}_n\mathscr{V})(u_n)
 =
 \mathscr{V}^\prime(u_n)[\im A u_n - \im \alpha P_n(F(u_n))] + \frac 12 \mathscr{V}^{\prime \prime}(u_n)
[P_n \phi(u_n), P_n \phi(u_n)],
\end{equation*}
where 
\begin{equation} \label{v-primo}
\mathscr{V}^\prime(u_n)[h] = l^\prime(\|u_n\|_{s}) \frac{\Re\big( u_n, h\big)_{s}} {\|u_n\|_{s}}
\end{equation}
and 
\begin{equation*}
\mathscr{V}^{\prime \prime}(u_n)[h,k]=l^{\prime \prime}(\|u_n\|_{s}) \frac{\Re\big( u_n, h\big)_{s}
  \Re\big( u_n, k\big)_{s}} {\|u_n\|^2_{s}}
+ l^\prime (\|u_n\|_{s}) \left(  \frac{\Re\big( h, k\big)_{s}} {\|u_n\|_{s}} -  \frac{\Re\big( u_n, h\big)_{s}
   \Re\big( u_n, k\big)_{s}} {\|u_n\|^3_{s}}\right).
\end{equation*}
We notice the following simplification:
\[\begin{split}
 \mathscr{V}^\prime(u_n)[\im A u_n - \im \alpha P_n(F(u_n))] 
   &=\frac {l^\prime(\|u_n\|_{s})}{\|u_n\|_{s} }  
 \Re\big( u_n, \im A u_n - \im  \alpha P_n(F(u_n))\big)_{s}
 \\
 &= \frac {l^\prime(\|u_n\|_{s})}{\|u_n\|_{s} }  
 \Re\big( u_n, -\im \alpha P_n(F(u_n))\big)_{s}
  \\
 &= \frac {l^\prime(\|u_n\|_{s})}{\|u_n\|_{s} }  
 \Im\big( u_n, \alpha P_n(F(u_n))\big)_{s},
\end{split}\]
since  the operators $A$ and $(I+A)^{\frac s2}$ commute and so 
\begin{equation}\label{prodotto-reale}
\Re\big( u_n,\im A u_n\big)_s=0. 
\end{equation}
Hence 
\begin{equation}\label{Ito-dt}
\begin{split}
 (\mathscr{L}_n\mathscr{V})(u_n)
 &=
\frac {l^\prime(\|u_n\|_{s})}{\|u_n\|_{s} }  
 \Im \big( u_n, \alpha  P_n(F(u_n))\big)_{s}
 +\frac 12 l^{\prime\prime}(\|u_n\|_{s}) \frac{ [\Re \big(u_n,P_n \phi (u_n)\big)_{s}]^2}{\|u_n\|_{s}^2}
      \\
  &\qquad
  +\frac 12      l^{\prime}(\|u_n\|_{s})  \left( \frac{\|P_n \phi(u_n)\|_{s}^2}{\|u_n\|_{s}}
                          - \frac{ [\Re \big(u_n,P_n \phi (u_n)\big)_{s}]^2 } {\|u_n\|_{s}^3}\right) 
\\
&\le 
  |\alpha|  K   l^\prime(\|u_n\|_{s})    \|u_n\|_{L^\infty}^{2\sigma}   \|u_n\|_{s}   +\frac 12 l^{\prime\prime}(\|u_n\|_{s}) \frac{ [\Re \big(u_n,P_n \phi (u_n)\big)_{s}]^2}{\|u_n\|_{s}^2}
      \\
  &\qquad
  +\frac 12      l^{\prime}(\|u_n\|_{s})  \left( \frac{\|P_n \phi(u_n)\|_{s}^2}{\|u_n\|_{s}}
                          - \frac{ [\Re \big(u_n,P_n \phi (u_n)\big)_{s}]^2 } {\|u_n\|_{s}^3}\right),           
\end{split}
\end{equation}
where $K$ is the constant in the estimate \eqref{stimaF}.

We will now show that 
\begin{equation}
\sup_{n \in \mathbb{N}}  \sup_{u\in H_n} (\mathscr{L}_n\mathscr{V})(u_n) < \infty.
\end{equation}
Since $ (\mathscr{L}_n \mathscr{V})(u_n)=0$ when the $H^s$-norm of $u_n$ is smaller that $R$, 
we have to consider two cases: when the $H^s$-norm of $u_n$ is in $[R,2R]$ or in $(2R,+\infty)$.
\\
$\bullet$ If $R\le \|u_n\|_{s}\le 2R$,  from \eqref{Ito-dt} we estimate
\[\begin{split}
(\mathscr{L}_n \mathscr{V})(u_n)
& \le
  |\alpha| K   l^\prime(\|u_n\|_{s})    \|u_n\|_{L^\infty}^{2\sigma}   \|u_n\|_{s}   
   +\frac 12 l^{\prime\prime}(\|u_n\|_{s})\|P_n \phi (u_n)\|_{s}^2
  +     l^{\prime}(\|u_n\|_{s}) \frac{\|P_n \phi(u_n)\|_{s}^2}{\|u_n\|_{s}} 
\\
&\lesssim
    l^\prime(\|u_n\|_{s})    \|u_n\|_s^{2\sigma+1}
     +\frac 12 l^{\prime\prime}(\|u_n\|_{s})\| \phi (u_n)\|_{s}^2
  +     l^{\prime}(\|u_n\|_{s}) \frac{\|\phi(u_n)\|_{s}^2}{R}.
\end{split}\]
Since  $l^\prime(\|u_n\|_{s})$, $l^{\prime\prime}(\|u_n\|_{s})$ are  continuous, 
they are bounded when  $R \le \|u_n\|_{s}\le 2R$; 
also  $\|\phi(u_n)\|_{s}$ is bounded when  $R \le \|u_n\|_{s}\le 2R$  in virtue of Assumption \ref{H2}(i). Therefore
\[
\sup_{n \in \mathbb{N}}  \sup_{R\le \|u_n\|_{s}\le 2R } (\mathscr{L}_n\mathscr{V})(u_n) < \infty.
\]
\\
$\bullet$ If $ \|u_n\|_{s}> 2R$, 
then $l^\prime( \|u_n\|_{s})=\frac 1{ \|u_n\|_{s}}$ and 
$l^{\prime\prime}( \|u_n\|_{s})=-\frac 1{ \|u_n\|_{s}^2}$. 
Hence, from estimate \eqref{Ito-dt} we infer
\begin{align}
\label{crucial_es}
 (\mathscr{L}_n\mathscr{V})(u_n) 
 &\le
  |\alpha| K \|u_n\|_{L^\infty}^{2\sigma} 
 +
 \frac 12 \frac {\|P_n \phi(u_n)\|_{s}^2}  {\|u_n\|^2_{s}} 
 - \frac{ [\Re \big(u_n,P_n \phi (u_n)\big)_{s}]^2 } {\|u_n\|_{s}^4}.
\end{align}

Thanks to \eqref{stima-K-phi} in Assumption \ref{H5}, this quantity is finite if we choose $R=\frac r2$ ($r$ given in  \eqref{stima-K-phi}).
Indeed we use
$\|P_n \phi(u_n)\|_{s}\le \| \phi(u_n)\|_{s}$ 
and $ \big(u_n,P_n \phi (u_n)\big)_{s}= \big(u_n, \phi (u_n)\big)_{s}$,
so
\[
\sup_{n \in \mathbb{N}}  \sup_{\|u_n\|_{s}\ge 2R }(\mathscr{L}_n\mathscr{V})(u_n) < \infty.
\]

Combining all the above cases we conclude that $(\mathscr{L}_n\mathscr{V}(u_n))$ is  bounded in $H_n$, 
uniformly in $n \in \mathbb{N}$, that is  there must exist a positive constant $C$  - independent of $k$ and $n$ - 
such that 
\begin{equation}\label{cresce-con-C}
\mathscr{L}_n\mathscr{V} \le C \quad \text{ for any }n \in \mathbb{N}.
\end{equation} 
Hence
\begin{equation*}
\mathscr{V}(u_n(t\wedge \tau_{n,k})) \le \mathscr{V}(u_n(0))
+ Ct
   + \int_0^{t\wedge \tau_{n,k}}\mathscr{V'}(u_n(r))[ \phi(u_n(r))]\, {\rm d}W(r).
\end{equation*}
Taking the expected value on both sides of the above estimate we get \eqref{K4}, from which we infer that $\tau_n= + \infty$ $\mathbb{P}$-a.s., that is, the solution is defined at any time $t\ge 0$.

\bigskip
Let us now prove estimate \eqref{est_1}. Fix $n \in \mathbb{N}$ and define
  \[
 \tau_M^n:=\inf \{t \ge 0: \mathscr{V}(u_n(t)) \ge M\}
 \]
 for $M >0$, to be chosen later on.
 We have $\left\{\displaystyle\sup_{t \in [0,T]}\mathscr{V}(u_n(t)) \ge M \right\}=\{\tau^{n}_M \le T\}$ and
$\mathbb{P}(\mathscr{V}(u_n(\tau^{n}_M)\ge M)=1$, so 
$\left\{ \displaystyle\sup_{t \in [0,T]}\mathscr{V}(u_n(t)) \ge M \right\}
=\{\tau^{n}_M \le T \}\cap \{\mathscr{V}(u_n( \tau_M^n)) \ge M\}
\subseteq 
 \{ \mathscr{V}(u_n(T \wedge \tau_M^n)) \ge M \}$.
 
 From \eqref{ITO_n} and \eqref{cresce-con-C} we have 
\[
\mathbb{E}[\mathscr{V}(u_n(t\wedge \tau^n_M))] 
\le 
\mathscr{V}(u_n(0))+ Ct\le  \mathscr{V}(u(0))+ Ct
\]
By the Markov inequality we infer
\begin{align*}
\mathbb{P} \big( \sup_{t \in [0,T]}\mathscr{V}(u_n(t)) \ge M\big)
\le \mathbb{P} \big( \mathscr{V}(u_n(T \wedge \tau_M^n)) \ge M\big)
\le \frac{ \mathscr{V}(u(0))+ Ct}{M}.
\end{align*}
Hence, for any $\delta>0$  we choose  $M\ge \max(\log_e(2R), \frac{ \mathscr{V}(u(0))+ CT}{\delta})$;
therefore there exists $C_{\delta,T}$ such that
\begin{equation} \label{stima-1-x-tightness}
\sup_{n \in \mathbb{N}}\mathbb{P} \left(\|u_n\|_{L^\infty(0,T;H^s)} \ge C_{\delta,T} \right) \le \delta.
\end{equation}
\end{proof}

\begin{remark}
\label{weaker_H3}
From the proof we see that the same result can be obtained if in Assumption \ref{H5} we assume the bound \eqref{stima-K-phi}
only on the set  $ \mathbb B^c_{r,H^s} \cap \mathbb B^c_{\rho, L^\infty}$ for some  $\rho>0$ and we add the condition
\begin{equation}
\label{stima_palla_infty}
\exists L>0: \; 
\|\phi(u)\|_s \le L \|u\|_s, \quad \forall \ u \in \mathbb B^c_{r, H^s} \cap \mathbb B_{\rho, L^\infty}.
\end{equation}

Indeed, using $\|P_n \phi(u_n)\|_{s}\le \| \phi(u_n)\|_{s}$ 
and $| \big(u_n,P_n \phi (u_n)\big)_{s}|= |\big(u_n, \phi (u_n)\big)_{s}|\le \|u_n\|_s\|\phi(u_n)\|_s$, 
 starting from \eqref{crucial_es} 
 we estimate on $\mathbb B^c_{r, H^s} \cap \mathbb B_{\rho, L^\infty}$
\begin{align}
( \mathscr{L}_n\mathscr{V})(u_n) 
 &\le
  |\alpha| K \|u_n\|_{L^\infty}^{2\sigma} 
 +
 \frac 12 \frac {\|P_n \phi(u_n)\|_{s}^2}  {\|u_n\|^2_{s}} 
 - \frac{ [\Re \big(u_n,P_n \phi (u_n)\big)_{s}]^2 } {\|u_n\|_{s}^4}
 \\&\le
  |\alpha| K \rho^{2\sigma}+\frac 32 L^2 .
\end{align}

\end{remark}

The bound \eqref{stima-1-x-tightness} is one of the two estimates appearing in the tightness criterium 
given in Proposition \ref{prop_tight}. 
Now we look for the other estimate, i.e. we provide a uniform estimate in probability 
for $u_n$ in $C^{0, \beta}([0,T];H^{-s-1})$ with $0<\beta<\frac 12$.

\begin{proposition}
\label{p_tight_2}
Assume  \eqref{sigma+n}, \ref{H1}, \ref{H2}(i), \ref{H3} and \ref{H5}. Let $u_n$ be the unique solution to equation \eqref{Galerkin} given in Proposition \ref{p_tight_1}. Let $0 < \beta < \frac 12$. For any $T>0$ and $\delta>0$ there exists a positive constant $C_{\delta,T}$ such that 
\begin{equation*}
\sup_{n \in \mathbb{N}} \mathbb{P}\left( \|u_n\|_{C^{0, \beta}([0,T];H^{-s-1})})\ge C_{\delta,T} \right)\le \delta.
\end{equation*}
\end{proposition}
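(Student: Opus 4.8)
The plan is to write the Galerkin process as $u_n(t) = P_nu^0 + I_n(t) + M_n(t)$, with drift term $I_n(t) := \im\int_0^t [Au_n(r) - \alpha P_nF(u_n(r))]\,\mathrm{d}r$ and martingale term $M_n(t) := \int_0^t P_n\phi(u_n(r))\,\mathrm{d}W(r)$. Since $H^s \subset H^{-s-1}$ with $\|\cdot\|_{-s-1}\le\|\cdot\|_s$, the quantity $\|u_n\|_{L^\infty(0,T;H^{-s-1})}$ entering the norm \eqref{norm_C} is dominated by $\|u_n\|_{L^\infty(0,T;H^s)}$, which is already controlled in probability, uniformly in $n$, by Proposition \ref{p_tight_1}. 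Hence the task reduces to bounding, uniformly in $n$ and in probability, the H\"older seminorm $[u_n]_{C^{0,\beta}([0,T];H^{-s-1})}$, and since $P_nu^0$ is constant in time it suffices to treat $[I_n]_{C^{0,\beta}}$ and $[M_n]_{C^{0,\beta}}$ separately.

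The main obstacle is that Proposition \ref{p_tight_1} only yields bounds in probability, not moment bounds, so the Burkholder--Davis--Gundy and Kolmogorov machinery cannot be applied directly to $M_n$, as $\|\phi(u_n)\|_s$ need not have uniformly bounded moments. I circumvent this by a localization. For $M>0$ set $\tau_M^n := \inf\{t\ge 0 : \|u_n(t)\|_s \ge M\}$ and consider the stopped processes $u_n^M(\cdot) := u_n(\cdot\wedge\tau_M^n)$, with stopped drift $I_n^M$ and martingale $M_n^M$. Given $\delta>0$, by \eqref{est_1} applied with $\delta/2$ I first fix $M=M_\delta$ so that $\sup_n\mathbb{P}(\|u_n\|_{L^\infty(0,T;H^s)}\ge M)\le\delta/2$. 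On the complementary event the path of $u_n$ coincides with $u_n^M$ on $[0,T]$, so it is enough to produce uniform-in-$n$ moment bounds for the H\"older seminorms of the stopped processes and then invoke the Chebyshev inequality.

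For the stopped drift I estimate the integrand pathwise on $[0,\tau_M^n]$. Writing $A=-\Delta$ in Fourier series gives $\|Av\|_{-s-1}\le\|v\|_{1-s}\le\|v\|_s$ (using $s>1/2$), while by \eqref{P_nU'}, the embedding \eqref{Sobolev} and $\|F(v)\|_{-s-1}\le\|F(v)\|_{L^2}\le\|v\|_{L^\infty}^{2\sigma}\|v\|_{L^2}\lesssim\|v\|_s^{2\sigma+1}$ one gets $\|P_nF(v)\|_{-s-1}\lesssim\|v\|_s^{2\sigma+1}$. Hence on $[0,\tau_M^n]$ the integrand is bounded by a constant $C(M)$ independent of $n$, yielding the Lipschitz bound $\|I_n^M(t)-I_n^M(r)\|_{-s-1}\le C(M)|t-r|$ and therefore $[I_n^M]_{C^{0,\beta}([0,T];H^{-s-1})}\le C(M)T^{1-\beta}$ pathwise. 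For the stopped martingale I use \eqref{P_nU'} and Assumption \ref{H2}(i): on $[0,\tau_M^n]$ one has $\|P_n\phi(u_n)\|_{-s-1}\le\|\phi(u_n)\|_s\le C_\phi(M):=\sup_{\|v\|_s\le M}\|\phi(v)\|_s<\infty$, uniformly in $n$. The Burkholder--Davis--Gundy inequality in $H^{-s-1}$ then gives, for every integer $m\ge1$ and $0\le r<t\le T$, the increment bound $\mathbb{E}\|M_n^M(t)-M_n^M(r)\|_{-s-1}^{2m}\lesssim_m |t-r|^m\,C_\phi(M)^{2m}$. Choosing $m>1/(1-2\beta)$ and applying the Kolmogorov continuity theorem, this upgrades to $\mathbb{E}[M_n^M]_{C^{0,\beta}([0,T];H^{-s-1})}^{2m}\le C(m,\beta,T)\,C_\phi(M)^{2m}$, again uniformly in $n$.

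Combining the two pieces, and using $\sup_t\|u_n^M(t)\|_{-s-1}\le M$, yields a uniform-in-$n$ bound $\sup_n\mathbb{E}\|u_n^M\|_{C^{0,\beta}([0,T];H^{-s-1})}^{2m}\le\Lambda(M)<\infty$. The Chebyshev inequality then lets me pick $C_{\delta,T}$ so large that $\sup_n\mathbb{P}(\|u_n^M\|_{C^{0,\beta}}\ge C_{\delta,T})\le\delta/2$, and the inclusion $\{\|u_n\|_{C^{0,\beta}}\ge C_{\delta,T}\}\subseteq\{\|u_n\|_{L^\infty(0,T;H^s)}\ge M\}\cup\{\|u_n^M\|_{C^{0,\beta}}\ge C_{\delta,T}\}$ gives the claim with total probability at most $\delta$. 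The only genuinely delicate point is the localization described above; everything else is routine once the uniformity in $n$ of $C(M)$ and $C_\phi(M)$ is observed.
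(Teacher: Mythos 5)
Your proof is correct and takes essentially the same route as the paper's: localization via the stopping time at level $M$ in the $H^s$-norm (the paper's $\rho_M^n$, your $\tau_M^n$), the event inclusion reducing the claim to uniform moment bounds for the stopped process, bounds on drift increments using $\|Av\|_{-s-1}\lesssim\|v\|_s$ and $\|P_nF(v)\|_{-s-1}\lesssim\|v\|_s^{2\sigma+1}$, and Burkholder--Davis--Gundy plus \ref{H2}(i) for the stopped stochastic integral. The only cosmetic differences are that you handle the drift pathwise (it is Lipschitz in time on $[0,\tau_M^n]$, so trivially H\"older) and upgrade the martingale increment estimates via the quantitative Kolmogorov continuity theorem, whereas the paper bounds $p$-moments of increments of the whole stopped process and invokes the fractional Sobolev embedding $W^{\gamma,p}(0,T)\subset C^{0,\beta}([0,T])$ --- the same machinery in substance, since that embedding is a standard proof of quantitative Kolmogorov.
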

\begin{proof}
Let $\delta>0$. Estimate \eqref{est_1} and the continuous embedding $H^{s} \hookrightarrow H^{-s-1}$ yield the existence of a constant $\tilde C_{\delta,T}>0$ such that 
\begin{equation*}
\sup_{n \in \mathbb{N}}\mathbb{P} \left(\|u_n\|_{L^\infty(0,T;H^{-s-1})} \ge \tilde C_{\delta,T} \right) \le \delta.
\end{equation*}
Thus, bearing in mind \eqref{norm_C}, it is sufficient to prove a uniform bound in probability for the semi-norm $[u_n]_{C^{0, \beta}(H^{-s-1})}$, defined in \eqref{norm_C}.
Let $M>0$, we introduce the stopping time 
\begin{equation}
\label{rho}
\rho_M^n:=\inf\{t \ge 0: \|u_n(t)\|_{s} \ge M\},
\end{equation}
and the event 
\begin{equation*}
A_M^n:=\left\{\sup_{t \in [0,T]}\|u_n(t)\|_{s} \ge M\right\}.
\end{equation*}
We have that $A_M^n \supseteq \{\rho^n_M \le T\}$. In virtue of \eqref{est_1} it also holds $\sup_{n \in \mathbb{N}}\mathbb{P}(A_M^n)< \delta$, provided that $M$ is sufficiently large. Hence, for such an $M$, it holds
\begin{align*}
\mathbb{P}\left([u_n]_{C^{0, \beta}(H^{-s-1})}>a\right)
&\le \mathbb{P} \left((A_M^n)^c \cap [u_n]_{C^{0, \beta}(H^{-s-1})}>a\right) + \mathbb{P}(A_M^n)
\\
& \le \mathbb{P}\left([u_n(\cdot \wedge \rho^n_M)]_{C^{0, \beta}(H^{-s-1})}>a\right)+ \delta 
\le \frac{1}{a^p} \mathbb{E} \left[[u_n(\cdot \wedge \rho^n_M)]^p_{C^{0, \beta}(H^{-s-1})} \right] + \delta,
\end{align*}
for some positive $a$ and $p$. Thus, if we show that 
\begin{equation}
\label{star5}
\sup_{n \in \mathbb{N}}\mathbb{E} \left[[u_n(\cdot \wedge \rho^n_M)]^p_{C^{0, \beta}(H^{-s-1})} \right] <\infty,
\end{equation}
then, for suitable $a_\delta>0$, we can infer
\begin{equation*}
\mathbb{P}\left([u_n]_{C^{0, \beta}(H^{-s-1})}>a_\delta \right) \le 2 \delta,
\end{equation*}
which will conclude the proof.
Let us thus prove \eqref{star5}. For $0 \le t_1 < t_2 \le T$, we have 
\begin{align*}
\|u_n(t_2 \wedge \rho_M^n)-u_n(t_1 \wedge \rho_M^n)\|^p_{-s-1}
&\lesssim_p
\left(\int_{t_1}^{t_2}\left\Vert  \im A u_n(r \wedge \rho_M^n)\right\Vert_{-s-1}  {\rm d}r \right)^p
   + \left( \int_{t_1}^{t_2}\left\Vert  \im P_n(F( u_n(r \wedge \rho_M^n)))\right\Vert_{-s-1} {\rm d}r \right)^p
\\
& \qquad +\left\Vert \int_{t_1\wedge \rho_M^n}^{t_2\wedge \rho_M^n} \phi( u_n(r))\, {\rm d}W(r) \right\Vert^p_{-s-1}.
\end{align*}
Set $I_1:=\left(\int_{t_1}^{t_2}\left\Vert  \im A u_n(r \wedge \rho_M^n)\right\Vert_{-s-1}  {\rm d}r \right)^p$. We estimate 
\begin{align*}
I_1&\le |t_2-t_1|^p \sup_{r \in [0,T]}\left\Vert A u_n(r \wedge \rho_M^n)\right\Vert^p_{-s-1}
\lesssim  |t_2-t_1|^p \sup_{r \in [0,T]}\left\Vert u_n(r \wedge \rho_M^n)\right\Vert^p_{1-s}
\\
&\lesssim|t_2-t_1|^p \sup_{r \in [0,T]}\left\Vert u_n(r \wedge \rho_M^n)\right\Vert^p_{s},
\end{align*}
where the last inequality is a consequence of the continuous embedding $H^s\subset H^{1-s}$, since we assumed \eqref{STAR}.
Hence, \eqref{rho} yields the existence of $M_1=M_1(M)>0$ such that $I_1 \le M_1|t_2-t_1|^p$ for all $n \in \mathbb{N}$ and $\mathbb{P}$-a.s..

Set $I_2:=  \left( \int_{t_1}^{t_2}\left\Vert  \im P_n(F( u_n(r \wedge \rho_M^n)))\right\Vert_{-s-1} {\rm d}r \right)^p$. Exploiting the continuous embedding $H^s \hookrightarrow H^{-s-1}$, \eqref{bound_H^s_Pn}, \eqref{stimaF} and the continuous embedding $H^{s} \subset L^\infty$, we estimate 
\begin{align*}
I_2&\le |t_2-t_1|^p \sup_{r \in [0,T]}\left\Vert P_n(F(u_n(r \wedge \rho_M^n))\right\Vert^p_{-s-1}
\lesssim  |t_2-t_1|^p \sup_{r \in [0,T]}\left\Vert P_n(F(u_n(r \wedge \rho_M^n))\right\Vert^p_{s}
\\
&\lesssim  |t_2-t_1|^p \sup_{r \in [0,T]}\| u_n(r \wedge \rho_M^n)\|^{(2\sigma +1)p}_{s}.
\end{align*}
Hence, \eqref{rho} yields the existence of $M_2=M_2(\sigma, p,M)>0$ 
such that $I_2 \le M_2|t_2-t_1|^p$ for all $n \in \mathbb{N}$ and $\mathbb{P}$-a.s.

Set $I_3:=\left\Vert  \int_{t_1\wedge \rho_M^n}^{t_2\wedge \rho_M^n} \phi( u_n(r))\, {\rm d}W(r) \right\Vert^p_{-s-1}$. 
To estimate this term we introduce the process
\[
a_n(t):=\pmb{1}_{t_1\wedge \rho_M^n \le t \le t_2\wedge \rho_M^n}
\]
and we notice that 
\[
 \int_{t_1\wedge \rho_M^n}^{t_2\wedge \rho_M^n} \phi( u_n(r))\, {\rm d}W(r)= \int_{t_1}^{t_2} a_n(r)\phi( u_n(r))\, {\rm d}W(r).
\]
For $p>1$, the Burkholder-Davis-Gundy inequality, \eqref{P_nU'} and the embedding $H^s \hookrightarrow H^{-s-1}$ yield 
\begin{align*}
\mathbb{E}[I_3] 
&=\mathbb{E} \left[ \left\Vert \int_{t_1}^{t_2} a_n(r)\phi( u_n(r))\, {\rm d}W(r) \right\Vert^p_{-s-1}\right]
\lesssim_p \mathbb{E} \left[\left( \int_{t_1}^{t_2}\|a_n(r)\phi(u_n(r))\|^2_{-s-1}\, {\rm d}r \right)^{\frac p2}\right] 
\\
&\le \mathbb{E} \left[\left( \int_{t_1}^{t_2}\|\phi(u_n(r \wedge \rho_M^n))\|^2_{-s-1}\, {\rm d}r \right)^{\frac p2}\right]
\\
&\le |t_2-t_1|^{\frac p2} \mathbb{E} \left[\sup_{r \in [0,T]}\| \phi(u_n(r \wedge \rho_M^n))\|^p_{-s-1}\right]
\lesssim |t_2-t_1|^{\frac p2} \mathbb{E} \left[\sup_{r \in [0,T]}\| \phi(u_n(r \wedge \rho_M^n))\|^p_{s}\right].
\end{align*}
In virtue of  Assumption \ref{H2}, we infer the existence of $M_3=M_3(\sigma, \beta, p,M)>0$ such that $ \mathbb{E}\left[ I_3 \right]\le M_3|t_2-t_1|^{\frac p2}$ for all $n \in \mathbb{N}$.
\\
Putting together the above estimates, we thus infer
\begin{equation}
\label{star6}
\mathbb{E}\left[\|u_n(t_2 \wedge \rho_M^n)-u_n(t_1 \wedge \rho_M^n)\|^p_{-s-1}\right] \lesssim_{p,M,\sigma,\beta,T}|t_2-t_1|^{\frac p2}.
\end{equation}
At this point we recall that for $1<p< \infty$ and $\beta <\gamma-\frac1p$ we have the Sobolev embedding $W^{\gamma,p}(0,T) \subset C^{0, \beta}([0,T])$. In particular, see \cite[Theorem B.1.5]{DPZ96}, for a continuous function $\varphi:[0,T] \rightarrow H^{-s-1}$ and $p, \beta, \gamma$ as above, we have 
\begin{equation}
\label{sob}
[\varphi]^p_{C^{0, \beta}([0,T];H^{-s-1})} \lesssim [\varphi]^p_{W^{\gamma,p}(0,T;H^{-s-1})}:=\int_0^T \int_0^T \frac{\|\varphi(t_2)-\varphi(t_1)\|^p_{H^{-s-1}}}{|t_2-t_1|^{1+\gamma p}}\, {\rm d}t_1\, {\rm d}t_2.
\end{equation}
Thus, from \eqref{star6} and \eqref{sob} we infer
\begin{align*}
\mathbb{E} \left[[u_n(\cdot \wedge \rho^n_M)]^p_{C^{0, \beta}(H^{-s-1})} \right] 
&\lesssim \mathbb{E} \left[[u_n(\cdot \wedge \rho^n_M)]^p_{W^{\gamma,p}(H^{-s-1})} \right] 
=\int_0^T \int_0^T \frac{\mathbb{E}\left[\|u_n(t_2 \wedge \rho_M^n)-u_n(t_1 \wedge \rho_M^n)\|^p_{-s-1}\right] }{|t_2-t_1|^{1+\gamma p}}\, {\rm d}t_1\, {\rm d}t_2
\\
& \lesssim_{p,M, \sigma, \gamma, T}\int_0^T \int_0^T |t_2-t_1|^{\left( \frac 12-\gamma\right)p-1}\,  {\rm d}t_1\, {\rm d}t_2.
\end{align*}
The  double integral is finite when $ (\frac 12-\gamma)p-1>-1$. 
Hence the right-hand side of the above inequality is uniformly bounded w.r.t. $n \in \mathbb{N}$ when $\gamma<\frac 12$. Indeed, given   $\gamma<\frac 12$ there exist $\beta\in (0,\frac 12)$ and $p\in (1,\infty)$ such that $\beta < \gamma-\frac 1p$. This proves \eqref{star5} and concludes the proof.
\end{proof}

\subsection{Existence of a global martingale solution}
\label{suc_sec_con}

In this section we will construct a global-in-time martingale solution of equation \eqref{NLS_abs}  as a limit of the  Galerkin  sequence $u_n$ as $n \to +\infty$. For arbitrary  $T>0$ we will work on a finite time interval $[0,T]$, for arbitrary $T>0$.
Let us recall from Section \ref{tight_sec} the definition of the set
\[
Z_T:=C([0,T];H^{s'}) \cap C_w([0,T];H^s).
\]
Proposition \ref{p_tight_1} and \ref{p_tight_2} provide the a priori estimates on the Galerkin approximating sequence: its law is tight in $Z_T$ thanks to Proposition \ref{prop_tight}.
In metric spaces, one can apply Prokhorov Theorem and Skorohod Theorem to obtain convergence from tightness. 
Even if we consider a subsequence, we denote it again by $u_n$ for simplicity of notation.
Since the space $Z_T$ is a locally convex space, one uses the Jakubowski generalization to non-metric spaces (see  \cite{Jak86}, \cite{Jak98}). This result allows to infer the existence of a probability space $(\widetilde{\Omega}, \widetilde{F}, \widetilde{\mathbb{P}})$ and $Z_T \times C([0,T];\mathbb{R})$-valued random variables $\{(\tilde{u}_n, \widetilde{W}_n)\}_{n \in \mathbb{N}}$, $(\tilde{u}, \widetilde{W})$, living in this probability space, such that the distribution of $\{(\tilde{u}_n, \widetilde{W}_n)\}_{n \in \mathbb{N}}$ is the same as $\{(u_n, W_n)\}_{n \in \mathbb{N}}$ for every $n \in \mathbb{N}$, and 
\begin{equation}
\label{converge}
\text{the sequence $\{(\tilde{u}_n, \widetilde{W}_n)\}_{n \in \mathbb{N}}$ converges to $(\tilde{u}, \widetilde{W})$ in $Z_T \times C([0,T];\mathbb{R})$, $\widetilde{\mathbb{P}}$-a.s.}
\end{equation}
\\
We denote the filtration generated by $\tilde u, \widetilde{W}$, and $\widetilde{\mathbb{P}}$-null sets by $\{\widetilde{\mathcal{G}}_t\}_{t \in [0,T]}$ and we define $\widetilde{\mathcal{F}}_t := \bigcap_{s>t} \widetilde{\mathcal{G}}_s$, a filtration satisfying the standard assumptions. The filtrations $\{\widetilde{\mathcal{G}}^n_t\}_{t \in [0,T]}$ and $\{\widetilde{\mathcal{F}}^n_t\}_{t \in [0,T]}$ for $\tilde{u}_n$ and $\widetilde{W}^n$ are defined similarly.
\\
For every $n \in \mathbb{N}$, $\widetilde{W}^n$ is an $\{\widetilde{\mathcal{F}}_t^n\}_{t \in [0,T]}$-adapted Brownian motion, and the limit process $\widetilde W$ is an $\{ \widetilde{\mathcal{F}}_t^n\}_{t \in [0,T]}$-adapted Brownian motion
 (see \cite[Lemma 4.1]{BagMauXu2023}).
\\
 Moreover, arguing as in \cite[Lemma 4.2]{BagMauXu2023}, one proves that, for any $n \in \mathbb{N}$, $(\widetilde{\Omega}, \widetilde{\mathcal{F}},\{\widetilde{\mathcal{F}}_t^n\}_t, \widetilde{\mathbb{P}}, \tilde{u}_n, \widetilde{W}_n)$ is a solution to \eqref{Galerkin}.

The next result shows how the convergence of $\tilde{u}_n$ to $\tilde u$ in $Z_T$ can be used for the convergence of the terms that appear in the Galerkin equation \eqref{Galerkin}.
\begin{lemma}
\label{limit}
Let $s$ and $s'$  be as in \eqref{STAR}. Assume  \eqref{sigma+n}, \ref{H1}--\ref{H3} and \ref{H5}.
For all $t \in[0,T]$ and  $\tilde{\mathbb{P}}$-a.s. 
\begin{itemize}
\item [(i)] 
$\displaystyle \lim_{n \rightarrow \infty}\tilde{u}_n(t)=\tilde{u}(t)\quad$ in $H^{s'}$, 
\item [(ii)]  
$\displaystyle \lim_{n \rightarrow \infty}\int_0^t A \tilde{u}_n(r ) {\rm d}r=\int_0^t A \tilde{u}(r ) {\rm d}r,
\quad$ in $H^{s'-2}$, 
\item [(iii)]  
$\displaystyle \lim_{n \rightarrow \infty}\int_0^t  P_n  F(\tilde{u}_n(r))\, {\rm d}r=\int_0^t  F(\tilde{u}(r))\, {\rm d}r,
\quad$ in $H^{s'}$.
\end{itemize}
\begin{itemize}
\item [(iv)] 
$\displaystyle \lim_{n \rightarrow \infty}\int_0^t \|P_n \phi (\tilde{u}_n(r))-  \phi (\tilde{u}(r))\|^2_{-s-1}\, {\rm d}r=0$.
\end{itemize}
\end{lemma}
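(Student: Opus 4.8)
The plan is to argue $\tilde{\mathbb{P}}$-almost surely. I fix $\omega$ in the full-measure event on which \eqref{converge} holds, so that in particular $\tilde{u}_n\to\tilde{u}$ in $C([0,T];H^{s'})$, i.e. $\sup_{t\in[0,T]}\|\tilde{u}_n(t)-\tilde{u}(t)\|_{s'}\to0$; consequently the trajectories are uniformly bounded, $R:=\sup_n\sup_{t\in[0,T]}\|\tilde{u}_n(t)\|_{s'}<\infty$, which is what lets me control the locally bounded nonlinear and diffusion terms. Statement (i) is then immediate, being the pointwise-in-$t$ reading of this uniform convergence. For (ii), I would use that $A=-\Delta$ is bounded from $H^{s'}$ into $H^{s'-2}$, so that
\[\left\|\int_0^t A\big(\tilde{u}_n(r)-\tilde{u}(r)\big)\,{\rm d}r\right\|_{s'-2}\le\int_0^t\big\|A\big(\tilde{u}_n(r)-\tilde{u}(r)\big)\big\|_{s'-2}\,{\rm d}r\lesssim T\sup_{r\in[0,T]}\|\tilde{u}_n(r)-\tilde{u}(r)\|_{s'},\]
and the right-hand side tends to $0$.

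For (iii) I would split $P_nF(\tilde{u}_n)-F(\tilde{u})=P_n\big(F(\tilde{u}_n)-F(\tilde{u})\big)+(P_n-I)F(\tilde{u})$ and estimate in $H^{s'}$. For the first summand, the bound $\|P_n\|_{\mathcal{L}(H^{s'})}\le1$ from \eqref{bound_H^s_Pn} together with the local Lipschitz estimate of Lemma \ref{Lemma_F}(ii) (available since $s'>\frac d2$ by \eqref{STAR}) give $\|F(\tilde{u}_n(r))-F(\tilde{u}(r))\|_{s'}\lesssim\big(R^{2\sigma}+\|\tilde{u}(r)\|_{s'}^{2\sigma}\big)\|\tilde{u}_n(r)-\tilde{u}(r)\|_{s'}\lesssim R^{2\sigma}\,\|\tilde{u}_n(r)-\tilde{u}(r)\|_{s'}$, which is uniformly small after integration over $[0,t]$. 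For the second summand, $(P_n-I)F(\tilde{u}(r))\to0$ in $H^{s'}$ for every $r$ by \eqref{limit_H^s}, and since $\|(P_n-I)F(\tilde{u}(r))\|_{s'}\le2\|F(\tilde{u}(r))\|_{s'}\lesssim R^{2\sigma+1}$ is integrable on $[0,t]$, dominated convergence gives $\int_0^t\|(P_n-I)F(\tilde{u}(r))\|_{s'}\,{\rm d}r\to0$.

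Part (iv) is the one I expect to be the main obstacle, because Assumption \ref{H2}(ii) only furnishes \emph{continuity} (and boundedness on balls) of $\phi:H^{s'}\to H^{-s-1}$, with no Lipschitz modulus, so the direct norm estimate used in (iii) is unavailable and I must pass to the limit through dominated convergence instead. I would again split $P_n\phi(\tilde{u}_n)-\phi(\tilde{u})=P_n\big(\phi(\tilde{u}_n)-\phi(\tilde{u})\big)+(P_n-I)\phi(\tilde{u})$. For each fixed $r$, the convergence $\tilde{u}_n(r)\to\tilde{u}(r)$ in $H^{s'}$ and continuity of $\phi$ yield $\|\phi(\tilde{u}_n(r))-\phi(\tilde{u}(r))\|_{-s-1}\to0$, which with $\|P_n\|_{\mathcal{L}(H^{-s-1})}\le1$ from \eqref{P_nU'} handles the first term, while $(P_n-I)\phi(\tilde{u}(r))\to0$ in $H^{-s-1}$ (strong convergence of $P_n$ to the identity, here obtained directly by truncating Fourier series) handles the second; hence the integrand tends to $0$ pointwise in $r$. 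To exchange limit and integral I would dominate using boundedness on balls: since $\|\tilde{u}_n(r)\|_{s'}\le R$, we get $\|P_n\phi(\tilde{u}_n(r))\|_{-s-1}\le\|\phi(\tilde{u}_n(r))\|_{-s-1}\le C_R$, so the integrand is bounded by the constant $2C_R^2+2\sup_{r\in[0,T]}\|\phi(\tilde{u}(r))\|_{-s-1}^2$, integrable on $[0,t]$. Dominated convergence then delivers the claim.
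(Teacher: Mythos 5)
Your proof is correct and follows the paper's own skeleton: the same a.s.\ reduction to the event where \eqref{converge} holds, the same direct estimate for (ii) via boundedness of $A:H^{s'}\to H^{s'-2}$, and the same decompositions $P_nF(\tilde u_n)-F(\tilde u)=P_n\bigl(F(\tilde u_n)-F(\tilde u)\bigr)+(P_n-I)F(\tilde u)$ and $P_n\phi(\tilde u_n)-\phi(\tilde u)=P_n\bigl(\phi(\tilde u_n)-\phi(\tilde u)\bigr)+(P_n-I)\phi(\tilde u)$, controlled by Lemma \ref{Lemma_F}(ii), \eqref{bound_H^s_Pn}, \eqref{P_nU'} and Assumption \ref{H2}(ii). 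The one genuine divergence is the mechanism for passing to the limit in the projection-error and continuity terms. The paper pulls $\sup_{0\le r\le t}$ out of the time integrals and invokes \eqref{limit_H^s}, i.e.\ $\|P_n-I\|_{\mathcal{L}(H^{r})}\to 0$; taken literally in the operator norm this is false ($I-P_n$ is the orthogonal projection onto the modes $|k|>n$, so its norm on $H^{r}$ equals $1$ for every $n$ — test it on $e_k$ with $|k|=n+1$), and what is true and actually needed is the strong convergence $P_nx\to x$ for each fixed $x$. You work instead with exactly this pointwise-in-$r$ strong convergence combined with dominated convergence, the dominating constants coming from the uniform trajectory bound $R$ and from boundedness on balls in \ref{H2}(ii); this is the airtight formulation. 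Similarly, in (iv) the paper deduces $\sup_{0\le r\le t}\|\phi(\tilde u_n(r))-\phi(\tilde u(r))\|_{-s-1}\to 0$ from bare continuity of $\phi$, which strictly speaking needs a short uniform-continuity-on-compacts argument; your pointwise-plus-dominated-convergence route bypasses this entirely. So your variant buys a bit of extra rigor at essentially no cost; the only cosmetic correction is that your citation of \eqref{limit_H^s} in part (iii) should be read as the strong (not operator-norm) convergence of $P_n$ to the identity, which is indeed how you use it, as you make explicit in part (iv).
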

\begin{proof}
We know from \eqref{converge} that $\tilde u_n \rightarrow \tilde u$ in $Z_T$, $\widetilde{\mathbb{P}}$-a.s.
In particular, $\tilde u_n \rightarrow \tilde u$ in $C([0,T];H^{s'})$ $\widetilde{\mathbb{P}}$-a.s.. 
This implies i). Also ii) follows easily
\[
\left\Vert \int_0^t A \tilde u_n(r) {\rm d}r-\int_0^t A \tilde u(r) {\rm d}r\right\Vert_{s'-2}
\le
 \int_0^t \|A \tilde u_n(r)  - A \tilde u(r) \|_{s'-2} {\rm d}r
 \le t \sup_{0\le r\le t } \| \tilde u_n(r)- \tilde u(r)\|_{s'}.
\]
For iii):
\[\begin{split}
\left\Vert \int_0^t P_n F(\tilde u_n(r)) {\rm d}r-\int_0^t F(\tilde u(r)) {\rm d}r\right\Vert_{s'}
&\le
 \int_0^t \| P_n F(\tilde u_n(r)) - P_n  F(\tilde u(r))  \|_{s'}  {\rm d}r
 +
  \int_0^t \|  P_n F(\tilde u(r))-F(\tilde u(r))  \|_{s'}  {\rm d}r.
\end{split}
\]
We estimate each addend in the right hand side:
\[\begin{split}
 \int_0^t \| P_n F(\tilde u_n(r)) - P_n F(\tilde u (r))  \|_{s'}  {\rm d}r
& \le  \int_0^t \|  F(\tilde u_n(r)) -  F(\tilde u (r))  \|_{s'}  {\rm d}r
\\&  \lesssim_{s',d,\sigma}
 \int_0^t  \left( \|\tilde u_n(r)\|_{s'}^{2\sigma}+  \|\tilde u(r)\|_{s'}^{2\sigma}\right)
                \|(\tilde u_n(r)-\tilde u(r)\|_{s'}  {\rm d}r
                \\
 &   \lesssim
 t         \left(  \sup_{0\le r\le t }  \|(\tilde u_n(r)\|_{s'}^{2\sigma}
     +   \sup_{0\le r\le t }  \|\tilde u(r)\|_{s'}^{2\sigma}\right)
 \sup_{0\le r\le t }    \|(\tilde u_n(r)-\tilde u(r)\|_{s'}
 \end{split}
\]
and
\[\begin{split}
 \int_0^t \| P_n F(\tilde u(r))-F(\tilde u(r))  \|_{s'}  {\rm d}r
 & \le  t \|P_n-I\|_{\mathcal L(H^{s'})} \sup_{0\le r\le t } \| F(\tilde u(r)) \|_{s'} 
 \\
& \lesssim
 t \|P_n-I\|_{\mathcal L(H^{s'})} \sup_{0\le r\le t }   \| \tilde u(r) \|_{s'}^{2\sigma+1}, \quad
\text{ by Lemma  } \ref{Lemma_F}
  \end{split}               
\]
Both terms converge to zero as $n \rightarrow \infty$ in virtue of \eqref{limit_H^s} and since $\tilde u_n \rightarrow \tilde u$ in $Z_T$.
\\
It remains to show the convergence of the stochastic term.
To prove assertion (iv) we exploit Assumption \ref{H2}(ii). We write 
\begin{align*}
\int_0^t \|P_n \phi (\tilde{u}_n(r))-  \phi (\tilde{u}(r))\|^2_{-s-1}\, {\rm d}r
& \le \int_0^t \|P_n \phi (\tilde{u}_n(r))-  P_n\phi (\tilde{u}(r))\|^2_{-s-1}\, {\rm d}r + \int_0^t \|P_n \phi (\tilde{u}(r))-  \phi (\tilde{u}(r))\|^2_{-s-1}\, {\rm d}r.
\end{align*}
We estimate each addend in the right hand side.
\begin{align*}
 \int_0^t \|P_n \phi (\tilde{u}_n(r))-  P_n\phi (\tilde{u}(r))\|^2_{-s-1}\, {\rm d}r \le t \|P_n\|_{\mathcal{L}(H^{-s-1})} \sup_{0 \le r \le t} \|\phi (\tilde{u}_n(r))-\phi(u(r))\|^2_{-s-1},
\end{align*}
which converges to zero, as $n \rightarrow \infty$, since $\phi$ is a continuous map from $H^{s'}$ into $H^{-s-1}$ in virtue of Assumption \ref{H2}(ii) and $\tilde u_n \rightarrow \tilde u$ in $Z_T$.
\begin{align*}
\int_0^t \|P_n \phi (\tilde{u}(r))-  \phi (\tilde{u}(r))\|^2_{-s-1}\, {\rm d}r \le t \|P_n-I\|_{\mathcal{L}(H^{-s-1})}\sup_{0 \le r \le t}\|\phi(\tilde{u}(r))\|_{-s-1},
\end{align*}
which converges to zero, as $n \rightarrow \infty$ thanks to \ref{H2}(ii).
\end{proof}

\begin{remark}
\label{Hs'_rem}
From the proof of statement (iv) in Proposition \ref{limit} it is clear the role played by the auxiliary space $H^{s'}$ that appears in Assumption \ref{H2}. From the tightness in the space $Z_T$ we infer the strong convergence in $H^{s'}$and the weak convergence in $H^s$: this last convergence is not enough  to pass to the limit in the stochastic term.
\end{remark}

We have now all the ingredients to prove the following.
\begin{proposition}
\label{prop_ex_mar}
Assume  \eqref{sigma+n}, \ref{H1}-\ref{H3} and \ref{H5}, and let $s$ and $s'$ be as in \eqref{STAR}. 
Then for  every   $u^0 \in H^{s}$ there exists a  martingale solution to \eqref{NLS} defined  on the time interval  $[0,+\infty)$,
 with $\widetilde{\mathbb{P}}$-a.a. paths in $C([0,+\infty);H^{s'}) \cap C_w([0,+\infty);H^{s})$.
\end{proposition}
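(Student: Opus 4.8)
The plan is to pass to the limit, as $n\to\infty$, in the Galerkin equation \eqref{Galerkin} written for the Skorokhod copies $(\tilde u_n,\widetilde W_n)$. As recalled above (arguing as in \cite[Lemma 4.2]{BagMauXu2023}), for every $n$ the tuple $(\widetilde\Omega,\widetilde{\mathcal F},\{\widetilde{\mathcal F}^n_t\}_t,\widetilde{\mathbb P},\tilde u_n,\widetilde W_n)$ solves \eqref{Galerkin}; that is, $\widetilde{\mathbb P}$-a.s.\ and for every $t\in[0,T]$,
\[
\tilde u_n(t)=P_n u^0+\im\int_0^t\bigl[A\tilde u_n(r)-\alpha P_nF(\tilde u_n(r))\bigr]\,\df r+\int_0^t P_n\phi(\tilde u_n(r))\,\df\widetilde W_n(r),
\]
the identity holding in $H_n\subset H^{-s-1}$. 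I would show that each term converges, $\widetilde{\mathbb P}$-a.s.\ (or in probability, extracting a further subsequence), to the corresponding term of \eqref{eqn-ItoFormSolution}, thereby exhibiting $(\widetilde\Omega,\widetilde{\mathcal F},\widetilde{\mathbb P},\{\widetilde{\mathcal F}_t\},\widetilde W,\tilde u)$ as a martingale solution in the sense of Definition \ref{def-martingale solution}.

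For the deterministic part this is immediate from Lemma \ref{limit}: the initial condition obeys $P_nu^0\to u^0$ by \eqref{limit_H^s}, the drift term $\int_0^t A\tilde u_n\to\int_0^t A\tilde u$ by item (ii), and the nonlinear term $\int_0^t P_nF(\tilde u_n)\to\int_0^t F(\tilde u)$ by item (iii). Each of these convergences takes place in a space ($H^{s'}$, $H^{s'-2}$, $H^{s'}$ respectively) which, thanks to \eqref{STAR} and $s+s'>1$, embeds continuously into $H^{-s-1}$, so the convergences hold there as well.

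The delicate point, and the main obstacle, is the convergence of the stochastic integrals $\int_0^t P_n\phi(\tilde u_n)\,\df\widetilde W_n$ towards $\int_0^t\phi(\tilde u)\,\df\widetilde W$ in $H^{-s-1}$. Two difficulties are intertwined: the driving Brownian motions $\widetilde W_n$ are genuinely distinct from the limit $\widetilde W$, and the integrands converge only in the weak norm, namely $\int_0^t\|P_n\phi(\tilde u_n)-\phi(\tilde u)\|^2_{-s-1}\,\df r\to 0$ $\widetilde{\mathbb P}$-a.s.\ by Lemma \ref{limit}(iv) (here $\phi(\tilde u)$ defines a genuine integrand in $H^{-s-1}$ by Assumption \ref{H2}(ii) and the bound \eqref{est_1}). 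I would resolve this by a standard stochastic-integral convergence lemma in the spirit of \cite{BagMauXu2023}: since the $\widetilde W_n$ are $\{\widetilde{\mathcal F}^n_t\}$-Brownian motions converging uniformly on $[0,T]$ to the $\{\widetilde{\mathcal F}_t\}$-Brownian motion $\widetilde W$, and the adapted integrands converge in $L^2(0,T;H^{-s-1})$ in probability, the corresponding It\^o integrals converge in probability in $C([0,T];H^{-s-1})$; a further subsequence then yields $\widetilde{\mathbb P}$-a.s.\ convergence.

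Putting the two parts together gives that \eqref{eqn-ItoFormSolution} holds in $H^{-s-1}$ for every fixed $t\in[0,T]$, $\widetilde{\mathbb P}$-a.s. Since $\tilde u\in Z_T$ by \eqref{converge}, its paths lie in $C([0,T];H^{s'})\cap C_w([0,T];H^s)$, so both sides of the identity are continuous in $t$ with values in $H^{-s-1}$; hence the equality holds for all $t$ simultaneously, $\widetilde{\mathbb P}$-a.s., and we obtain a martingale solution on $[0,T]$. Finally, since $T>0$ is arbitrary and the a priori bounds of Propositions \ref{p_tight_1} and \ref{p_tight_2} are available on every finite interval, the laws of $u_n$ are tight in $C([0,+\infty);H^{s'})\cap C_w([0,+\infty);H^s)$ endowed with the topology of uniform convergence on compact subintervals; applying Jakubowski's theorem on this space and repeating the limit passage on each $[0,T]$ delivers a martingale solution defined on all of $[0,+\infty)$ with the asserted path regularity.
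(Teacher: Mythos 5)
Your proposal is correct and follows essentially the same route as the paper: convergence of the deterministic terms via Lemma \ref{limit}(i)--(iii), convergence of the stochastic integrals via Lemma \ref{limit}(iv) combined with the stochastic-integral convergence lemma of \cite[Lemma 4.3]{BagMauXu2023} (exactly the ``standard lemma'' you invoke), and passage from $[0,T]$ to $[0,+\infty)$ by tightness on compact intervals as in Remark \ref{rem_inf}. No gaps; your additional observation that the identity holds for all $t$ simultaneously by continuity of both sides in $H^{-s-1}$ is a harmless strengthening of what Definition \ref{def-martingale solution} requires.
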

\begin{proof}
We prove the existence of a solution on any finite time interval $[0,T]$, for arbitrary  $T>0$.
As said at the beginning of this section, we have proved that on a common probability space $(\widetilde{\Omega}, \widetilde{\mathcal{F}}, \widetilde{\mathbb{P}})$ there exist random variables $\{(\tilde{u}_n, \widetilde{W}_n)\}_{n \in \mathbb{N}}$ and $(\tilde u, \widetilde{W})$ such that, $\widetilde{\mathbb{P}}$-a.s. $(\tilde{u}_n, \widetilde{W}_n)$ converges to $(\tilde u, \widetilde{W})$ in $Z_T \times C([0,T]; \mathbb{R})$ and satisfies
\begin{equation}
\label{eq_P}
\tilde{u}_n(t)=\tilde{u}_n(0)+ \im  \int_0^tA  \tilde{u}_n(s)\,{\rm d} s-\im  \alpha \int_0^t P_nF(\tilde{u}_n(s))\, {\rm d}s + \int_0^t P_n\phi(\tilde{u}_n(s))  \,{\rm d} \widetilde W^n(s), \qquad t \in [0,T].
\end{equation}
In view of Lemma \ref{limit}(iv) and \cite[Lemma 4.3]{BagMauXu2023}, passing to $n\to\infty$ along a subsequence, we have that 
\begin{equation*}
 \int_0^t P_n\phi(\tilde{u}_n(s))  \,{\rm d} \widetilde W^n(s) \rightarrow  \int_0^t\phi(\tilde{u}(s))  \,{\rm d} \widetilde W(s), \qquad \text{in} \ H^{-s-1}, \quad \widetilde{\mathbb{P}}-a.s.
\end{equation*}
Bearing in mind Lemma \ref{limit}(i)-(iii), passing to the $\widetilde{\mathbb{P}}$-a.s. limit in each term in the equation \eqref{eq_P}, we obtain, for any $t \in [0,T]$, 
\begin{equation*}
\tilde{u}(t)=\tilde{u}(0)+ \im \int_0^tA  \tilde{u}(s)\,{\rm d} s-\im \alpha\int_0^t F(\tilde{u}(s))\, {\rm d}s + \int_0^t\phi(\tilde{u}(s))  \,{\rm d} \widetilde W(s),
\end{equation*}
$\tilde{\mathbb{P}}$-a.s., 
understood as an identity in $H^{-s-1}$.
Thus $(\widetilde{\Omega}, \widetilde{\mathcal{F}},\widetilde{\mathbb{F}}, \widetilde{\mathbb{P}}, \tilde{u}, \widetilde{W})$ is a martingale solution to \eqref{NLS_abs} on $[0,T]$. This concludes the proof.
\end{proof}

We now exploit the mild formulation of equation \eqref{NLS_abs} to infer that the solution process has $\mathbb{\tilde{P}}$-a.s. paths in $C([0,\infty);H^s)$.
\begin{proposition}
\label{reg_Str_est}
Let $(\widetilde{\Omega}, \widetilde{\mathcal{F}},\widetilde{\mathbb{F}}, \widetilde{\mathbb{P}}, \tilde{u}, \widetilde{W})$ be the martingale solution to \eqref{NLS_abs},  given in Proposition \ref{prop_ex_mar}. 
Then, $\tilde u$ has $\tilde{\mathbb{P}}$-a.s. trajectories in $C([0,+\infty);H^s)$.
\end{proposition}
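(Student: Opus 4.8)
The plan is to pass to the \emph{mild} (variation-of-constants) formulation of \eqref{NLS_abs} and to show that each of the three resulting terms has $\tilde{\mathbb P}$-a.s. continuous trajectories in $H^s$. Since $A$ generates the unitary $C_0$-group $(e^{\im tA})_{t\in\mathbb R}$ on every $H^r$, the integral identity \eqref{eqn-ItoFormSolution}, which holds in $H^{-s-1}$, is equivalent to
\begin{equation*}
\tilde u(t)=e^{\im tA}u^0-\im\alpha\int_0^t e^{\im(t-r)A}F(\tilde u(r))\,\df r+\int_0^t e^{\im(t-r)A}\phi(\tilde u(r))\,\df\widetilde W(r),
\end{equation*}
the equivalence of the analytically weak and the mild formulation for a semilinear equation driven by a $C_0$-group being classical (it follows from the stochastic Fubini theorem). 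I would first record that, by the a priori bound \eqref{est_1} together with the weak lower semicontinuity of $u\mapsto\|u\|_{L^\infty(0,T;H^s)}$ on $Z_T$, the limit process satisfies $\tilde u\in L^\infty(0,T;H^s)$ $\tilde{\mathbb P}$-a.s.; I then work on an arbitrary finite interval $[0,T]$.

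The first two terms are handled by soft arguments. The free term $t\mapsto e^{\im tA}u^0$ lies in $C([0,T];H^s)$ because $u^0\in H^s$ and the group is strongly continuous on $H^s$. For the deterministic convolution, set $M:=\|\tilde u\|_{L^\infty(0,T;H^s)}<\infty$; by \eqref{stimaF} and the embedding \eqref{Sobolev} one has $\|F(\tilde u(r))\|_s\le K\|\tilde u(r)\|_{L^\infty}^{2\sigma}\|\tilde u(r)\|_s\lesssim M^{2\sigma+1}$, so $F(\tilde u(\cdot))\in L^\infty(0,T;H^s)$. Writing, for $t<t'$,
\begin{equation*}
\int_0^{t'}e^{\im(t'-r)A}F(\tilde u(r))\,\df r-\int_0^{t}e^{\im(t-r)A}F(\tilde u(r))\,\df r=(e^{\im(t'-t)A}-I)\!\int_0^t e^{\im(t-r)A}F(\tilde u(r))\,\df r+\int_t^{t'}e^{\im(t'-r)A}F(\tilde u(r))\,\df r,
\end{equation*}
the second summand is $O(|t'-t|)$ in $H^s$ by unitarity, while the first tends to $0$ by strong continuity of the group applied to the $H^s$-valued vector $\int_0^t e^{\im(t-r)A}F(\tilde u(r))\,\df r$; hence this term belongs to $C([0,T];H^s)$ as well.

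The only genuine difficulty is the continuity in the top space $H^s$ of the stochastic convolution $Z(t):=\int_0^t e^{\im(t-r)A}\phi(\tilde u(r))\,\df\widetilde W(r)$, and here I would combine the factorization method of Da Prato, Kwapie\'n and Zabczyk with a localization, since the a priori estimates only provide control in probability and not moment bounds. Fix $\kappa\in(0,\tfrac12)$ and $p>2$ with $\kappa>\tfrac1p$. For $M>0$ let $\tau_M:=\inf\{t:\|\tilde u(t)\|_s>M\}$ (a stopping time, $t\mapsto\|\tilde u(t)\|_s$ being lower semicontinuous) and set $Z_M(t):=\int_0^t e^{\im(t-r)A}\phi(\tilde u(r))\mathbf 1_{\{r\le\tau_M\}}\,\df\widetilde W(r)$. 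On $\{r\le\tau_M\}$ Assumption \ref{H2}(i) gives $\|\phi(\tilde u(r))\|_s\le C_M$, so, putting $Y_M(s):=\int_0^s(s-r)^{-\kappa}e^{\im(s-r)A}\phi(\tilde u(r))\mathbf 1_{\{r\le\tau_M\}}\,\df\widetilde W(r)$, the Burkholder--Davis--Gundy inequality and the unitarity of the group yield
\begin{equation*}
\E\|Y_M(s)\|_s^p\lesssim \E\Big(\int_0^s(s-r)^{-2\kappa}\|\phi(\tilde u(r))\mathbf 1_{\{r\le\tau_M\}}\|_s^2\,\df r\Big)^{p/2}\lesssim C_M^p\,s^{(1-2\kappa)p/2},
\end{equation*}
whence $Y_M\in L^p(\Omega\times(0,T);H^s)$. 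Since the factorization identity $Z_M=c_\kappa\int_0^{\,\cdot}(\cdot-s)^{\kappa-1}e^{\im(\cdot-s)A}Y_M(s)\,\df s$ holds and the fractional convolution operator maps $L^p(0,T;H^s)$ continuously into $C([0,T];H^s)$ when $\kappa>\tfrac1p$, we obtain $Z_M\in C([0,T];H^s)$ $\tilde{\mathbb P}$-a.s. As $Z=Z_M$ on $\{\tau_M\ge T\}$, and these events increase to a set of full $\tilde{\mathbb P}$-measure as $M\to\infty$ (because $\tilde u\in L^\infty(0,T;H^s)$ a.s.), it follows that $Z\in C([0,T];H^s)$ $\tilde{\mathbb P}$-a.s. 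Summing the three terms and letting $T\to\infty$ gives $\tilde u\in C([0,\infty);H^s)$ $\tilde{\mathbb P}$-a.s. The main obstacle, as anticipated, is precisely the $H^s$-continuity of $Z$: weak continuity in $H^s$ does not by itself upgrade to norm continuity, and the lack of moment bounds is what forces the truncation above rather than a direct application of the factorization method.
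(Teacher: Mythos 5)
Your proposal is correct, and its skeleton coincides with the paper's: both pass to the mild formulation and check continuity of the three terms separately, with identical soft arguments for the free evolution and the deterministic convolution (via \eqref{stimaF}, \eqref{Sobolev} and the pathwise bound $\tilde u\in C_w([0,T];H^s)\subset L^\infty(0,T;H^s)$). The genuine divergence is twofold. First, you obtain the mild form from the weak form by the classical stochastic-Fubini equivalence, while the paper derives it by applying the It\^o formula of \cite[Theorem 2.4]{BvNVW} to $f(r,x)=e^{-\im(t-r)A}x$; both are standard. Second, and more substantially, for the stochastic convolution the paper uses a much softer argument: it only checks that $\int_0^t\|\phi(\tilde u(r))\|_s^2\,\df r<\infty$ a.s.\ (via \ref{H2}(i)) and concludes that the integral ``exists as a local martingale and has a continuous version''. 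Strictly speaking the stochastic convolution is not a local martingale because of the $t$ inside the integrand; the paper's claim is rescued precisely by the unitary group structure, writing
\begin{equation*}
\int_0^t e^{-\im(t-r)A}\phi(\tilde u(r))\,\df \widetilde W(r)=e^{-\im tA}\int_0^t e^{\im rA}\phi(\tilde u(r))\,\df \widetilde W(r),
\end{equation*}
where the inner integral is a genuine continuous local martingale in $H^s$ (its integrand being a.s.\ square integrable) and composition with the strongly continuous unitary group preserves continuity. You instead invoke the Da Prato--Kwapie\'n--Zabczyk factorization method combined with a stopping-time localization $\tau_M$ to substitute for the missing moment bounds; your localization, the BDG estimate for $Y_M$, the identity $Z=Z_M$ on $\{\tau_M\ge T\}$, and the exhaustion argument are all sound (note that $\sup_{t\le T}\|\tilde u(t)\|_s<\infty$ a.s.\ also follows directly from $\tilde u\in C_w([0,T];H^s)$ by Banach--Steinhaus, without appealing to \eqref{est_1}). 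What each route buys: the paper's argument is shorter and exploits the isometry of the Schr\"odinger group in an essential way, while yours is heavier but more robust --- it would survive replacing the unitary group by a general $C_0$-semigroup, where the commuting trick fails and factorization is the standard tool --- and it makes fully explicit a point the paper leaves implicit.
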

\begin{proof}
Let $(\widetilde{\Omega}, \widetilde{\mathcal{F}},\widetilde{\mathbb{F}}, \widetilde{\mathbb{P}}, \tilde{u}, \widetilde{W})$ be the martingale solution to \eqref{NLS_abs}, given in Proposition \ref{prop_ex_mar}. 
Let us choose and fix $t>0$.
We apply the It\^o formula, see \cite[Theorem 2.4]{BvNVW},  to the process $\im f(r, \tilde u(t-r))$, $r \in [0,t]$,  where $f$ is the function defined as
\begin{equation*}
f : [0,t]\times H^{-s-1} \ni  (r,x) \mapsto e^{-\im (t-r)A}x \in H^{-s-3}.
\end{equation*}
The function $f$ is of  $C^{1,2}$-class and 
we deduce that $\widetilde{\mathbb{P}}$-a.s.,
\begin{align}
\label{mild_form}
\tilde u(t)=e^{-itA}u^0 -\im \alpha \int_0^t e^{-i(t-r)A}F(\tilde{u}(r))\, {\rm d}r+\int_0^t e^{-i(t-r)A}\phi(\tilde{u}(r))\,{\rm d}\widetilde{W}(r),
\end{align}
in $H^{-s-3}$.
We now show that,  $\widetilde{\mathbb{P}}$-a.s., each addend in the right hand side is in $C([0,+\infty);H^s)$.
\\
Since $u^0\in H^s$, this is trivial for the first term. 
For the second term, it is enough to show that the integrand 
$[0,t]\ni r \mapsto e^{-i(t-r)A}F(\tilde{u}(r))\in H^s$ is integrable. From estimate \eqref{stimaF} and the continuous Sobolev embedding \eqref{Sobolev} we have
\[
\| e^{-i(t- r)A}F(\tilde{u}(r)) \|_{s}
\le
\|F(\tilde{u}(r)) \|_{s}
\le
K \|\tilde{u}(r)\|^{2\sigma}_{L^\infty} \|\tilde{u}(r)\|_{s}
 \lesssim_{s, \sigma, K}
  \|\tilde{u}(r)\|^{2\sigma+1}_{s},
\]
which is integrable on any finite time interval $[0,t]$, since  pathwise we have that 
$\tilde{u} \in C_w([0,t]; H^s) \subset L^\infty(0,t; H^s)$.

For the stochastic integral in \eqref{mild_form}  it is enough to show that the integrand in square integrable, $\mathbb{P}$-a.s.; 
in this way the stochastic integral exists as a local martingale and has a continuous version. 
We have
\[
\int_0^t \|e^{-i(t-r)A}\phi(\tilde{u}(r))\|_s^2 {\rm d}r
\le
\int_0^t \|\phi(\tilde{u}(r))\|_s^2 {\rm d}r
\]
which is  $\widetilde{\mathbb{P}}$-a.s. finite from  Assumption \ref{H2}(i), since   $\tilde{u} \in C_w([0,t]; H^s) \subset L^\infty(0,t; H^s)$.
This concludes the proof.
 \end{proof}

\subsection{Pathwise uniqueness}
\label{path_uniq_sec}
We now prove the  pathwise uniqueness of the martingale solutions.
\begin{proposition}
\label{path_uniq_prop}
Assume  \eqref{sigma+n} and \ref{H1}-\ref{H5}.
 Then, the pathwise uniqueness holds for equation \eqref{NLS_abs}, i.e., given any $T>0$, if $u_1$, $u_2$ are two 
martingale solutions to \eqref{NLS_abs} on the time interval $[0,T]$, defined on the same filtered probability space $(\Omega, \mathcal{F}, \mathbb{F}, \mathbb{P})$ with respect to the same Brownian motion $W$, with the same initial data in $H^s$, then 
\begin{equation*}
\mathbb{P}\left(u_2(t)=u_1(t), \quad \forall \ t \in [0,T]\right)=1.
\end{equation*}
\end{proposition}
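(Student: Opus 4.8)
The plan is to localize and then run a Grönwall argument on the squared $H^s$-distance of the two solutions. Fix $T>0$ and set $w=u_1-u_2$. Since $u_1,u_2$ have, by Proposition~\ref{reg_Str_est}, $\mathbb{P}$-a.s. continuous (hence locally bounded) paths in $H^s$, for each $R>0$ the stopping time
\[
\tau_R:=\inf\{t\ge0:\ \|u_1(t)\|_s\ge R\ \text{or}\ \|u_2(t)\|_s\ge R\}
\]
satisfies $\tau_R\to\infty$ $\mathbb{P}$-a.s. as $R\to\infty$, because $\sup_{[0,T]}\|u_j\|_s<\infty$ a.s. It therefore suffices to prove $w\equiv0$ on $[0,\tau_R]$ for each fixed $R$ and then let $R\to\infty$. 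The genuine obstacle is that the equation for $w$,
\[
\mathrm{d}w=\im\big[Aw-\alpha(F(u_1)-F(u_2))\big]\,\mathrm{d}t+\big(\phi(u_1)-\phi(u_2)\big)\,\mathrm{d}W,
\]
holds only in $H^{-s-1}$ and carries the unbounded operator $A$, so a naive Itô formula for $\|w(t)\|_s^2$ is not justified: the pairing $(w,\im Aw)_s$ needs $w\in H^{s+1}$ to be literally defined, even though it is purely imaginary and morally drops out (cf. \eqref{prodotto-reale}). To remove $A$ I would pass to $v_j(t):=e^{-\im tA}u_j(t)$. Since $(e^{\im tA})_t$ is a unitary $C_0$-group on $H^s$, one has $\|v_1(t)-v_2(t)\|_s=\|w(t)\|_s$, and the mild formulation \eqref{mild_form}, available for any martingale solution exactly as in the proof of Proposition~\ref{reg_Str_est}, shows that
\[
v_j(t)=u^0-\im\alpha\int_0^t e^{-\im rA}F(u_j(r))\,\mathrm{d}r+\int_0^t e^{-\im rA}\phi(u_j(r))\,\mathrm{d}W(r)
\]
is a bona fide $H^s$-valued Itô process with continuous paths, the Bochner and stochastic integrals converging in $H^s$ thanks to \eqref{stimaF} and Assumption~\ref{H2}(i).

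Writing $z:=v_1-v_2$, the coefficients of $\mathrm{d}z$ now take values in $H^s$, so the Hilbert-space Itô formula for $\|z(t)\|_s^2$ applies with no regularity gap. Using unitarity of $e^{-\im rA}$ on $H^s$, it reads, up to a local martingale $M$,
\[
\mathrm{d}\|z\|_s^2=\Big[-2\Re\big(z,\im\alpha e^{-\im rA}(F(u_1)-F(u_2))\big)_s+\|\phi(u_1)-\phi(u_2)\|_s^2\Big]\mathrm{d}r+\mathrm{d}M.
\]
On $[0,\tau_R]$, where $\|u_1\|_s,\|u_2\|_s\le R$ and hence $\|z\|_s=\|w\|_s\le 2R$, I would bound the drift by Cauchy--Schwarz together with the estimate \eqref{F3} for $F$ and Assumption~\ref{H4} for $\phi$:
\[
\big|\Re\big(z,\im\alpha e^{-\im rA}(F(u_1)-F(u_2))\big)_s\big|\lesssim |\alpha|\,R^{2\sigma}\|w\|_s^2,\qquad \|\phi(u_1)-\phi(u_2)\|_s^2\le \psi(R,R)^2\,\|w\|_s^2 .
\]
Since these integrands are bounded on $[0,\tau_R]$, the stopped process $M(\cdot\wedge\tau_R)$ is a true martingale with vanishing expectation.

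Taking expectations with $C_R:=2|\alpha|R^{2\sigma}+\psi(R,R)^2$ (up to the constant of \eqref{F3}) yields
\[
\mathbb{E}\|z(t\wedge\tau_R)\|_s^2\le C_R\int_0^t \mathbb{E}\|z(r\wedge\tau_R)\|_s^2\,\mathrm{d}r,\qquad t\in[0,T].
\]
As $z(0)=u^0-u^0=0$, Grönwall's lemma forces $\mathbb{E}\|z(t\wedge\tau_R)\|_s^2=0$ for all $t\in[0,T]$, whence $w\equiv0$ on $[0,\tau_R]$ $\mathbb{P}$-a.s. Letting $R\to\infty$ along a sequence gives $\mathbb{P}(u_1(t)=u_2(t)\ \forall\,t\in[0,T])=1$, as claimed. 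The only delicate point is the one isolated above, namely the unitary change of variable that removes $A$ and makes a genuine $H^s$-valued Itô formula available; once this is in place, the local Lipschitz bounds \eqref{F3} and \ref{H4} together with the standard localization render the remaining Grönwall estimate routine.
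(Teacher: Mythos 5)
Your proof is correct, but it reaches the It\^o formula for the squared $H^s$-distance by a genuinely different route than the paper. The paper's proof rests on Lemma \ref{tec_lem} in Appendix \ref{tec_lem_sec}: it works directly with $v=u_1-u_2$ and justifies the energy identity through a Yosida-type regularization $R_\lambda=\lambda(\lambda I+A)^{-1}$, where the dangerous term disappears because $R_\lambda$ commutes with $A$, so that $\Re\big(R_\lambda v,\im R_\lambda Av\big)_s=\Re\big[\im\|A^{1/2}R_\lambda v\|_s^2\big]=0$, and one then passes to the limit $\lambda\to\infty$ in the remaining terms by dominated convergence. You instead remove $A$ altogether by the unitary conjugation $v_j=e^{-\im tA}u_j$ (interaction picture), using the mild formulation --- whose derivation in Proposition \ref{reg_Str_est} does indeed apply verbatim to any martingale solution, which is how the paper itself uses it --- to see that $v_j$ is a genuine $H^s$-valued It\^o process with $H^s$-valued coefficients; the standard Hilbert-space It\^o formula then applies with no regularity gap, and unitarity gives $\|v_1-v_2\|_s=\|u_1-u_2\|_s$. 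Both devices exploit the same structural fact, namely the skew-adjointness of $\im A$ in $H^s$, but yours trades the regularization limit for the (standard) commutation of the fixed bounded operator $e^{-\im tA}$ with the Bochner and stochastic integrals. Your Gr\"onwall step is also lighter than the paper's: since the stopped local martingale has bounded integrand on $[0,\tau_R]$ and hence zero mean, pointwise-in-time second moments suffice, and indistinguishability follows from path continuity; the paper instead runs Burkholder--Davis--Gundy to control $\mathbb{E}\sup_t\|v\|_s^4$, which is only needed if one insists on the supremum inside the expectation. Two cosmetic remarks: since \ref{H4} provides a locally bounded but not necessarily monotone $\psi$, your constant $\psi(R,R)^2$ should read $\sup_{x,y\in[0,R]}\psi(x,y)^2$; and your convention $v_j=e^{-\im tA}u_j$ is the one consistent with ${\rm d}u=\im Au\,{\rm d}t+\cdots$, differing by a sign from the group written in \eqref{mild_form} --- harmless either way, as the group is unitary on $H^s$.
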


\begin{proof}
Let $T>0$, we consider two martingale solutions on the time interval $[0,T]$,  defined on the same filtered probability space with the same Brownian motion and initial data in $H^s$. 
Given $M>0$ we consider the stopping times
\begin{equation*}
\tau_M^i:=\inf\{ t \in [0,T] \ : \|u_i(t)\|_{s} >M\}, \qquad i=1,2,
\end{equation*}
and set $\tau_M:= \tau_M^1 \wedge \tau_M^2$.
\\
Set  $v:=u_1-u_2$. Thanks to Lemma \ref{tec_lem} the equality 
\begin{equation}\label{Ito-unicita_bis}
\begin{split}
{\rm d} \|v(t)\|_{s}^2
&=2 \Re\big( v(t), -\im \alpha F(u_1(t))+ \im \alpha F(u_2(t))\big)_{s} {\rm d}t 
\\
&+ 2\Re\big( v(t), \phi(u_1(t))-\phi(u_2(t))\big)_{s} {\rm d}W(t)
+\|\phi(u_1(t))-\phi(u_2(t))\|^2_{s}\, {\rm d}t .
\end{split}
\end{equation}
is satisfied $\mathbb{P}$-a.s. on the time interval $[0, \tau_M]$.
\\
By means of the Cauchy-Schwartz inequality, \eqref{F3} and \eqref{STAR}, we estimate, within the interval $[0, \tau_M]$,
\begin{align*}
| \big( v(t), -\im\alpha F(u_1(t))+\im\alpha F(u_2(t))\big)_{s}|
&\le |\alpha|  \|v(t)\|_{s}  \|F(u_1(t))-F(u_2(t))\|_{s}
\\
& \lesssim  \left( \|u_1(t)\|^{2\sigma}_{s}+ \|u_2(t)\|^{2\sigma}_{s}\right) \|v(t)\|^2_{s} \lesssim C_M \|v(t)\|^2_{s},
\end{align*}
for a positive constant $C_M$.
Assumption \ref{H5} yields 
\begin{align*}
\|\phi (u_1(t))-\phi (u_2(t))\|_{s}
 \le \psi(\|u_1(t)\|_{s}, \|u_2(t)\|_{s} )\|u_1(t)-u_2(t)\|_{s}, 
\end{align*}
where $\psi:\mathbb{R}^+\times \mathbb{R}^+\rightarrow \mathbb{R}^+$  is a measurable locally bounded function.
Hence, for $t \in [0, \tau_M]$, there exists a positive constant $C_M$ such that
\begin{align*}
\|\phi (u_1(t))-\phi (u_2(t))\|^2_{s} \le C_M \|v(t)\|^2_{s}.
\end{align*}

Now we go back to \eqref{Ito-unicita_bis}, in its integral form. 
Taking into account the previous estimates, by applying the Burkholder-Davis-Gundy and the H\"older inequality to estimate the stochastic integral in \eqref{Ito-unicita_bis},  
we obtain
\begin{equation}\label{Ito-unicita_2}
\begin{split}
\mathbb{E}& \left[\sup_{r \in [0,t]}\|v(r \wedge \tau_M)\|^{4}_{s}\right]
\\
&\lesssim_{M,p} 
\mathbb{E}\left[\left( \int_0^{t \wedge \tau_M}\|v(r )\|^2_{s}  {\rm d}r \right)^2 \right]
+ 
\mathbb{E}\left[\left( \int_0^{t \wedge \tau_M}\|\phi(u_1(r))-\phi(u_2(r))\|^2_{s}\|v(r )\|^2_{s}\, {\rm d}r\right)\right]
\\
& \lesssim_{M,p} \mathbb{E}\left[\left( \int_0^{t \wedge \tau_M}\|v(r )\|^2_{s}\, {\rm d}r \right)^2 
\right] + \mathbb{E}\left[\left( \int_0^{t \wedge \tau_M}\|v(r )\|^4_{s}\, {\rm d}r \right) 
\right]
\\
& \lesssim_{M,p,T} \mathbb{E}\left[\int_0^{t \wedge \tau_M}\|v(r )\|^{4}_{s}\, {\rm d}r \right].
\end{split}\end{equation}
Since
\[
\mathbb{E}\left[\int_0^{t \wedge \tau_M}\|v(r)\|^{4}_{s}\, {\rm d}r\right]
\le \mathbb{E}\left[\int_0^t\|v(r \wedge \tau_M)\|^{4}_{s}\, {\rm d}r\right]
\le \int_0^t\mathbb{E}\left[\sup_{s \in [0,r]}\|v(s \wedge \tau_M)\|^{4}_{s}\right]\, {\rm d}r,
\]
from \eqref{Ito-unicita_2} we get
\begin{equation*}
\mathbb{E}\left[\sup_{r \in [0,t]}\|v(r \wedge \tau_M)\|^{4}_{s}\right]
\lesssim_{M,p,T}
\int_0^t\mathbb{E}\left[\sup_{r \in [0,\rho]}\|v(r \wedge \tau_M)\|^{4}_{s}\right]\, {\rm d}\rho .
\end{equation*}
By the Gronwall lemma we infer 
\begin{equation*}
\mathbb{E} \left[\sup_{r \in [0,t]}\|v(r \wedge \tau_M)\|^{4}_{s}\right]=0
\end{equation*}
for any $t \in [0,T]$; 
hence
\[
\sup_{r \in [0,T]}\|v(r \wedge \tau_M)\|^{4}_{s}=0 \qquad \mathbb{P}-a.s.
\]
Since both $u_1$ and $u_2$ live in $C([0,T];H^{s})$ $\mathbb{P}$-a.s., then $\tau_M^1 \rightarrow T$ and $\tau_M^2 \rightarrow T$, as $M \rightarrow \infty$, and
\[
\sup_{r \in [0,T]}\|v(r)\|^{4}_{s}=0 \qquad \mathbb{P}-a.s.
\]
 This concludes the proof.
\end{proof}



Existence of a martingale solution and pathwise uniqueness yield the existence of a unique strong solution (see e.g. \cite[Theorem 2]{Ondrejat_2004_Uniqueness} and \cite[Theorem 5.3 and Corollary 5.4]{Kunze_2013_Yamada}). Thus Theorem \ref{mainTH} follows as an immediate consequence of Propositions \ref{prop_ex_mar}, \ref{reg_Str_est} and \ref{path_uniq_prop}.

\section{Ergodic results }
\label{erg_res_sec}

In this section we are concerned with the study of the ergodic properties of the solution to equation \eqref{NLS_abs}. 

Following an idea of \cite{FlaGat},
in section \ref{mart_stat_sec} we prove that, under Assumptions \ref{H1}--\ref{H3} and \ref{H5bis}, we can construct stationary martingale solutions of equation \eqref{NLS_abs} as the limit of stationary solutions of the approximating finite-dimensional Galerkin system \eqref{Galerkin}. 
To prove the existence of at least one invariant measure, in section \ref{ex_inv_meas_sec},  we notice that, under assumptions \ref{H1}--\ref{H4} and \ref{H5bis}, the transition semigroup associated to \eqref{NLS_abs} is well defined and we readily have existence of invariant measures.
In section \ref{uni_inv_meas_sec} we work under the stronger Assumption \ref{H5bisbis} and we prove that $\mu=\delta_0$ is the unique invariant measure. Moreover, we prove the stability of the zero solution process. 

 \subsection{Stationary martingale solutions}
 \label{mart_stat_sec}

Let us  start with the definition of stationary martingale solutions. 
\begin{definition}[stationary martingale solution]\label{def-sta-martingale solution}

A \emph{stationary martingale solution} of the equation $\eqref{NLS_abs}$ on the time interval $[0,\infty)$ is a system $
\bigl(\tilde{\Omega},\tilde{\F},\tilde{\mathbb P},\tilde{\Filtration},\widetilde{W},u\bigr)
$
 consisting of
	\begin{itemize}
		\item a filtered probability space $\bigl(\tilde{\Omega},\tilde{\F},\tilde{\Prob},\tilde{\mathbb{F}})$,   satisfying the standard conditions; 
 		\item  a real valued one dimensional Brownian motion $\widetilde{W}$  on  $\bigl(\tilde{\Omega},\tilde{\F},\tilde{\Prob},\tilde{\Filtration}\bigr);$
		\item an 
  $\tilde{\Filtration}$-adapted process  $u$ with
$\tilde{\Prob}$-almost all paths in $C([0,\infty);H^s)$, 
	such that for every $t\in [0,T]$
equation \eqref{eqn-ItoFormSolution}
	holds $\tilde{\mathbb{P}}$-almost surely in $H^{-s-1}$. 
 This is a stationary process in $H^s$.
\end{itemize}
\end{definition}


 \begin{proposition}
\label{misura-invariante-per-Galerkin}
Assume  \eqref{sigma+n},  \ref{H1}, \ref{H2}(i), \ref{H3} and \ref{H5bis}.
 Then, for any $n \in \mathbb{N}$,  there exists 
a stationary solution $u_n^{st}$ of the Galerkin equation \eqref{Galerkin} such that, for any $t \ge 0$
\begin{equation}\label{stima-media-infty}
\sup_{n \in \mathbb{N}}\mathbb E \left[ \|u^{st}_n(t)\|_s^p \right] < \infty,
\end{equation}
where $p\in(0,1)$ is the parameter in condition \eqref{seconda-ipotesi-phi}.
Moreover, 
\begin{itemize}
\item for any $T$ and $\delta>0$, there exists $C_{\delta,T}>0$ such that 
\begin{equation}
\label{est_1_bis}
\sup_{n \in \mathbb{N}}\mathbb{P} \left(\|u_n^{st}(t)\|_{L^\infty(0,T;H^s)} \ge C_{\delta,T} \right) \le \delta,
\end{equation}
\item given $0 < \beta < \frac 12$, for any $T>0$ and $\delta>0$ there exists a positive constant $C_{\delta,T, \beta}$ such that 
\begin{equation}
\label{est_2_bis}
\sup_{n \in \mathbb{N}} \mathbb{P}\left( \|u_n^{st}\|_{C^{0, \beta}([0,T];H^{-s-1})}\ge C_{\delta,T, \beta} \right)\le \delta.
\end{equation}
\end{itemize}
\end{proposition}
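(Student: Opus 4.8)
The plan is to construct, for each fixed $n$, an invariant measure $\nu_n$ for the finite-dimensional Galerkin dynamics \eqref{Galerkin} and to take $u_n^{st}$ to be the solution issued from the random initial datum distributed according to $\nu_n$; such a process is automatically stationary. Since \ref{H5bis} implies \ref{H5}, Proposition \ref{p_tight_1} already guarantees that \eqref{Galerkin} generates a global, Feller Markov process on the finite-dimensional space $H_n$, so the only substantial point is to produce the invariant measure together with moment bounds that are \emph{uniform in $n$}.

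The key step is a dissipative Lyapunov estimate obtained by replacing the logarithmic function of Proposition \ref{p_tight_1} by a power function. I would fix a non-decreasing $C^2$ function $l:[0,\infty)\to[0,\infty)$ with $l(\rho)=\rho^p$ for $\rho\ge r$ (with $p,r$ as in \eqref{seconda-ipotesi-phi}) and constant near the origin, and set $\mathscr{V}(u)=l(\|u\|_s)$. Repeating the It\^o computation \eqref{Ito-dt} with this choice, and using that $\Re(u_n,\im Au_n)_s$ vanishes, the contributions of $l'$ and $l''$ combine so that, for $\|u_n\|_s\ge r$,
\begin{equation*}
(\mathscr{L}_n\mathscr{V})(u_n)\le p\|u_n\|_s^{p}\left[|\alpha|K\|u_n\|_{L^\infty}^{2\sigma}+\frac12\frac{\|P_n\phi(u_n)\|_s^2}{\|u_n\|_s^2}-\frac{2-p}{2}\frac{[\Re(u_n,P_n\phi(u_n))_s]^2}{\|u_n\|_s^4}\right].
\end{equation*}
The factor $\frac{2-p}{2}$ is precisely the one in \eqref{seconda-ipotesi-phi}: the extra weight, relative to the bound \eqref{crucial_es}, is exactly the It\^o correction produced by $l''$ for the power function. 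Using $\|P_n\phi(u_n)\|_s\le\|\phi(u_n)\|_s$ together with $(u_n,P_n\phi(u_n))_s=(u_n,\phi(u_n))_s$, assumption \eqref{seconda-ipotesi-phi} with $B<0$ gives $(\mathscr{L}_n\mathscr{V})(u_n)\le pB\|u_n\|_s^p$ on $\{\|u_n\|_s\ge r\}$, while on $\{\|u_n\|_s<r\}$ the quantity is bounded by a constant depending only on $r$ (through \ref{H2}(i) and \eqref{stimaF}, hence uniformly in $n$ since $P_n$ is a contraction). Altogether one obtains, uniformly in $n$,
\begin{equation*}
(\mathscr{L}_n\mathscr{V})(u)\le -c_1\mathscr{V}(u)+c_2,\qquad c_1=-pB>0.
\end{equation*}

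With this drift condition the remaining steps are standard. By Krylov--Bogoliubov, the Lyapunov bound forces tightness of the Ces\`aro time averages of the laws of the Galerkin process (sublevel sets of $\mathscr{V}$ are compact in the finite-dimensional $H_n$), which yields an invariant measure $\nu_n$, and $u_n^{st}$ is the solution started from $\nu_n$. Testing the drift inequality against $\nu_n$ — rigorously via the stopped It\^o formula and monotone convergence, exploiting stationarity — gives $c_1\int\|u\|_s^p\,\nu_n(\mathrm{d}u)\le c_2+c_1r^p$, which is \eqref{stima-media-infty}. Finally, \eqref{est_1_bis} and \eqref{est_2_bis} follow by rerunning the arguments of Propositions \ref{p_tight_1} and \ref{p_tight_2} for $u_n^{st}$: the only new input is a uniform bound on the initial datum. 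For \eqref{est_1_bis} one uses the logarithmic Lyapunov function of Proposition \ref{p_tight_1}, whose expectation at time $0$ equals $\int l(\|u\|_s)\,\nu_n(\mathrm{d}u)$; since that $l$ grows like $\log\rho\lesssim\rho^p$, this is controlled uniformly in $n$ by \eqref{stima-media-infty}, and the Markov inequality then gives \eqref{est_1_bis}. Estimate \eqref{est_2_bis} follows from \eqref{est_1_bis} exactly as Proposition \ref{p_tight_2} follows from \eqref{est_1}, the stopping-time and Sobolev-embedding computation being insensitive to the law of the initial datum.

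I expect the main obstacle to be the Lyapunov computation together with the justification of the moment identity: one must check that the power-function correction produces exactly the $\frac{2-p}{2}$ coefficient of \eqref{seconda-ipotesi-phi}, so that the strict negativity $B<0$ turns into genuine dissipativity, and one must handle carefully the integrability and stopping-time arguments needed to integrate $\mathscr{L}_n\mathscr{V}$ against the invariant measure while keeping every constant independent of $n$.
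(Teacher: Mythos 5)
Your proposal is correct and follows essentially the same route as the paper: a power-$p$ Lyapunov function whose It\^o correction produces exactly the $\frac{2-p}{2}$ coefficient in \eqref{seconda-ipotesi-phi}, a drift bound uniform in $n$ (using that $P_n$ is an $H^s$-contraction), Krylov--Bogoliubov to get the invariant measure and hence the stationary solution, and a rerun of the arguments of Propositions \ref{p_tight_1} and \ref{p_tight_2} with random initial data to obtain \eqref{est_1_bis} and \eqref{est_2_bis}. The only (harmless) deviations are that the paper extracts the uniform moment bound \eqref{stima-media-infty} by passing the time-averaged estimate \eqref{stima-in-media-per-G}, proved for the solution started at $0$, to the limit measure, rather than testing the drift inequality directly against $\nu_n$ as you do, and that for \eqref{est_1_bis} it reuses the power Lyapunov function \eqref{elle2} instead of the logarithmic one with the bound $\log\rho\lesssim\rho^p$.
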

\begin{proof}
We construct a stationary solution appealing to the Krilov-Bogoliubov's theorem, which provides an invariant measure $\mu_n$ for the Galerkin equation \eqref{Galerkin}. Let us consider the
Galerkin equation \eqref{Galerkin} with vanishing initial data $u_n(0)=0$. We denote by $\mathcal{L}(u_n(t;0))$ the law
of this solution process evaluated at time $t>0$. The law is supported on the finite-dimensional space $H_n$. We construct the sequence of time-averaged measures
\begin{equation*}
\frac1T \int_0^T \mathcal{L}(u_n(t;0))\, {\rm d}t, \quad T \in \mathbb{N}.
\end{equation*}
If we show that this sequence is tight in $H_n$, then we can infer the existence of a converging subsequence 
whose limit (as $T_k \rightarrow \infty$) is an invariant measure for the Galerkin equation (see e.g. \cite[Theorem 3.1.1]{DPZ96}).

To prove the tightness we introduce the Lyapunov function
\[
\mathscr{V}(u)=l(\|u\|_{s}),
\]
where $l:[0,+\infty)\to [a,+\infty)$  is a non-decreasing $C^2$-function  such that
\begin{equation}\label{elle2}
\begin{cases}
l(\rho)=a, & 0\le \rho<R\\
l(\rho)=\rho^p, &\rho>2R,
\end{cases}\end{equation}
for some $a \in (0, (r)^p)$ and choosing $R=\frac r2$.
The difference with respect to the function defined in \eqref{elle1} is for large values of $\rho$ (i.e. outside the ball of radius $2R$), where  we have 
\begin{equation}
\label{elle_prime}
l^\prime(\rho)= p \rho^{p-1} \quad 
\text{ and } \quad 
l^{\prime\prime}(\rho)= -p(1-p) \rho^{p-2}.
\end{equation}
Applying the It\^o formula to $\mathscr{V}(u_n)$ we obtain
\begin{equation}
\label{V_2}
{\rm d}\mathscr{V}(u_n(t))= (\mathscr{L}_n \mathscr{V})(u_n(t)) \,{\rm d}t
    + \mathscr{V}^\prime(u_n(t))[P_n \phi(u_n(t))]\,{\rm d}W(t)  .
\end{equation}
As in the proof of Proposition \ref{p_tight_1} we have 
that  $(\mathscr{L}_n \mathscr{V})(u_n) $ vanishes for $\|u_n\|_s<R$ and it is bounded when
$R\le  \|u_n\|_s \le 2R$;
so
\[
\sup_n \sup_{ \|u_n\|_s \le 2R} (\mathscr{L}_n \mathscr{V})(u_n) =:M<\infty.
\]
We have to check the estimates when $ \|u_n\|_{s}> 2R$.
If $ \|u_n\|_{s}> 2R=r$, from estimate \eqref{Ito-dt} and bearing in mind \eqref{elle_prime}, we infer
 \begin{equation}
\label{crucial_est_2}
 (\mathscr{L}_n \mathscr{V}) (u_n) 
 \le
 p  \|u_n\|^p_{s}\left(|\alpha |  K   \|u_n\|_{L^\infty}^{2\sigma}+\frac 12 \frac{\|P_n \phi(u_n)\|_{s}^2}{ \|u_n\|^2_s}
 -\frac{2-p}2\frac{ [\Re \big(u_n,P_n \phi (u_n)\big)_{s}]^2 } {\|u_n\|_{s}^4}\right).
\end{equation}
For $\|u_n\|_s>r$, \eqref{seconda-ipotesi-phi} in Assumption \ref{H5bis}  yields (setting $\tilde B=-B>0$)
\[
 (\mathscr{L}_n \mathscr{V}) (u_n) 
 \le
 -  p \tilde B  \|u_n\|_{s}^p.
\]
Summing up, 
\[
 (\mathscr{L}_n \mathscr{V}) (u_n) \le M -  p \tilde B  \|u_n\|_{s}^p 
 \qquad\forall \ u_n \in H^s
\]
and coming back to  \eqref{V_2} we have
\[
 {\rm d}\mathscr{V}(u_n(t))
 \le ( M -  p \tilde B  \|u_n(t)\|_{s}^p )  \,{\rm d}t
 + \mathscr{V}^\prime(u_n(t))[P_n \phi(u_n(t))]\,{\rm d}W(t).
\]
We now integrate in time; then we  take the expected value on both sides  (this requires to consider first a sequence of stopping times $\rho^n_M$ as in \eqref{rho} in order to deal with the stochastic integral which is  a  local martingale) and obtain
\begin{equation}
\label{KB}
\mathbb E\mathscr{V}(u_n(T)) +p \tilde B\int_0^T \mathbb E \|u_n(t)\|_{s}^p   \,{\rm d}t
\le
\mathscr{V}(u_n(0)) +MT,
\end{equation}
so,  bearing in mind that $\mathscr{V}$ is a non negative function and $\mathscr{V}(u_n(0))=\mathscr{V}(0)=a$,  for any $T\ge 1$ we have
\begin{equation}\label{stima-in-media-per-G}
\frac 1T \int_0^T \mathbb E( \|u_n(t)\|_{s}^p )  \,{\rm d}t
\le
\frac{a + M}{p \tilde B}.
\end{equation}
This provides  tightness  of the sequence $u_n$ in the  $H^s$-norm
and the Krylov-Bogoliubov technique applies 
(since the Markov semigroup associated to the Galerkin system is Feller) 
showing that there exists an invariant measure $\mu_n$ for any Galerkin approximation.

Now we consider the solution of the Galerkin equation with initial datum of law $\mu_n$; this is a stationary solution $u_n^{st}$ to equation \eqref{Galerkin}. 
From \eqref{stima-in-media-per-G} we get the estimate 
\begin{equation}\label{stima-media-mun}
\mathbb E ( \|u^{st}_n(t)\|_s^p ) \le \dfrac{a + M}{p \tilde B} \qquad \forall \ t \ge 0.
\end{equation}
This is a uniform estimate in $n \in \mathbb{N}$ and we infer \eqref{stima-media-infty}.

Estimate \eqref{est_1_bis} can be proved reasoning as in the proof of Proposition \ref{p_tight_1} but considering the Lyapunov function \eqref{elle2} instead of \eqref{elle1}.
Estimate \eqref{est_2_bis} is proved arguing exactly as in the proof of Proposition \ref{p_tight_2}. The only difference is that here we consider random initial data (see Remark \ref{rem_in_data}).
\end{proof}

By taking the limit of the stationary solutions of the Galerkin system \eqref{Galerkin} constructed in Proposition \ref{misura-invariante-per-Galerkin}, we construct  a stationary martingale solution to equation \eqref{NLS_abs}.
\begin{proposition}
\label{pro_sta_sol}
Assume  \eqref{sigma+n}, \ref{H1}, \ref{H2}, \ref{H3} and \ref{H5bis}. Then there exists a  stationary martingale solution 
$
\bigl(\tilde{\Omega},\tilde{\F},\tilde{\mathbb P},\tilde{\Filtration},\widetilde{W},\tilde{u}^{st}\bigr)
$ to \eqref{NLS_abs} with $\widetilde{\mathbb{P}}$-a.s. paths in $C([0,\infty);H^s)$ that satisfy the estimate 
\begin{equation}\label{stima-media-infty_u}
\mathbb E \left[ \|\tilde{u}^{st}(t)\|_s^p \right] < \infty, \quad t \ge 0,
\end{equation}
where $p\in(0,1)$ is the parameter in condition \eqref{seconda-ipotesi-phi}.
\end{proposition}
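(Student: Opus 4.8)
The plan is to reproduce the limiting procedure of Section~\ref{suc_sec_con}, but starting from the \emph{stationary} Galerkin solutions $u_n^{st}$ produced in Proposition~\ref{misura-invariante-per-Galerkin} instead of from the solutions issued from a fixed initial datum. First I would fix $T>0$ and note that the two uniform bounds \eqref{est_1_bis} and \eqref{est_2_bis} are precisely the hypotheses of the tightness criterion Proposition~\ref{prop_tight}, so that the laws of $(u_n^{st})_n$ are tight in $Z_T=C([0,T];H^{s'})\cap C_w([0,T];H^s)$. Since $Z_T$ is not metrizable, I would invoke Jakubowski's version of the Skorohod representation theorem, exactly as in Section~\ref{suc_sec_con}, to obtain on a new probability space processes $(\tilde u_n^{st},\widetilde W_n)$ with the same laws as $(u_n^{st},W)$ and converging $\widetilde{\mathbb P}$-a.s.\ in $Z_T\times C([0,T];\mathbb R)$ to a limit $(\tilde u^{st},\widetilde W)$. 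To cover the whole half-line I would run this argument on each interval $[0,T]$, $T\in\mathbb N$, and extract a diagonal subsequence, so that the convergence holds in $Z_T$ for every $T$.

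With this convergence in hand, the identification of $(\tilde u^{st},\widetilde W)$ as a martingale solution of \eqref{NLS_abs} is \emph{verbatim} the computation already carried out in the non-stationary case: Lemma~\ref{limit} passes to the limit in the deterministic integrals (in $H^{s'}$) and, together with \cite[Lemmas~4.1--4.3]{BagMauXu2023} and Lemma~\ref{limit}(iv), in the stochastic integral (in $H^{-s-1}$), so that the limit satisfies \eqref{eqn-ItoFormSolution}. The upgrade of the paths from $C([0,\infty);H^{s'})\cap C_w([0,\infty);H^s)$ to $C([0,\infty);H^s)$ is then obtained exactly as in Proposition~\ref{reg_Str_est}, by writing the mild formulation \eqref{mild_form} and using the bound \eqref{stimaF} on $F$ together with Assumption~\ref{H2}(i) on $\phi$. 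None of these steps uses stationarity, so none of them needs to be redone.

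The genuinely new points, where I would concentrate the argument, are the two properties specific to this statement. For the moment bound \eqref{stima-media-infty_u} I would use that $\tilde u_n^{st}(t)\to\tilde u^{st}(t)$ \emph{weakly} in $H^s$ (a consequence of convergence in $Z_T$), so that by weak lower semicontinuity of the norm $\|\tilde u^{st}(t)\|_s\le\liminf_n\|\tilde u_n^{st}(t)\|_s$ holds $\widetilde{\mathbb P}$-a.s.; applying $x\mapsto x^p$ and Fatou's lemma together with the uniform estimate \eqref{stima-media-mun} yields $\mathbb E[\|\tilde u^{st}(t)\|_s^p]\le (a+M)/(p\tilde B)<\infty$ for every $t\ge0$. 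For stationarity I would argue at the level of finite-dimensional distributions: each $u_n^{st}$, and hence each $\tilde u_n^{st}$, is stationary, so for any times $t_1<\dots<t_k$, any shift $h\ge0$ and any bounded continuous $G:(H^{s'})^k\to\mathbb R$ one has $\mathbb E[G(\tilde u_n^{st}(t_1+h),\dots,\tilde u_n^{st}(t_k+h))]=\mathbb E[G(\tilde u_n^{st}(t_1),\dots,\tilde u_n^{st}(t_k))]$; letting $n\to\infty$ and using the $\widetilde{\mathbb P}$-a.s.\ convergence in $Z_T$ (which gives pointwise strong convergence in $H^{s'}$) transfers this equality to $\tilde u^{st}$, which is therefore stationary. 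As in the non-stationary construction, the main obstacle is the passage to the limit in the stochastic integral, which forces working in the weak space $H^{-s-1}$ and relying on Assumption~\ref{H2}(ii) through Lemma~\ref{limit}(iv); the only additional subtlety here is that the convergence to $\tilde u^{st}(t)$ in $H^s$ is merely weak, so the moment bound must be obtained by lower semicontinuity and Fatou rather than by direct passage to the limit.
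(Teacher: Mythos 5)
Your proposal follows essentially the same route as the paper's proof: tightness of the stationary Galerkin solutions in $Z_T$ via \eqref{est_1_bis}, \eqref{est_2_bis} and Proposition \ref{prop_tight}; the Jakubowski--Skorohod representation; identification of the limit exactly as in Proposition \ref{prop_ex_mar} (with random initial data, cf.\ Remark \ref{rem_in_data}); the path upgrade as in Proposition \ref{reg_Str_est}; the moment bound inherited from \eqref{stima-media-mun}; and stationarity transferred through the almost sure convergence. Your treatment of \eqref{stima-media-infty_u} by weak lower semicontinuity of the $H^s$-norm plus Fatou is in fact more explicit than the paper's one-line ``inherited from \eqref{stima-media-infty}'', and is correct since the Skorohod copies have the same laws as the $u_n^{st}$.

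Two small points need repair. First, your globalization step --- applying Jakubowski separately on each $[0,T]$, $T\in\mathbb{N}$, and then extracting a diagonal subsequence --- does not work as literally stated: each application of the representation theorem produces a \emph{new} probability space and new copies of the processes, so the limits obtained on different intervals need not live on a common space nor be mutually consistent. The paper avoids this by observing (Remark \ref{rem_inf}) that tightness in $Z_k$ for every $k\in\mathbb{N}$ is equivalent to tightness in $Z_\infty$, and then applying Jakubowski once in $Z_\infty\times C([0,\infty);\mathbb{R})$; your per-$T$ tightness is exactly the required input, so the fix is immediate. Second, your finite-dimensional-distribution argument, which uses bounded continuous functionals on $(H^{s'})^k$ and the pointwise strong convergence in $H^{s'}$, yields stationarity of $\tilde u^{st}$ as an $H^{s'}$-valued process; to conclude stationarity in $H^s$, as Definition \ref{def-sta-martingale solution} demands, you must add that the Borel $\sigma$-algebra of $H^s$ coincides with the trace on $H^s$ of that of $H^{s'}$ (equivalently, as the paper puts it, the weak and the strong Borel subsets of $H^s$ coincide), so that the $H^{s'}$-valued laws determine the $H^s$-valued ones. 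With these two standard adjustments your argument coincides with the paper's proof.
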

\begin{proof}
Let $u_n^{st}$ be the stationary solution of the Galerkin equation \eqref{Galerkin} given in Proposition \ref{misura-invariante-per-Galerkin}.
From the estimates \eqref{est_1_bis} and \eqref{est_2_bis} the sequence of laws $\{\mathcal{L}(u_n^{st})\}_{n \in \mathbb{N}}$ is tight in $Z_T$, for any $T>0$, thanks to Proposition \ref{prop_tight}. In view of Remark \ref{rem_inf} the sequence is tight in $Z_\infty$. Reasoning as in Section \ref{suc_sec_con} (see Proposition \ref{prop_ex_mar}), but bearing in mind that here we are dealing with random initial data (see Remark \ref{rem_in_data}), we construct a martingale solution to \eqref{NLS_abs}. This is a stationary process in $H^s$, in fact the weak and the strong Borel subsets of $H^s$ coincide. 
Therefore stationarity in $H^s$ is a consequence of the stationarity of the Galerkin sequence and  its 
convergence  in $C_w([0,+\infty);H^s)$. Reasoning as in the proof of Proposition \ref{reg_Str_est} one infers that $\widetilde{\mathbb{P}}$-a.s. $\tilde{u}^{st}$ has trajectories in $C([0,\infty);H^s)$.

The estimate \eqref{stima-media-infty_u} is inherited from the same estimate \eqref{stima-media-infty} for the Galerkin sequence.
\end{proof}

 \subsection{Existence of invariant measures}
 \label{ex_inv_meas_sec}

Under Assumptions \ref{H1}-\ref{H5bis} for any initial datum $x \in H^s$, there exists a unique global-in-time strong solution to \eqref{NLS_abs} with $\mathbb{P}$-a.a. paths in $C([0,\infty);H^s)$. This result follows from section
 \ref{S-wellposedness}, since Assumption \ref{H5bis} is stronger than Assumption \ref{H5}.

We denote the unique solution starting from $x$ by $u(t;x)$. We define the family of operators  $P:=\{P_t\}_{t\ge 0}$ associated to equation \eqref{NLS_abs} as 
\begin{equation}
\label{P_t}
(P_t\varphi)(x):= \mathbb{E}[ \varphi(u(t;x))], \quad x\in H^s,\quad \varphi\in \mathcal{B}_b(H^s),
\end{equation}
where $\mathcal{B}_b(H^s)$ is the space of Borel measurable bounded functions from $H^s$ to $\mathbb{R}$. 
$P_t\varphi$ is bounded 
for every $\varphi \in\mathcal{B}_b(H^s)$. We know from \cite[Cor.~23]{On05}
that the transition function is jointly measurable, that is for any $A\in\cB(H^s)$ 
the map $H^s \times [0,\infty)\ni  (x,t)\mapsto \mathbb{P}\{u(t;x)\in A\}\in \mathbb{R}$ is measurable. So $P_t\varphi$ is also measurable for every $\varphi \in \mathcal{B}_b(H^s)$, 
hence $P_t$ maps $\mathcal{B}_b(H^s)$ into itself for every $t \ge0$.
Furthermore, since the unique solution of \eqref{NLS_abs} is an $H^s$-valued continuous process, then it is also a Markov process, see \cite[Theorem 27]{On05}. 
Therefore, we deduce that the family of operators $\{P_t\}_{t\ge0}$ is a Markov semigroup, 
namely $P_{t+s}=P_tP_s$ for any $s,t\ge0$. 

\begin{definition}
  An invariant measure for the transition semigroup $P$ 
  is a probability measure on $(H^s, \cB(H^s))$ such that 
  \[
  \int_{H^s} \varphi(x)\,\mu({\rm d} x) = \int_{H^s} P_t\varphi(x)\,\mu({\rm d} x) \quad\forall\,t\geq0,\quad\forall\,\varphi\in \mathcal{B}_b(H^s).
  \]
\end{definition}

 \begin{theorem}
\label{misura-invariante}
Assume  \eqref{sigma+n} and \ref{H1}-\ref{H5bis}. 
Then there exists at least one invariant measure $\mu$  for equation \eqref{NLS_abs} such that 
\begin{equation}
\label{est_mu}
\int_{H^s} \|x\|_s^p {\rm d}\mu(x)<\infty,
\end{equation}
where $p\in (0,1)$ is the parameter in condition \eqref{seconda-ipotesi-phi}.
\end{theorem}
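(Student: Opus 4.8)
The plan is to obtain the invariant measure as the time-marginal of the stationary martingale solution already constructed in Proposition \ref{pro_sta_sol}, and then to identify this marginal as invariant for the transition semigroup $\{P_t\}_{t\ge0}$ defined in \eqref{P_t} by invoking pathwise uniqueness. The heavy analytic work (the Lyapunov estimates, the tightness, and the construction of a genuinely stationary process) has already been done; what remains is to connect the abstract stationary solution to the Markovian dynamics.

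First I would take the stationary martingale solution $\bigl(\tilde{\Omega},\tilde{\F},\tilde{\mathbb P},\tilde{\Filtration},\widetilde{W},\tilde{u}^{st}\bigr)$ from Proposition \ref{pro_sta_sol} and set $\mu:=\mathcal{L}(\tilde{u}^{st}(0))$, the law of the process at time zero. Since $\tilde{u}^{st}$ is a stationary process in $H^s$ with $\widetilde{\mathbb{P}}$-a.s. paths in $C([0,\infty);H^s)$, the measure $\mu$ is a Borel probability measure supported in $H^s$, and $\mathcal{L}(\tilde{u}^{st}(t))=\mu$ for every $t\ge0$. The moment bound \eqref{est_mu} is then immediate, since $\int_{H^s}\|x\|_s^p\,\mu({\rm d}x)=\mathbb{E}[\|\tilde{u}^{st}(0)\|_s^p]<\infty$ by \eqref{stima-media-infty_u}.

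The key step is to verify that $\mu$ is invariant for $\{P_t\}$, that is $\int_{H^s}P_t\varphi\,{\rm d}\mu=\int_{H^s}\varphi\,{\rm d}\mu$ for all $\varphi\in\mathcal{B}_b(H^s)$ and $t\ge0$. Here I would use that, under \ref{H1}--\ref{H5bis}, pathwise uniqueness holds (Proposition \ref{path_uniq_prop}), so by the Yamada--Watanabe principle uniqueness in law holds as well; consequently the conditional law of the path $\tilde{u}^{st}$ given $\tilde{u}^{st}(0)=x$ coincides with the law of the strong solution $u(\cdot;x)$, whence $\mathbb{E}\bigl[\varphi(\tilde{u}^{st}(t))\mid \tilde{u}^{st}(0)=x\bigr]=\mathbb{E}[\varphi(u(t;x))]=P_t\varphi(x)$. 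Disintegrating with respect to the initial law $\mu$ and using stationarity then yields
\begin{equation*}
\int_{H^s}P_t\varphi(x)\,\mu({\rm d}x)=\int_{H^s}\mathbb{E}[\varphi(u(t;x))]\,\mu({\rm d}x)=\mathbb{E}\bigl[\varphi(\tilde{u}^{st}(t))\bigr]=\int_{H^s}\varphi\,{\rm d}\mu,
\end{equation*}
which is exactly the invariance of $\mu$.

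I expect the main obstacle to be the rigorous justification of the disintegration identity $\mathcal{L}\bigl(\tilde{u}^{st}(t)\mid\tilde{u}^{st}(0)=x\bigr)=\mathcal{L}(u(t;x))$, i.e. the upgrade from pathwise uniqueness to uniqueness in law together with the joint measurability of $x\mapsto\mathcal{L}(u(t;x))$. The latter is already recorded through \cite{On05} when the semigroup $\{P_t\}$ is set up in Section \ref{ex_inv_meas_sec}, and the Markov property established there makes the conditional-expectation computation legitimate; once uniqueness in law is in hand, the remaining argument is routine. It is worth noting that this route sidesteps a direct Krylov--Bogoliubov tightness argument at the level of $H^s$ (where balls are not compact), since the needed compactness has been absorbed into the passage to the limit of the Galerkin stationary solutions in Proposition \ref{pro_sta_sol}.
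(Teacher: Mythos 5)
Your proposal is correct and follows essentially the same route as the paper: the paper also takes the time-marginal of the stationary martingale solution from Proposition \ref{pro_sta_sol} as the invariant measure, identifies it as invariant for the semigroup $\{P_t\}_{t\ge0}$ (well defined by the existence and pathwise uniqueness results of Section \ref{S-wellposedness}), and deduces \eqref{est_mu} from \eqref{stima-media-infty_u}. The Yamada--Watanabe/disintegration step you spell out is exactly the justification the paper leaves implicit in its one-line proof.
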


\begin{proof}
Since the transition semigroup $P$ given in \eqref{P_t} is well defined, the law at any given time of a stationary 
 martingale solution given in Proposition \ref{pro_sta_sol} is an invariant measure for equation \eqref{NLS_abs}. 
 Estimate \eqref{est_mu} follows from estimate \eqref{stima-media-infty_u}.
\end{proof}

\begin{remark} 
In the estimate \eqref{est_mu} the power $p$ is smaller than 1. This 
is different from the estimates of the $p$-moments of the $H^1$-norm, which can be obtained for larger $p$ when working in the energy space $H^1$ for the solution to the nonlinear Schr\"odinger equation with  an at most linear noise
 (see e.g. \cite{BFZ24}, \cite{FZ}).
\end{remark}

 \subsection{Uniqueness of the invariant measure and asymptotic stability of the zero solution}
\label{uni_inv_meas_sec}

Since Assumption  \ref{H5bisbis}  is stronger than  \ref{H5bis} and  \ref{H5}, from the previous sections we obtain the following result: 
under Assumptions \ref{H1}-\ref{H5bisbis} for any initial datum $x \in H^s$, there exists a unique global-in-time strong solution to \eqref{NLS_abs} with $\mathbb{P}$-a.s. paths in $C([0,\infty);H^s)$.
In addition, we now prove that 
all the solutions converge to $0$, that is the noise perturbation preserves the equilibrium solution $u=0$  of the NLS equation \eqref{NLS_det}. 
Moreover, we prove some asymptotic stability results for the zero solution in the spirit of \cite{BrzMasSei}. 
It then follows that  the invariant measure is unique and coincides with $\delta_0$.

\begin{theorem}
\label{uniq_inv_thm}
 Assume  \eqref{sigma+n} and \ref{H1}-\ref{H5bisbis}. Then for any $u^0\in H^s$ 
\begin{itemize}
    \item [i)] the zero solution to \eqref{NLS_abs} is exponentially stable in the $p$-mean, where $p\in (0,1)$ is the parameter in condition \eqref{terza-ipotesi-phi}, that is there exist constants $C< \infty$, $\lambda>0$, such that 
\begin{equation*}
        \mathbb{E}[ \|u(t)\|^p_s] \le C e^{-\lambda t}\|u^0\|^p_s, \qquad \forall \ t \ge 0;
\end{equation*}
\item [ii)] the zero solution to \eqref{NLS_abs} is exponentially stable with probability one, that is, for any $\bar \lambda \in (0,\lambda)$, there exists a $\mathbb{P}$-a.s. finite random time $\tau_0$ such that 
\begin{equation*}
\|u(t)\|^p_s \le C e^{-\bar \lambda t}\|u_0\|^p_s, \qquad \forall \ t \ge \tau_0, \quad \mathbb{P}-\text{a.s}.;
\end{equation*}
\item [iii)] $\mu=\delta_0$ is the unique invariant measure for equation \eqref{NLS_abs}.
\end{itemize}

\end{theorem}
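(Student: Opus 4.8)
The plan is to run a Lyapunov argument with the power function $\mathscr{V}(u)=\|u\|_s^p$, where $p\in(0,1)$ is the exponent in \eqref{terza-ipotesi-phi}. The crucial point is that under \ref{H5bisbis} the dissipativity estimate \eqref{terza-ipotesi-phi} holds on \emph{all} of $H^s$, not merely outside a ball. Specializing the Itô computation \eqref{Ito-dt} to $l(\rho)=\rho^p$, exactly as in \eqref{crucial_est_2} but now without the projector and for every $u$, and inserting the structural form $\phi(u)=f(u)u$ (so that $\|\phi(u)\|_s^2/\|u\|_s^2=|f(u)|^2$ and $\Re(u,\phi(u))_s/\|u\|_s^2=\Re f(u)$) together with the bound $|\Im(u,\alpha F(u))_s|\le|\alpha|K\|u\|_{L^\infty}^{2\sigma}\|u\|_s^2$ coming from \eqref{stimaF}, I would obtain
\[
(\mathscr{L}\mathscr{V})(u)\le p\|u\|_s^p\Big(|\alpha|K\|u\|_{L^\infty}^{2\sigma}+\tfrac12|f(u)|^2-\tfrac{2-p}2[\Re f(u)]^2\Big)\le pB\,\|u\|_s^p,
\]
valid for all $u\in H^s$ with $B<0$. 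Setting $\lambda:=-pB>0$, this reads $(\mathscr{L}\mathscr{V})(u)\le-\lambda\mathscr{V}(u)$.

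For part (i) I would apply Itô's formula to $e^{\lambda t}\|u(t)\|_s^p$. Since $p<1$, the map $u\mapsto\|u\|_s^p$ is of class $C^2$ only on $H^s\setminus\{0\}$, so I would localize with the stopping times $\tau_M=\inf\{t:\|u(t)\|_s\ge M\}$ and $\tau_\epsilon=\inf\{t:\|u(t)\|_s\le\epsilon\}$; on $[0,\tau_M\wedge\tau_\epsilon]$ the function is smooth along the path, the drift of $e^{\lambda t}\|u(t)\|_s^p$ equals $(\lambda+pB)e^{\lambda t}\|u\|_s^p\le 0$, and the stochastic integral (whose integrand is $p\|u\|_s^p\Re f(u)$) is a genuine martingale. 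Taking expectations gives $\mathbb{E}[e^{\lambda(t\wedge\tau_M\wedge\tau_\epsilon)}\|u(t\wedge\tau_M\wedge\tau_\epsilon)\|_s^p]\le\|u^0\|_s^p$; letting $M\to\infty$ (the solution is global by Theorem \ref{mainTH}, so $\tau_M\to\infty$), then $\epsilon\to0$, and invoking Fatou's lemma yields $\mathbb{E}[\|u(t)\|_s^p]\le e^{-\lambda t}\|u^0\|_s^p$, i.e. (i) with $C=1$.

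For part (ii), combining the Markov property with (i) shows that $M(t):=e^{\lambda t}\|u(t)\|_s^p$ is a nonnegative supermartingale, since for $r\le t$ one has $\mathbb{E}[e^{\lambda t}\|u(t)\|_s^p\mid\mathcal F_r]\le e^{\lambda t}e^{-\lambda(t-r)}\|u(r)\|_s^p=e^{\lambda r}\|u(r)\|_s^p$. By Doob's supermartingale convergence theorem $M(t)$ converges a.s. to a finite limit, and since $t\mapsto M(t)$ is continuous this forces $\Xi:=\sup_{t\ge0}M(t)<\infty$ a.s.; hence $\|u(t)\|_s^p\le\Xi e^{-\lambda t}$, and for any $\bar\lambda\in(0,\lambda)$ the factor $\Xi e^{-(\lambda-\bar\lambda)t}\to0$, so there is an a.s. finite random time $\tau_0$ beyond which $\|u(t)\|_s^p\le Ce^{-\bar\lambda t}\|u^0\|_s^p$. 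For part (iii), note that $u\equiv0$ solves \eqref{NLS_abs} (as $F(0)=0$ and $\phi(0)=f(0)\cdot0=0$), so $\delta_0$ is invariant. For uniqueness, fix $\varphi\in C_b(H^s)$: by (i) and the Chebyshev inequality $u(t;x)\to0$ in probability in $H^s$, whence $P_t\varphi(x)=\mathbb{E}[\varphi(u(t;x))]\to\varphi(0)$ by bounded convergence; if $\mu$ is invariant then $\int\varphi\,d\mu=\int P_t\varphi\,d\mu$ for all $t$, and passing to the limit (dominating by $\|\varphi\|_\infty$) gives $\int\varphi\,d\mu=\varphi(0)$. Since $C_b(H^s)$ separates Borel probability measures on the separable space $H^s$, this forces $\mu=\delta_0$.

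The main obstacle is the lack of $C^2$-regularity of $\|\cdot\|_s^p$ at the origin, which is what makes the double localization necessary ($\tau_M$ to keep the stochastic integral a true martingale, $\tau_\epsilon$ to stay away from $0$) and requires care in the limit $\epsilon\to0$: one must verify that the contribution of trajectories reaching or approaching $0$ before time $t$ vanishes, which holds because on that event $\|u(t\wedge\tau_\epsilon)\|_s=\epsilon$ and $e^{\lambda\tau_\epsilon}\epsilon^p\to0$, consistently with $u$ remaining at $0$ once it is reached.
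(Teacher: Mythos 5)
Your overall strategy is the paper's: a Lyapunov/supermartingale argument for $e^{\lambda t}\|u(t)\|_s^p$ exploiting that \ref{H5bisbis} makes the drift in the It\^o expansion strictly negative on all of $H^s$ (part (i)), a Doob-type maximal argument for pathwise exponential stability (part (ii)), and uniqueness of the invariant measure via $P_t\varphi(u^0)\to\varphi(0)$ for $\varphi\in\mathcal{C}_b(H^s)$ together with a measure-determining class (part (iii)). Your variants in (ii) and (iii) are legitimate and even slightly cleaner: you get the supermartingale property from (i) plus the Markov property and then invoke the supermartingale maximal/convergence theorem, where the paper (Lemma \ref{lambda}) establishes the supermartingale property directly from the It\^o inequality and then runs Doob's inequality on the intervals $[k,k+1]$ with Borel--Cantelli; and in (iii) you use that $\mathcal{C}_b(H^s)$ separates Borel probability measures on a metric space, where the paper tests against characteristic functionals $\varphi(u)=e^{\im(u,h)_s}$. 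Incidentally, your stochastic integrand $p\|u\|_s^p\,\Re f(u)$ is the correct one (the paper's \eqref{es_uni_3} displays $[\Re f(u)]^2$, an inconsequential typo).

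The one substantive divergence is how you handle the failure of $C^2$-regularity of $\|\cdot\|_s^p$ at the origin, and it is exactly there that your proposal has a gap. The paper smooths the \emph{function}, working with $\mathscr{V}_\varepsilon(x)=(\|x\|_s^2+\varepsilon)^{p/2}$, which is $C^2$ everywhere, and passes $\varepsilon\to0$ by dominated convergence plus convergence in probability of the stochastic integrals; no behaviour at the origin ever needs to be controlled. You instead localize the \emph{path} with $\tau_\epsilon=\inf\{t:\|u(t)\|_s\le\epsilon\}$, and your stopped estimate $\mathbb{E}\bigl[e^{\lambda(t\wedge\tau_M\wedge\tau_\epsilon)}\|u(t\wedge\tau_M\wedge\tau_\epsilon)\|_s^p\bigr]\le\|u^0\|_s^p$ says nothing about $\|u(t)\|_s$ on the event $\{\tau_\epsilon\le t\}$: after hitting the level $\epsilon$ the solution may climb back up, so the observation that $e^{\lambda\tau_\epsilon}\epsilon^p\to0$ does not by itself kill that contribution. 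What Fatou actually yields is $\mathbb{E}\bigl[\mathbbm{1}_{\{\tau_0>t\}}e^{\lambda t}\|u(t)\|_s^p\bigr]\le\|u^0\|_s^p$, where $\tau_0$ is the hitting time of $0$, and to conclude you need precisely the assertion you state without proof, namely that $0$ is absorbing: once $u$ reaches $0$ it stays there. This is true, but it requires pathwise uniqueness restarted at the \emph{random} time $\tau_0$ (note $u\equiv0$ solves \eqref{NLS_abs} since $F(0)=0$ and $\phi(0)=f(0)\cdot0=0$), i.e.\ an adaptation of Lemma \ref{tec_lem} and the Gronwall argument of Proposition \ref{path_uniq_prop} to the interval $[\tau_0\wedge T,T]$, or an appeal to the strong Markov property plus uniqueness. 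You should either supply this restart argument explicitly or, more economically, replace the $\tau_\epsilon$-localization by the paper's $\varepsilon$-regularization of the norm, which makes the issue disappear. (A further point, at the same level of rigour as the paper itself: applying It\^o to $t\mapsto\|u(t)\|_s^p$ for the limit equation, whose drift $\im Au$ only lives in $H^{s-2}$, tacitly requires a space regularization as in Appendix \ref{tec_lem_sec}; the paper glosses this in Lemma \ref{lambda} as well, so it is not a defect specific to your write-up.)
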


The proof is based on the following auxiliary result.
\begin{lemma}
\label{lambda}
 Assume  \eqref{sigma+n} and \ref{H1}-\ref{H5bisbis}. 
Then there exists a constant $\lambda>0$ such that the process $\{e^{\lambda t}\|u(t)\|^p_s\}_{t \ge 0}$ is a non-negative continuous supermartingale; here $p\in (0,1)$ is the parameter in condition \eqref{terza-ipotesi-phi}.
\end{lemma}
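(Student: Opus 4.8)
The plan is to run the Lyapunov computation of \eqref{Ito-dt}--\eqref{crucial_est_2} with the \emph{genuine} function $\mathscr{V}(u)=\|u\|_s^p$ along the solution $u$ of \eqref{NLS_abs}, exploiting that under \ref{H5bisbis} the key estimate \eqref{terza-ipotesi-phi} holds on all of $H^s$ (not merely outside a ball as in \ref{H5bis}), and then to multiply by $e^{\lambda t}$ with a suitable $\lambda>0$ to kill the remaining drift.

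First I would record the Itô dynamics of the scalar process $X(t):=\|u(t)\|_s^2$. Since $u$ solves \eqref{NLS_abs} only in $H^{-s-1}$ but has paths in $C([0,\infty);H^s)$, I invoke the technical Itô lemma (Lemma \ref{tec_lem}, as in the pathwise-uniqueness proof) to get
\[
dX(t)=\big[2\Re\big(u,-\im\alpha F(u)\big)_s+\|\phi(u)\|_s^2\big]\,dt+2\Re\big(u,\phi(u)\big)_s\,dW,
\]
the term $2\Re(u,\im Au)_s$ vanishing as in \eqref{prodotto-reale}. Because $g(r)=r^{p/2}$ is $C^2$ on $(0,\infty)$ and $\mathscr{V}(u)=g(X)$, on any time interval where $\|u\|_s$ stays strictly positive the scalar Itô formula applied to $g(X)$ reproduces exactly the drift $(\mathscr{L}\mathscr{V})(u)$ of \eqref{Ito-dt}, namely
\[
(\mathscr{L}\mathscr{V})(u)\le p\|u\|_s^p\Big(|\alpha|K\|u\|_{L^\infty}^{2\sigma}+\tfrac12\tfrac{\|\phi(u)\|_s^2}{\|u\|_s^2}-\tfrac{2-p}{2}\tfrac{[\Re(u,\phi(u))_s]^2}{\|u\|_s^4}\Big).
\]
Plugging in $\phi(u)=f(u)u$ (so $\|\phi(u)\|_s=|f(u)|\,\|u\|_s$ and $\Re(u,\phi(u))_s=\Re f(u)\,\|u\|_s^2$) and using \eqref{terza-ipotesi-phi} yields $(\mathscr{L}\mathscr{V})(u)\le pB\|u\|_s^p=-\lambda\mathscr{V}(u)$ with $\lambda:=-pB>0$, for every $u\neq0$. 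Hence the drift of $e^{\lambda t}\mathscr{V}(u(t))$ is $e^{\lambda t}[\lambda\mathscr{V}(u)+(\mathscr{L}\mathscr{V})(u)]\le0$.

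The hard part will be the lack of smoothness of $u\mapsto\|u\|_s^p$ at the origin: for $p\in(0,1)$ the radial derivatives of $g$ blow up at $0$, so the Itô formula above is licit only away from $u=0$. To handle this I would localize to an annulus: for $0<\delta<M$ set $\tau_\delta:=\inf\{t:\|u(t)\|_s\le\delta\}$, $\tau^M:=\inf\{t:\|u(t)\|_s\ge M\}$ and $\theta_{\delta,M}:=\tau_\delta\wedge\tau^M$. On $[0,\theta_{\delta,M}]$ the path stays in $\{\delta\le\|u\|_s\le M\}$, where $g$ is smooth with bounded derivatives and $\phi$ is bounded on balls by \ref{H2}; thus the stochastic integral in the expansion is a genuine martingale and the stopped process $e^{\lambda(t\wedge\theta_{\delta,M})}\mathscr{V}(u(t\wedge\theta_{\delta,M}))$ is a nonnegative supermartingale.

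Finally I would pass to the limit $M\to\infty$, $\delta\to0$. Since $u\in C([0,\infty);H^s)$ we have $\tau^M\to\infty$ a.s., while $\tau_\delta$ increases to the hitting time $\tau_0:=\inf\{t:u(t)=0\}$. As $F(0)=0$ and $\phi(0)=f(0)\cdot0=0$, the constant path $0$ solves \eqref{NLS_abs}, so by pathwise uniqueness (Proposition \ref{path_uniq_prop}) the origin is absorbing; hence $\mathscr{V}(u(t))=0$ for $t\ge\tau_0$, and by continuity of $\mathscr{V}$ the stopped processes converge pathwise to $e^{\lambda t}\mathscr{V}(u(t))$. Applying the conditional Fatou lemma to these nonnegative stopped supermartingales then gives $\E[e^{\lambda t}\mathscr{V}(u(t))\mid\mathcal F_\tau]\le e^{\lambda\tau}\mathscr{V}(u(\tau))$ for $\tau\le t$; taking $\tau=0$ also yields $\E[\|u(t)\|_s^p]\le e^{-\lambda t}\|u^0\|_s^p<\infty$, which furnishes integrability. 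This shows $\{e^{\lambda t}\|u(t)\|_s^p\}_{t\ge0}$ is a nonnegative continuous supermartingale, and simultaneously produces the decay rate used in part (i) of Theorem \ref{uniq_inv_thm}.
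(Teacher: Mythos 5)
Your proof is correct, but it takes a genuinely different route from the paper's. The paper regularizes the \emph{function}: it works with the globally smooth approximation $\mathscr{V}_\varepsilon(x)=(\|x\|_s^2+\varepsilon)^{p/2}$, applies the It\^o formula, uses \ref{H5bisbis} to make the drift of $e^{\lambda t}\mathscr{V}_\varepsilon(u(t))$ nonpositive (up to an $O(\varepsilon)$ error), and then passes to the limit $\varepsilon\to 0$ term by term (dominated convergence for the Lebesgue integrals, $L^2$-convergence along a subsequence for the stochastic integral), finally obtaining the supermartingale inequality by localizing the resulting stochastic integral with stopping times $\tau_N$. You instead regularize the \emph{domain}: you keep the true function $\|u\|_s^p$, localize on the annulus $\{\delta\le\|u\|_s\le M\}$ where $r\mapsto r^{p/2}$ is smooth, and handle the singularity at the origin via the absorbing-state property of $0$, closing the argument with conditional Fatou along $\delta\to0$, $M\to\infty$. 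Your drift computation and the choice $\lambda=-pB$ match the paper's \eqref{es_uni_2} exactly, so the analytic core is the same; what differs is the limiting mechanism. Your route avoids all $\varepsilon$-convergence checks, but it buys two obligations the paper's route sidesteps entirely. First, Lemma \ref{tec_lem} cannot be cited verbatim for the It\^o dynamics of $\|u(t)\|_s^2$: it is stated for the difference of two solutions \emph{with the same initial data}, so taking $u_2\equiv 0$ is not covered unless $u^0=0$; you need the single-solution analogue, which does follow by repeating the $R_\lambda$-regularization in its proof word for word, but this should be said. Second, the absorption claim needs pathwise uniqueness restarted at the stopping time $\tau_0$ with the (random) initial datum $0$, whereas Proposition \ref{path_uniq_prop} is formulated for deterministic data at time zero; the Gronwall argument there does extend to this situation (note also that under \ref{H5bisbis} one indeed has $\phi(0)=f(0)\cdot 0=0$ and $F(0)=0$, so the zero path is a solution), but again the extension is an extra step, not a citation. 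With these two points spelled out, your argument is complete and, as you note, it delivers the decay estimate $\mathbb{E}[\|u(t)\|_s^p]\le e^{-\lambda t}\|u^0\|_s^p$ of Theorem \ref{uniq_inv_thm}(i) in the same stroke, just as the paper's proof does.
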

\begin{proof}
    Let $u$ be the unique solution to \eqref{NLS_abs} starting from $u^0 \in H^s$. Since the function $x \mapsto \|x\|^p_s$ is not smooth enough for $p\in (0,1)$ to apply the It\^o formula we use an approximation. For  $\varepsilon >0$ we introduce the function 
\[
\mathscr{V}(x):=(\|x\|_{s}^2+\varepsilon)^{p/2},
\]
for $p\in (0,1)$ as in \eqref{terza-ipotesi-phi}. 
This is a smooth function and we compute 
\begin{equation} 
\mathscr{V}^\prime(x)[h] =p  (\|x \|_{s}^2+\varepsilon)^{\frac p2-1}\Re\big( x, h\big)_{s}
\end{equation}
and 
\begin{equation}
\mathscr{V}^{\prime \prime}(x)[h,k]=
p(p-2)( \|x\|_{s}^2+\varepsilon)^{\frac p2-2}\Re\big(x, h\big)_{s}\Re\big(x, k\big)_{s}
+p(\|x\|_{s}^2+\varepsilon)^{\frac p2-1} \Re\big( k, h\big)_{s}.
\end{equation}
We apply the It\^o formula to the process $X(r)=e^{\lambda r}\mathscr{V}(u(r))$ when $r \in [t_0,t]$ with $0\le t_0<t$.
We have 
\begin{equation}
\label{es_uni_1}
{\rm d}(e^{\lambda t}\mathscr{V}(u(t))) = \lambda e^{\lambda t}\mathscr{V}(u(t)) {\rm d}t+e^{\lambda t}{\rm d}\mathscr{V}(u(t)),
\end{equation}
where
\begin{equation} \label{ITO_n}
{\rm d}\mathscr{V}(u(t))
=(\mathscr{L}\mathscr{V})(u(t)) \,{\rm d}t
    + \mathscr{V}^\prime(u(t))[\phi(u(t))]\,{\rm d}W(t)  ,
\end{equation}
and  $(\mathscr{L}\mathscr{V})(u(t))$ is given by 
\begin{equation*}
 (\mathscr{L}\mathscr{V})(u(t))
 =
 \mathscr{V}^\prime(u(t))[\im A u(t) - \im \alpha (F(u(t)))] + \frac 12 \mathscr{V}^{\prime \prime}(u(t))
[\phi(u(t)),  \phi(u(t))].
\end{equation*}
Arguing in the proof of Proposition \ref{p_tight_1} and bearing in mind that $\phi(u)=f(u)u$  from Assumption \ref{H5bisbis},  we have that 
\[
\Re \big(u(t), \phi (u(t))\big)_{s}=\|u(t)\|_s^2\Re f(u(t))
\]
and
\[
\|\phi(u(t))\|_{s} =\|u(t)\|_s |f(u(t))|.
\]
We obtain the estimate
\begin{equation}
\label{es_uni_2}
\begin{split}
 (\mathscr{L}\mathscr{V})(u(t))
 &=
p  (\|u(t)\|_{s}^2+\varepsilon)^{\frac p2-1} 
 \Im \big( u(t), \alpha  (F(u(t)))\big)_{s}
 +\frac{p(p-2)}{2}\frac{\|u(t)\|_{s}^4}{ (\|u(t)\|_{s}^2+\varepsilon)^{2} } (\|u(t)\|_{s}^2+\varepsilon)^{\frac p2} [\Re f(u(t))]^2
 \\
 & \qquad + \frac p2 (\|u(t)\|_{s}^2+\varepsilon)^{\frac p2} \frac{\|u(t)\|_{s}^2}{ \|u(t)\|_{s}^2+\varepsilon  }|f(u(t))|^2
\\&\le 
p  (\|u(t)\|_{s}^2+\varepsilon)^{\frac p2}\left(  |\alpha|  K    \|u(t)\|_{L^\infty}^{2\sigma} \frac{\|u(t)\|_{s}^2}{\|u(t)\|_{s}^2+\varepsilon} -\frac{2-p}{2}\frac{\|u(t)\|_{s}^4}{ (\|u(t)\|_{s}^2+\varepsilon)^{2} } [\Re f(u(t))]^2 + \frac 12 \frac{\|u(t)\|_{s}^2}{ \|u(t)\|_{s}^2+\varepsilon  }|f(u(t))|^2\right),
\end{split}
\end{equation}
where $K$ is the constant in the estimate \eqref{stimaF}. Moreover, 
\begin{equation}
\label{es_uni_3}
    \mathscr{V}^\prime(u(t))[\phi(u(t))]\,{\rm d}W(t) = p  (\|u(t)\|_{s}^2+\varepsilon)^{\frac p2}\frac{\|u(t)\|_{s}^2}{\|u(t)\|_{s}^2+\varepsilon} [\Re f(u(t))]^2\,{\rm d}W(t).
\end{equation}
Coming back to estimate \eqref{es_uni_1}, bearing in mind \eqref{es_uni_2} and \eqref{es_uni_3} and using assumption
\ref{H5bisbis}, we obtain
\begin{align*}
    {\rm d}&(e^{\lambda t}\mathscr{V}(u(t))) \le  \lambda e^{\lambda t}\mathscr{V}(u(t)) {\rm d}t
        \\
        &\;+e^{\lambda t}
p  (\|u(t)\|_{s}^2+\varepsilon)^{\frac p2}  \frac{\|u(t)\|_{s}^2}{\|u(t)\|_{s}^2+\varepsilon} 
\left(  |\alpha|  K    \|u(t)\|_{L^\infty}^{2\sigma}  -\frac{2-p}{2}\frac{\|u(t)\|_{s}^2}{ \|u(t)\|_{s}^2+\varepsilon } [\Re f(u(t))]^2 + \frac 12  |f(u(t))|^2\right) {\rm d}t
\\
& \;+ e^{\lambda t} p  (\|u(t)\|_{s}^2+\varepsilon)^{\frac p2}\frac{\|u(t)\|_{s}^2}{\|u(t)\|_{s}^2+\varepsilon} [\Re f(u(t))]^2\,{\rm d}W(t)
\\
&\le  \lambda e^{\lambda t}\mathscr{V}(u(t)){\rm d}t
+e^{\lambda t}
\frac {p(2-p)}2  \varepsilon \frac{\|u(t)\|_{s}^2}{ (\|u(t)\|_{s}^2+\varepsilon)^{2-\frac p2}}  [\Re f(u(t))]^2{\rm d}t
+Bp e^{\lambda t} \frac{\|u(t)\|_{s}^2}{ (\|u(t)\|_{s}^2+\varepsilon)^{1-\frac p2}}{\rm d}t
\\
& \;+ e^{\lambda t} p  (\|u(t)\|_{s}^2+\varepsilon)^{\frac p2}\frac{\|u(t)\|_{s}^2}{\|u(t)\|_{s}^2+\varepsilon} [\Re f(u(t))]^2\,{\rm d}W(t).
\end{align*}
We now take the integral formulation of the above inequality (bearing in mind that we are working on the time interval $[t_0,t]$) and pass to the limit as $\varepsilon\rightarrow 0$. It holds 

\begin{equation*}
e^{\lambda t}\mathscr{V}(u(t)) \rightarrow e^{\lambda t}\|u(t)\|^p_s, \qquad \mathbb{P}-a.s.,
\end{equation*}

\begin{equation*}
    \int_{t_0}^t e^{\lambda r}\mathscr{V}(u(r))\, {\rm d}r \rightarrow \int_{t_0}^t e^{\lambda r}\|u(r)\|^p_s\,{\rm d}r, \qquad \mathbb{P}-a.s.,
\end{equation*}
\[
\varepsilon   \int_{t_0}^t   e^{\lambda r}  \frac{\|u(r)\|_{s}^2}{ (\|u(r)\|_{s}^2+\varepsilon)^{2-\frac p2}}  [\Re f(u(r))]^2 {\rm d}r \to 0 , \qquad \mathbb{P}-a.s.,
\]
and 
\[
  \int_{t_0}^t e^{\lambda r}  \frac{\|u(r)\|_{s}^2}{ (\|u(r)\|_{s}^2+\varepsilon)^{1-\frac p2}}  {\rm d}r
   \rightarrow 
 \int_{t_0}^t e^{\lambda r}\|u(r)\|_{s}^p
 {\rm d}r,\qquad \qquad \mathbb{P}-a.s..
\]
Moreover, we have the convergence in probability of the stochastic integral:
\begin{align*}
\int_{t_0}^t e^{\lambda r} (\|u(r)\|_{s}^2+\varepsilon)^{p/2}\frac{\|u(r)\|_{s}^2}{\|u(r)\|_{s}^2+\varepsilon} [\Re f(u(r))]^2\,{\rm d}W(r)
\rightarrow \int_{t_0}^t e^{\lambda r} \|u(r)\|_{s}^p[\Re f(u(r))]^2\,{\rm d}W(r)
\end{align*}
as a consequence of the fact that 
\begin{equation*}
    \int_{t_0}^t e^{2\lambda r}[\Re f(u(r))]^4\left| (\|u(r)\|_{s}^2+\varepsilon)^{p/2}\frac{\|u(r)\|_{s}^2}{\|u(r)\|_{s}^2+\varepsilon}  - \|u(r)\|_{s}^p \right|^2 \, {\rm d}r \rightarrow 0,
\end{equation*}
thanks to the Dominated Convergence Theorem (since $u \in L^\infty(0,T;H^s)$).

Thus, setting $\tilde B=-B>0$
 we obtain 
\begin{equation*}
e^{\lambda t}\|u(t)\|^p_s\le e^{\lambda t_0}\|u(t_0)\|^p_s + (\lambda - p \tilde B )\int_{t_0}^te^{\lambda r} \|u(r)\|_{s}^p + \int_{t_0}^te^{\lambda r} \|u(r)\|_{s}^p[\Re f(u(r))]^2\,{\rm d}W(r).
\end{equation*}
If we now choose $\lambda>0$ such that $\lambda-p \tilde B<0$ we obtain
\begin{equation}
\label{supermartingale}
e^{\lambda t}\|u(t)\|^p_s\le e^{\lambda t_0}\|u(t_0)\|^p_s + \int_{t_0}^te^{\lambda r} \|u(r)\|_{s}^p[\Re f(u(r))]^2\,{\rm d}W(r).
\end{equation}
The stochastic integral in the right hand side of the above inequality is a local martingale. In fact, if we define the stopping time $\tau_N:= \inf \{t>\tau :  \|u(t)\|_s>N\}$, with $N \in \mathbb{N}$, and 
\[
M_N(t):=\int_{t_0}^{t\wedge \tau_N}e^{\lambda r} \|u(r)\|_{s}^p[\Re f(u(r))]^2\,{\rm d}W(r),
\]
then 
\[
\mathbb{E}[|M_N(t)|^2] \le \int_{t_0}^{t\wedge \tau_N}e^{2\lambda r} \|u(r)\|_{s}^{2p}[\Re f(u(r))]^4\,{\rm d}r< \infty,
\]
since by the assumptions $f:H^s \rightarrow \mathbb{C}$ is bounded on balls and $u \in C([0,T];H^s)$. Hence, $M_N(t)$ is a square integrable martingale; in particular, $\mathbb{E}\left[M_N(t) |\mathcal{F}_{t_0}\right]=0$ for any $t \ge t_0$.
Therefore, taking the conditional expectation on both sides of \eqref{supermartingale}, up to the stopping time $\tau_N$, we get
\begin{equation*}
\mathbb{E}[e^{\lambda (t\wedge \tau_N)}\|u(t\wedge \tau_N)\|_s^p |\mathscr{F}_{t_0}] \le e^{\lambda t_0} \|u(t_0)\|_s^p  \qquad \forall  \ t_0<t.
\end{equation*}
Since $t \wedge \tau_N \rightarrow t$ $\mathbb{P}$-a.s. as $N \rightarrow \infty$, we get 
\begin{equation}
\label{eq4.}
\mathbb{E}[e^{\lambda t}\|u(t)\|_s^p |\mathscr{F}_{t_0}] \le e^{\lambda t_0} \|u(t_0)\|_s^p  \qquad \forall  \ t_0<t.
\end{equation}
which concludes the proof. \end{proof}

Let us now prove Theorem \ref{uniq_inv_thm}. Part of the proof is inspired by \cite[Proof of Theorem 1.4]{BrzMasSei}.
\begin{proof}[Proof of Theorem \ref{uniq_inv_thm}]
\begin{itemize}
\item [i)]
Let $u$ be the unique solution of problem \eqref{NLS} starting from $u^0 \in H^s$.
Lemma \ref{lambda} yields
\begin{equation}
\label{es1_delta0}
\mathbb{E}\left[ \|u(t)\|^p_s \right]\le e^{-\lambda t} \|u^0\|^p_s, \qquad \forall \ t \ge 0,
\end{equation}
which proves statement (i).
\item[ii)] Take an arbitrary $\bar \lambda \in (0, \lambda)$ and set $\varepsilon:= \lambda-\bar \lambda>0$. We have 
\begin{equation*}
   \mathbb{P}\left(\sup_{t \in [k,k+1]}e^{\bar \lambda t}\|u(t)\|^p_s\ge \|u^0\|^p_s \right) \le \mathbb{P}\left( \sup_{t \in [k,k+1]}e^{\lambda t}\|u(t)\|^p_s\ge e^{\varepsilon k}\|u^0\|^p_s\right),
\end{equation*}
for any $k \in \mathbb{N}$. Without loss of generality we assume $\|u^0\|_s>0$ (otherwise there is nothing to prove). By the Doob Supermartingale inequality, bearing in mind \eqref{es1_delta0}, we infer 
\begin{align*}
\mathbb{P}\left( \sup_{t \in [k,k+1]}e^{\lambda t}\|u(s)\|^p_s \ge e^{\varepsilon k}\|u^0\|^p_s\right) 
\le 
\frac{\mathbb{E}[e^{\lambda s}\|u(s)\|^p_s]}{e^{\varepsilon k}\|u^0\|^p_s} 
\le 
e^{-\varepsilon k}.    
\end{align*}
Thus 
\begin{equation*}
    \sum_{k=0}^\infty  \mathbb{P}\left(\sup_{t \in [k,k+1]}e^{\bar\lambda t}\|u(t)\|^p_s\ge \|u^0\|^p_s \right) < \infty,
\end{equation*}
hence statement (ii) follows from the Borel-Cantelli lemma.
\item[iii)]
 For the unique solution of problem \eqref{NLS}, we put in evidence the initial datum $u^0 \in H^s$
by writing $u(\cdot;u^0)$.
From (ii) we know that, for every $\bar \lambda \in (0, \lambda)$,
there exists a $\mathbb{P}$-a.s. finite random time  $\tau_0$ such that
\begin{equation*}
 \|u(t;u_0)\|^p_s \le e^{-\bar \lambda t} \|u^0\|^p_s, \qquad \forall \ t \ge \tau_0, \quad \mathbb{P}-a.s.
\end{equation*}
Hence
 \begin{equation}
 \label{lim_1}
\|u(t;u^0)\|_s \rightarrow 0, \quad \text{as} \ t \rightarrow \infty, \quad \mathbb{P}-a.s.
\end{equation}
Take any $\phi \in \mathcal{C}(H^s)$ i.e. $\phi:H^s \rightarrow \mathbb{R}$ continuous. From \eqref{lim_1} we infer 
\[
\phi(u(t;u^0)) \rightarrow \phi(0), \quad \text{as} \ t \rightarrow \infty,
\]
for any initial datum $u^0 \in H^s$. By the 
 Dominated Convergence Theorem we also get
 \begin{equation}
\label{lim_2}
(P_t\phi ) (u^0)=\mathbb E  \phi(u(t;u^0)) \rightarrow \phi(0), \quad \text{as} \ t \rightarrow \infty.
 \end{equation}
 Let now $\mu$ be any invariant measure. Then, from its definition, we have 
\[
  \int_{H^s} \phi(x)\,\mu({\rm d} x) = \int_{H^s} P_t\phi(x)\,\mu({\rm d} x) \quad\forall\,t\geq0,\quad\forall\,\phi\in \mathcal{B}_b(H^s).
\]
Taking $\phi \in \mathcal{C}_b(H^s)$, bearing in mind \eqref{lim_2}, 
by the Dominated Convergence Theorem the right hand side converges to $\phi(0)$ as $t \rightarrow \infty$. This implies
\begin{equation}
\label{final_inv}
    \int_{H^s}\phi(x)\, {\rm d}\mu(x)=\phi(0) \qquad\forall\,t\geq0,\quad\forall\,\phi\in \mathcal{C}_b(H^s).
\end{equation}
We notice that $\mathcal{C}_b(H^s)$ is a determining set for the measure. 
In fact, take $\phi(u)=e^{\im (u,h)_s}$, $h \in H^s$; with this choice the integral defines the characteristic function and this is enough to determine the measure. 
Thus from equality \eqref{final_inv} we conclude that $\mu=\delta_0$ is the unique invariant measure since \eqref{lim_2} holds for any initial datum $u^0$.
\end{itemize}
\end{proof}

As a consequence of Lemma \ref{lambda} we also infer stability in probability for the zero solution.
\begin{proposition}
 Under the same assumptions of Theorem \ref{uniq_inv_thm} the zero solution of equation \eqref{NLS_abs} is stable in probability, that is for every $\varepsilon>0$ there exists $\delta>0$ such that, for any solution $u$ starting from $u^0 \in H^s$,
 \begin{equation*}
\|u^0\|_s< \delta \qquad \Longrightarrow \qquad \mathbb{P}\left(\sup_{t \ge 0} \|u(t)\|_s>\varepsilon\right) < \varepsilon.
 \end{equation*}
\end{proposition}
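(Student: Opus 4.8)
The plan is to leverage Lemma \ref{lambda}, which guarantees that the process $X_t := e^{\lambda t}\|u(t)\|^p_s$ is a non-negative continuous supermartingale, and then to apply Doob's maximal inequality for non-negative supermartingales on the whole half-line $[0,\infty)$. The role of the exponential weight $e^{\lambda t}$ is precisely that it lets us control the supremum of $\|u(t)\|_s$ uniformly in time by the value at the initial time.

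First I would observe that, since $\lambda>0$, one has $e^{\lambda t}\ge 1$ for every $t\ge 0$, so that
\[
\sup_{t\ge 0}\|u(t)\|^p_s \le \sup_{t\ge 0} X_t \qquad \mathbb{P}\text{-a.s.}
\]
Next, the supermartingale maximal inequality applied to the non-negative continuous supermartingale $X$ yields, for any $a>0$,
\[
\mathbb{P}\Big(\sup_{t\ge 0} X_t > a\Big) \le \frac{\mathbb{E}[X_0]}{a} = \frac{\|u^0\|^p_s}{a},
\]
where I used $X_0=\|u^0\|^p_s$. Combining these two facts with the choice $a=\varepsilon^p$ and the elementary identity $\{\sup_t\|u(t)\|_s>\varepsilon\}=\{\sup_t\|u(t)\|^p_s>\varepsilon^p\}$, I obtain
\[
\mathbb{P}\Big(\sup_{t\ge 0}\|u(t)\|_s>\varepsilon\Big) \le \frac{\|u^0\|^p_s}{\varepsilon^p}.
\]
Finally, choosing $\delta:=\varepsilon^{(1+p)/p}$, the hypothesis $\|u^0\|_s<\delta$ forces $\|u^0\|^p_s<\varepsilon^{1+p}$, whence the right-hand side above is strictly smaller than $\varepsilon$. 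This is exactly the asserted stability in probability.

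Given Lemma \ref{lambda}, the argument is essentially routine; the only point that requires a little care is the passage to the supremum over the infinite time interval $[0,\infty)$. I would handle this by applying the supermartingale maximal inequality first on each finite interval $[0,T]$ and then letting $T\to\infty$ through monotone convergence, which is legitimate because $X$ has continuous (in particular right-continuous) and non-negative paths, so $\sup_{t\in[0,T]}X_t$ increases $\mathbb{P}$-a.s. to $\sup_{t\ge 0}X_t$ and the bound $\mathbb{E}[X_0]/a$ is uniform in $T$.
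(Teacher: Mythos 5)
Your proposal is correct and follows essentially the same route as the paper: both rest on Lemma \ref{lambda}, arrive at the bound $\mathbb{P}\left(\sup_{t\ge 0}\|u(t)\|_s \ge \varepsilon\right)\le \|u^0\|_s^p/\varepsilon^p$, and choose $\delta=\varepsilon^{(p+1)/p}$. The only difference is packaging: you invoke Doob's maximal inequality for non-negative supermartingales directly (with a careful monotone passage $T\to\infty$), whereas the paper re-derives that inequality by applying optional sampling at the first hitting time $\sigma_\varepsilon$ together with Markov's inequality --- the standard proof of the same fact.
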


\begin{proof}
Fix $\varepsilon>0$ and set
\begin{equation*}
\sigma_\varepsilon :=\inf\{t \ge 0:\quad  \|u(t)\|_s \ge \varepsilon\}. 
\end{equation*}
Lemma \ref{lambda} and the optional sampling theorem yields 
\begin{equation}
\label{sampling}
\mathbb{E}\left[\|u(t\wedge \sigma_\varepsilon)\|_s^p\right] \le \|u^0\|_s^p, \qquad \forall \ t \ge 0, 
\end{equation}
where we recall $p \in (0,1)$ is as in \eqref{terza-ipotesi-phi}.
 We have $=\{\sigma_\varepsilon < t\}=\left\{\displaystyle\sup_{r \in [0,t]}\|u(r)\|_s \ge \varepsilon \right\}$ and
$\mathbb{P}(\|u(\sigma_\varepsilon)\|\ge \varepsilon)=1$, so 
$\{\sigma_\varepsilon < t\}
=\{\sigma_\varepsilon < t \}\cap \{\|u(\sigma_\varepsilon)\|_s \ge \varepsilon\}
\subseteq 
 \{ \|u(t \wedge \sigma_\varepsilon)\|_s \ge \varepsilon \}$.
 Thus, thanks to the Markov inequality and estimate \eqref{sampling}, we infer
 \begin{equation*}
\mathbb{P}\left(\sigma_\varepsilon < t\right) \le \mathbb{P}\left(\|u(t \wedge \sigma_\varepsilon)\|_s \ge \varepsilon \right) \le \frac{\mathbb{E}\left[\|u(t\wedge \sigma_\varepsilon)\|_s^p\right]}{\varepsilon^p}\le \frac{\|u^0\|_s^p}{\varepsilon^p},
 \end{equation*}
for all $t \ge 0$. Since $\{\sigma_\varepsilon < t\} \nearrow \{\sigma_\varepsilon < \infty\}$ as $t \rightarrow \infty$, we obtain
 \begin{equation*}
\mathbb{P}\left(\sigma_\varepsilon < \infty\right) \le  \frac{\|u^0\|_s^p}{\varepsilon^p},
 \end{equation*}
 that is 
\begin{equation*}
\mathbb{P}\left( \sup_{t \ge 0}\|u(t)\|_s \ge \varepsilon\right)    \le  \frac{\|u^0\|_s^p}{\varepsilon^p}.
\end{equation*}
Taking $\delta>0$ such that $\delta=\varepsilon^{\frac{p+1}{p}}$, the thesis follows.
\end{proof}


\appendix

\section{Compactness and tightness results}
\label{tight_sec_main}
We recall here some deterministic compactness results and the tightness criteria.
We work with the triplet of spaces $H^s \subset H^{s'} \subset H^{-s-1}$, for $s$ and $s'$ as in \eqref{STAR}. Both embeddings are continuous and dense, moreover the embeddings $H^s \subset H^{s'}$ and $H^s \subset H^{-s-1}$ are compact.

\subsection{Deterministic compactness criteria}

Given $r>0$ let us consider the ball 
\begin{equation*}
\mathbb{B}_{r,H^s}:= \{x \in H^{s} \ : \ \|x\|_{s} \le r\}.
\end{equation*}
We simply write $\mathbb{B}$ for $\mathbb{B}_{r,H^{s}}$.  Let $\mathbb{B}_w$ denote the ball endowed with the weak topology  of $H^s$. Let us consider the following subspace of $C_w([0,T];H^{s})$
\begin{equation*}
C([0,T];\mathbb{B}_w)= \{u \in C_w([0,T];H^{s}) \ : \ \sup_{t \in [0,T]}\|u(t)\|_{s} \le r\}.
\end{equation*}
The space $C([0,T];\mathbb{B}_w)$ is metrizable w.r.t. a metric $\rho$ (see, e.g., \cite[Appendix A]{BHW2019}). Since, by the Banach-Alaoglu Theorem, $\mathbb{B}_w$ is compact, $(C([0,T];\mathbb{B}_w), \rho)$ is a complete metric space. Moreover, 
$u_n \rightarrow u$ in $C([0,T];\mathbb{B}_w)$ iff for any $h \in H^{-s}$
\begin{equation*}
\lim_{n \rightarrow \infty} \sup_{s \in [0,T]}|_{H^{s}}\langle u_n(s)-u(s), h\rangle_{H^{-s}}|=0.
\end{equation*}
We start with the following result.

\begin{lemma}
\label{technical_lemma}
Let $u_n: [0,T] \rightarrow H^{s}$, $n \in \mathbb{N}$, be functions s.t.
\begin{itemize}
\item [i)] $\displaystyle\sup_{n \in \mathbb{N}}\sup_{t \in [0,T]} \|u_n(t)\|_{s} \le r$, for some $r>0$,
\item [ii)] $u_n \rightarrow u$ in $C([0,T];H^{-s-1})$.
\end{itemize} 
Then $u, u_n \in C([0,T];\mathbb{B}_w)$, for all $n \in \mathbb{N}$, and $u_n \rightarrow u$ in $C([0,T];\mathbb{B}_w)$ as $n \rightarrow \infty$.
\end{lemma}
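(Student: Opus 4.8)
The plan is to exploit the reflexivity of the Hilbert space $H^s$ together with the uniqueness of weak limits, and then to upgrade pointwise weak convergence to the uniform-in-time statement encoded by the metric on $C([0,T];\mathbb{B}_w)$ by means of a density argument.

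First I would record the pointwise weak convergence. Fix $t\in[0,T]$. By hypothesis i) the family $\{u_n(t)\}_n$ is bounded in $H^s$ by $r$, so by the Banach--Alaoglu theorem every subsequence admits a further subsequence converging weakly in $H^s$ to some limit $w$. Since the continuous embedding $H^s\subset H^{-s-1}$ is weak-to-weak continuous, such a sub-subsequence also converges to $w$ weakly in $H^{-s-1}$; but by ii) $u_n(t)\to u(t)$ strongly, hence weakly, in $H^{-s-1}$, so uniqueness of weak limits forces $w=u(t)$. As every subsequential weak limit equals $u(t)$, the whole sequence satisfies $u_n(t)\rightharpoonup u(t)$ weakly in $H^s$. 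In particular, by weak lower semicontinuity of the norm, $\|u(t)\|_s\le\liminf_n\|u_n(t)\|_s\le r$ for every $t$, which gives the required uniform bound for $u$ (the corresponding bound for each $u_n$ being immediate from i)).

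Next I would check that $u$ and each $u_n$ belong to $C([0,T];\mathbb{B}_w)$, i.e.\ lie in $C_w([0,T];H^s)$. Both $u$ and $u_n$ are continuous into $H^{-s-1}$ (the former as the $C([0,T];H^{-s-1})$-limit, the latter by assumption) and are bounded in $H^s$ by $r$. Fixing such a function $v$ (either $u$ or some $u_n$) and a sequence $t_k\to t$, the same Banach--Alaoglu plus uniqueness-of-limits argument as above --- now comparing the weak $H^s$ limit of $v(t_k)$ with the strong $H^{-s-1}$ limit $v(t)$ --- yields $v(t_k)\rightharpoonup v(t)$ in $H^s$. Hence $t\mapsto {}_{H^s}\langle v(t),h\rangle_{H^{-s}}$ is continuous for every $h\in H^{-s}$, which is exactly weak continuity, so $v\in C_w([0,T];H^s)$ and, together with the norm bound, $v\in C([0,T];\mathbb{B}_w)$.

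Finally, and this is the main obstacle, I would establish the convergence $u_n\to u$ in $C([0,T];\mathbb{B}_w)$, which by the characterization recalled above amounts to $\sup_{t\in[0,T]}|{}_{H^s}\langle u_n(t)-u(t),h\rangle_{H^{-s}}|\to 0$ for each fixed $h\in H^{-s}$. The difficulty is that ii) only provides convergence in $H^{-s-1}$, whereas the pairing must be tested against all of $H^{-s}$, uniformly in $t$. I would resolve this by a splitting argument based on the density of $H^{s+1}$ in $H^{-s}$: given $\varepsilon>0$, choose $h_\varepsilon\in H^{s+1}$ with $\|h-h_\varepsilon\|_{-s}<\varepsilon$. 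For the remainder, the uniform $H^s$ bound from i) gives $|{}_{H^s}\langle u_n(t)-u(t),h-h_\varepsilon\rangle_{H^{-s}}|\le \|u_n(t)-u(t)\|_s\,\|h-h_\varepsilon\|_{-s}\le 2r\varepsilon$, uniformly in $t$ and $n$. For the smooth part, since $u_n(t)-u(t)\in H^s\subset H$ and $h_\varepsilon\in H^{s+1}\subset H$, both duality brackets reduce to the $H$-scalar product and are therefore consistent, whence
\[
\sup_{t\in[0,T]}|{}_{H^s}\langle u_n(t)-u(t),h_\varepsilon\rangle_{H^{-s}}|\le \|h_\varepsilon\|_{s+1}\,\sup_{t\in[0,T]}\|u_n(t)-u(t)\|_{-s-1},
\]
which tends to $0$ as $n\to\infty$ by ii). Combining the two bounds gives $\limsup_n\sup_{t\in[0,T]}|{}_{H^s}\langle u_n(t)-u(t),h\rangle_{H^{-s}}|\le 2r\varepsilon$, and letting $\varepsilon\to0$ concludes the proof.
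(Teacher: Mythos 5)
Your proof is correct, and its decisive step --- the density splitting of a test element $h\in H^{-s}$ into a smooth part $h_\varepsilon\in H^{s+1}$ (handled via the $H^{-s-1}$--$H^{s+1}$ duality and hypothesis ii)) plus a remainder controlled uniformly by the bound $\|u_n(t)-u(t)\|_s\le 2r$ from i) --- is exactly the two-step argument in the paper's proof. Where you diverge is in how membership in $C([0,T];\mathbb{B}_w)$ is obtained: the paper invokes the Strauss lemma to conclude $u_n\in L^\infty(0,T;H^s)\cap C_w([0,T];H^{-s-1})=C_w([0,T];H^s)$, gets the bound on $u$ from a $\liminf$ of $L^\infty$-norms, and deduces $u\in C([0,T];\mathbb{B}_w)$ at the very end from completeness of the metric space $(C([0,T];\mathbb{B}_w),\rho)$; you instead argue directly, using weak sequential compactness of the ball in $H^s$ together with uniqueness of (weak) limits in $H^{-s-1}$ to get both the pointwise weak convergence $u_n(t)\rightharpoonup u(t)$ in $H^s$ (hence $\|u(t)\|_s\le r$ by weak lower semicontinuity) and the weak continuity of $u$ and of each $u_n$ along sequences $t_k\to t$. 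Your route is more self-contained --- it needs neither the Strauss lemma nor the completeness of the metric space, only Banach--Alaoglu in a separable Hilbert space and the Hausdorff property of the weak topology --- at the modest cost of repeating the compactness-plus-uniqueness argument in two places; the paper's route is shorter once the external results are granted. Both are complete proofs, and your handling of the consistency of the two duality brackets on $H^{s+1}$ test functions matches the paper's Step 1.
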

\begin{proof}
The proof follows the line of \cite[Lemma 2.1]{BM11} and \cite[Lemma 4.1]{BHW2019}.
From the assumptions and the Strauss Lemma \cite{Strauss}, we infer
\begin{equation*}
u_n \in L^\infty(0,T;H^{s}) \cap C_w([0,T]; H^{-s-1}) =C_w([0,T];H^{s})
\end{equation*}
and $\sup_{t \in [0,T]} \|u_n(t)\|_{s} \le r$, for any $n \in \mathbb{N}$. 
Moreover
\[
\|u\|_{L^\infty(0,T;H^{s})}\le \liminf_{n\to\infty} \|u_n\|_{L^\infty(0,T;H^{s})}.
\]
Hence we get $u_n \in C([0,T]; \mathbb{B}_w)$ for any $n \in \mathbb{N}$.
It remains to prove that $u_n \rightarrow u$ in $C([0,T];\mathbb{B}_w)$ as $n \rightarrow \infty$, that is, for all $h \in H^{-s}$ it holds
\begin{equation}
\label{star1}
\lim_{n \rightarrow \infty}|_{H^{s}}\langle u_n(s)-u(s), h\rangle_{H^{-s}}|=0, 
\end{equation}
and that $u \in C([0,T];\mathbb{B}_w)$.
\\
\textbf{Step 1.} First, let us fix $h \in H^{s+1}$. Then, 
\begin{align*}
\sup_{s \in [0,T]}|_{H^{s}}\langle u_n(s)-u(s), h\rangle_{H^{-s}}| 
=\sup_{s \in [0,T]}|_{H^{-s-1}}\langle u_n(s)-u(s), h\rangle_{H^{s+1}}| 
\le \|u_n-u\|_{C([0,T];H^{-s-1})}\|h\|_{s+1} \rightarrow 0, 
\end{align*}
as $n \rightarrow \infty$, by (ii). 
\\
\textbf{Step 2.} Now we show that \eqref{star1} holds for all $h \in H^{-s}$. Let $\delta>0$ and let $h \in H^{-s}$. Since $H^{s+1}$ is dense in $H^{-s}$, there exists $h_\delta \in H^{s+1}$ such that $\|h-h_\delta\|_{-s} \le \delta$. Using (i), we infer that, for all $s \in [0,T]$, the following estimate holds
\begin{align*}
|_{H^{s}}\langle u_n(s)-u(s), h\rangle_{H^{-s}}|
&\le |_{H^{s}}\langle u_n(s)-u(s), h-h_\delta\rangle_{H^{-s}}|+|_{H^{s}}\langle u_n(s)-u(s), h_\delta\rangle_{H^{-s}}|
\\
&\le \|u_n(s)-u(s)\|_{s}  \|h-h_\delta\|_{-s} + |_{H^{s}}\langle u_n(s)-u(s), h_\delta\rangle_{H^{-s}}|
\\
&\le \delta \|u_n-u\|_{L^\infty(0,T;H^{s})} + |_{H^s}\langle u_n(s)-u(s), h_\delta\rangle_{H^{-s}}|
\\ 
& \le 2\delta \sup_{n \in \mathbb{N}}\|u_n\|_{L^\infty(0,T; H^{s})} + |_{H^{s}}\langle u_n(s)-u(s), h_\delta\rangle_{H^{-s}}|
\\
& \le 2\delta r + |_{H^s}\langle u_n(s)-u(s), h_\delta\rangle_{H^{-s}}|.
\end{align*}
Hence, we infer 
\begin{equation*}
\sup_{t \in [0,T]} |_{H^{s}}\langle u_n(t)-u(t), h\rangle_{H^{-s}}| \le 2\delta r + \sup_{t \in [0,T]}  |_{H^{s}}\langle u_n(t)-u(t), h_\delta\rangle_{H^{-s}}|.
\end{equation*}
Since $h_\delta \in H^{s+1}$, by Step 1, passing to the upper limit as $n \rightarrow \infty$, we obtain 
\begin{equation*}
\limsup_{n \rightarrow \infty} \sup_{t \in [0,T]} |_{H^{s}}\langle u_n(t)-u(t), h\rangle_{H^{-s}}| \le 2\delta r,
\end{equation*}
and, by the arbitrarieness of $\delta$, we infer 
\begin{equation*}
\lim_{n \rightarrow \infty} \sup_{t \in [0,T]} |_{H^{s}}\langle u_n(t)-u(t), h\rangle_{H^{-s}}| =0.
\end{equation*}
Since $C([0,T];\mathbb{B}_w)$ is a complete metric space, we infer that $u \in C([0,T];\mathbb{B}_w)$ as well. This concludes the proof.
\end{proof}

Let us now prove the following result.
\begin{proposition}
\label{tight_prop}
Let $T>0$ be any finite time. A set $K\subset C_w([0,T];H^{s})$ is relatively compact in $C_w([0,T];H^s)$ if the following conditions hold
\begin{itemize}
\item[(i)] $\displaystyle\sup_{u \in K} \|u\|_{L^\infty(0,T;H^{s})} \le r$, for some $r>0$, 
\item[(ii)] K is equicontinuous in $C([0,T];H^{-s-1})$, i.e.
\begin{equation*}
\lim_{\delta \rightarrow 0}\sup_{u \in K} \sup_{|t-s| \le \delta} \|u(t)-u(s)\|_{-s-1} =0.
\end{equation*}
\end{itemize}
\end{proposition}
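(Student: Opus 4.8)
The plan is to deduce relative compactness in $C_w([0,T];H^s)$ from sequential relative compactness in the metrizable, complete space $C([0,T];\mathbb{B}_w)$ introduced above, using the Arzel\`a--Ascoli theorem in $C([0,T];H^{-s-1})$ together with Lemma \ref{technical_lemma}. First I would record that condition (i), combined with the Strauss lemma \cite{Strauss}, already guarantees $K \subset C([0,T];\mathbb{B}_w) \subset C_w([0,T];H^s)$, so it suffices to show that every sequence in $K$ admits a subsequence converging in $C([0,T];\mathbb{B}_w)$: since $(C([0,T];\mathbb{B}_w),\rho)$ is a complete metric space, sequential relative compactness is equivalent to relative compactness, and the inclusion into $C_w([0,T];H^s)$ is continuous.

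The central step is an application of the Arzel\`a--Ascoli theorem in the Banach space $C([0,T];H^{-s-1})$. Equicontinuity of $K$ in this space is exactly hypothesis (ii). For the pointwise relative compactness, I would fix $t \in [0,T]$ and note that by (i) the set $\{u(t) : u \in K\}$ is bounded in $H^s$; since the embedding $H^s \subset H^{-s-1}$ is compact (recalled at the start of this appendix), this set is relatively compact in $H^{-s-1}$. Arzel\`a--Ascoli then yields that $K$ is relatively compact in $C([0,T];H^{-s-1})$.

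Given an arbitrary sequence $(u_n)_{n} \subset K$, the previous step provides a subsequence $(u_{n_k})_k$ converging in $C([0,T];H^{-s-1})$ to some limit $u$. This subsequence satisfies precisely the hypotheses of Lemma \ref{technical_lemma}: the uniform bound $\sup_k \sup_{t} \|u_{n_k}(t)\|_s \le r$ comes from (i), and the convergence in $C([0,T];H^{-s-1})$ has just been obtained. Lemma \ref{technical_lemma} then gives $u \in C([0,T];\mathbb{B}_w)$ and $u_{n_k} \to u$ in $C([0,T];\mathbb{B}_w)$, hence in $C_w([0,T];H^s)$. As the sequence was arbitrary, $K$ is sequentially relatively compact in $C([0,T];\mathbb{B}_w)$, and therefore relatively compact in $C_w([0,T];H^s)$.

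The one point requiring care is the verification of the pointwise relative compactness feeding Arzel\`a--Ascoli, that is, that boundedness in the stronger norm $H^s$ upgrades to precompactness in the weaker space $H^{-s-1}$; this is precisely where the compactness of the embedding $H^s \subset H^{-s-1}$ is essential, and it is the only nontrivial ingredient, since the matching of the weak-topology convergence with the metric $\rho$ has already been settled by Lemma \ref{technical_lemma}. The remaining steps are then a routine diagonal-free extraction and an appeal to completeness of $(C([0,T];\mathbb{B}_w),\rho)$.
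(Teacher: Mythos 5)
Your proposal is correct and follows essentially the same two-step route as the paper: first relative compactness in $C([0,T];H^{-s-1})$, obtained from the compactness of the embedding $H^s \subset H^{-s-1}$ (pointwise precompactness) together with the equicontinuity hypothesis (ii), and then the upgrade of the limit and of the convergence to $C([0,T];\mathbb{B}_w) \subset C_w([0,T];H^s)$ via Lemma \ref{technical_lemma}. The only difference is cosmetic: the paper proves the Arzel\`a--Ascoli step by hand (extraction of a common Cauchy subsequence along a dense set of times chosen outside a common null set, then a finite covering argument using (ii)), whereas you invoke the theorem directly, which is legitimate since the a.e.\ bound in (i) upgrades to a bound $\|u(t)\|_s \le r$ at every $t$ by weak continuity of the elements of $K$ and weak lower semicontinuity of the $H^s$-norm.
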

\begin{proof}
Let $\{z_n\}_{n \in \mathbb{N}}\subset K$: we aim to construct a subsequence converging in $C_w([0,T];H^{s})$.

\textbf{Step 1.} Thanks to (i) we can choose a constant $C>0$ and for each $n \in \mathbb{N}$ a null set $I_n$ with $\|z_n(t)\|_{s} \le C$ for all $t \in [0,T ]\setminus  I_n$. The set $I:= \bigcup_{n \in \mathbb{N}}I_n$ is also a nullset and, for each $t \in [0,T]\setminus I$, the sequence $\{z_n(t)\}_{n \in \mathbb{N}}$ is bounded in $H^{s}$. Let $\{t_j\}_{j \in \mathbb{N}} \subset [0,T] \setminus I$ be a sequence which is dense in $[0,T]$. By construction, the embedding $H^{s} \subset H^{-s-1}$ is compact. Thus, for any $j \in \mathbb{N}$, we can extract a Cauchy subsequence in $H^{-s-1}$, still denoted by $\{z_n(t_j)\}_{n \in \mathbb{N}}$. One obtains a common Cauchy subsequence $\{z_n(t_j)\}_{n \in \mathbb{N}}$, by means of a diagonalization argument.

Let $\varepsilon>0$; from (ii) we infer the existence of a $\delta>0$ such that 
\begin{equation}
\label{star2}
\sup_{n \in \mathbb{N}} \sup_{|t-s| \le \delta} \|z_n(t)-z_n(s)\|_{-s-1} \le \frac{\varepsilon}{3}.
\end{equation}
We now choose finitely many open balls $U_\delta^1,...,U_\delta^R$ of radius $\delta$ covering the interval $[0,T]$. By density, each of these balls contains an element of the sequence $\{t_j\}_{j \in \mathbb{N}}$, say $t_{j_r} \in U_\delta ^r$ for $r \in\{1,..., R\}$. In particular, the sequence $\{z_n(t_{j_r})\}_{n \in \mathbb{N}}$ is Cauchy for any $r \in \{1,...,R\}$. Therefore, 
\begin{equation}
\label{star3}
\|z_n(t_{j_r})-z_m(t_{j_r})\|_{-s-1} \le \frac{\varepsilon}{3}, \qquad r=1,...,R,
\end{equation}
for $n, m$ chosen sufficiently large. We now fix $t \in [0,T]$ and take $r \in \{1,...,R\}$ with $|t_{j_r}-t| \le \delta$. Exploiting \eqref{star2} and \eqref{star3} we infer 
\begin{align}
\label{star4}
\|z_n(t)-z_m(t)\|_{-s-1} \le \|z_n(t)-z_n(t_{j_r})\|_{-s-1}+\|z_n(t_{j_r})-z_m(t_{j_r})\|_{-s-1}+\|z_m(t_{j_r})-z_m(t)\|_{-s-1} \le \varepsilon.
\end{align}
This means that $\{z_n\}_{n \in \mathbb{N}}$ is a Cauchy sequence in $C([0,T];H^{-s-1})$, being the estimate \eqref{star4} uniform in $t \in [0,T]$. Therefore, there exists a subsequence of $\{z_n\}_{n \in \mathbb{N}}$, still denoted by $\{z_n\}_{n \in \mathbb{N}}$, and $z \in C([0,T];H^{-s-1})$ with $z_n \rightarrow z \in C([0,T];H^{-s-1})$ as $n \rightarrow \infty$.

\textbf{Step 2.} From (i) we infer the existence of $r>0$ with $\sup_{n \in \mathbb{N}}\|z_n\|_{L^\infty([0,T];H^{s})} \le r$. Hence, by Lemma \ref{technical_lemma}, we get $z \in C([0,T];\mathbb{B}_w)$ and $z_n \rightarrow z$ in $C([0,T];\mathbb{B}_w)$. Thus, $z_n \rightarrow z$ in $C_w([0,T];H^{s})$, and this concludes the proof.
\end{proof}

Now we deal with the space $C^{0, \beta}([0,T];H^{-s-1})$ defined in \S \ref{math_ass_sec}.
\begin{corollary}
\label{tight_cor}
Let $s$ and $s'$ as in \eqref{STAR}. Let $0< \beta < 1$ and $T>0$ be any finite time. Then the embedding
\begin{equation*}
L^\infty(0,T;H^{s}) \cap C^{0, \beta}([0,T];H^{-s-1}) \hookrightarrow C([0,T];H^{s'}) \cap C_w([0,T];H^{s}) 
\end{equation*}
is compact.
\end{corollary}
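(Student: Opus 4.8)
The plan is to prove Corollary \ref{tight_cor} by combining the abstract weak-compactness machinery of Proposition \ref{tight_prop} with a classical Arzelà–Ascoli argument on the interpolation space $H^{s'}$. Concretely, I would start with an arbitrary bounded sequence $\{u_n\}_{n\in\mathbb N}$ in $L^\infty(0,T;H^{s}) \cap C^{0,\beta}([0,T];H^{-s-1})$, say with
\[
\sup_{n}\Bigl(\|u_n\|_{L^\infty(0,T;H^{s})} + \|u_n\|_{C^{0,\beta}([0,T];H^{-s-1})}\Bigr) \le r,
\]
and extract a subsequence converging in the target space $C([0,T];H^{s'}) \cap C_w([0,T];H^{s})$.

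The convergence in $C_w([0,T];H^{s})$ is the easy half: the uniform bound $\sup_n\|u_n\|_{L^\infty(0,T;H^s)}\le r$ gives condition (i) of Proposition \ref{tight_prop}, while the H\"older bound $[u_n]_{C^{0,\beta}([0,T];H^{-s-1})}\le r$ yields $\|u_n(t)-u_n(s)\|_{-s-1}\le r|t-s|^\beta$, which is exactly the equicontinuity condition (ii). Hence Proposition \ref{tight_prop} furnishes a subsequence (not relabelled) and a limit $u$ with $u_n\to u$ in $C_w([0,T];H^{s})$; moreover the proof of that proposition actually produces convergence $u_n\to u$ in $C([0,T];H^{-s-1})$, which I will reuse below.

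The main obstacle, and the heart of the argument, is upgrading the weak convergence to \emph{strong} convergence in $C([0,T];H^{s'})$. The key tool is an interpolation inequality: since $\frac d2 < s' < s$, we may write $s' = \theta s + (1-\theta)(-s-1)$ for some $\theta\in(0,1)$, so that
\[
\|w\|_{s'} \lesssim \|w\|_{s}^{\theta}\,\|w\|_{-s-1}^{1-\theta}, \qquad w\in H^{s}.
\]
Applying this to $w = u_n(t)-u_m(t)$ and using the uniform $H^s$-bound on the first factor gives
\[
\sup_{t\in[0,T]}\|u_n(t)-u_m(t)\|_{s'} \lesssim (2r)^{\theta}\,\|u_n-u_m\|_{C([0,T];H^{-s-1})}^{1-\theta}.
\]
Because $\{u_n\}$ is Cauchy in $C([0,T];H^{-s-1})$ (from Step 1 of Proposition \ref{tight_prop}'s proof), the right-hand side tends to $0$, so $\{u_n\}$ is Cauchy, hence convergent, in the complete space $C([0,T];H^{s'})$. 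The limit must coincide with $u$ by uniqueness of limits in the weaker topology $C([0,T];H^{-s-1})$. Thus $u_n\to u$ in $C([0,T];H^{s'})\cap C_w([0,T];H^{s})$, establishing the claimed compact embedding. The only technical care needed is to verify the interpolation exponent identity and to ensure the bound uses only the two quantities controlled by the hypotheses; everything else is a routine consequence of completeness and the results already proved.
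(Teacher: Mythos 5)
Your proposal is correct, but it reaches the strong convergence in $C([0,T];H^{s'})$ by a genuinely different route than the paper. The paper splits the claim in two: it gets the compactness into $C([0,T];H^{s'})$ as a direct citation of the Aubin--Lions--Simon theorem \cite{AL}, and only uses Proposition \ref{tight_prop} (after reading the H\"older bound as equicontinuity in $C([0,T];H^{-s-1})$) for the $C_w([0,T];H^{s})$ half. You instead bypass Aubin--Lions--Simon entirely by exploiting the Hilbert-scale structure: with $\theta=\frac{s'+s+1}{2s+1}\in(0,1)$ (which is the correct exponent, since $-s-1<s'<s$ gives $s'=\theta s+(1-\theta)(-s-1)$) the inequality $\|w\|_{s'}\le\|w\|_{s}^{\theta}\|w\|_{-s-1}^{1-\theta}$ holds with constant $1$ by H\"older's inequality applied to the Fourier-side norm \eqref{H_s_norm}, and this converts the Cauchy property in $C([0,T];H^{-s-1})$ into the Cauchy property in $C([0,T];H^{s'})$, giving $\sup_{t}\|u_n(t)-u_m(t)\|_{s'}\le(2r)^{\theta}\|u_n-u_m\|_{C([0,T];H^{-s-1})}^{1-\theta}$. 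Your argument is more elementary and self-contained for the spaces at hand, while the paper's citation is shorter and would survive in general Banach triples $X\subset B\subset Y$ where the middle space is not an exact interpolation space of the endpoints. Two points you should make explicit to close the argument: first, interpolating at \emph{every} $t\in[0,T]$ requires $\|u_n(t)\|_{s}\le r$ for all $t$, whereas the hypothesis only controls the essential supremum; this is repaired exactly as in Lemma \ref{technical_lemma} (the Strauss lemma together with weak lower semicontinuity of the $H^{s}$-norm along $H^{-s-1}$-convergent sequences upgrades the a.e.\ bound to an everywhere bound), after which your displayed estimate is legitimate. Second, you invoke Step 1 of the \emph{proof} of Proposition \ref{tight_prop} rather than its statement; this is harmless, but you could make the proof free-standing by deriving the Cauchy property in $C([0,T];H^{-s-1})$ directly from the Arzel\`a--Ascoli theorem, using the uniform H\"older seminorm for equicontinuity and the compact embedding $H^{s}\subset H^{-s-1}$ for pointwise relative compactness.
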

\begin{proof}
The compactness of the embedding $L^\infty(0,T;H^{s}) \cap C^{0, \beta}([0,T];H^{-s-1}) \hookrightarrow C([0,T];H^{s'})$ follows from the Aubin-Lions-Simon Theorem, see \cite{AL}.

As far as the compactness of the embedding $L^\infty(0,T;H^{s}) \cap C^{0, \beta}([0,T];H^{-s-1}) \hookrightarrow  C_w([0,T];H^{s})$ is concerned, consider a set $K$ such that 
\begin{equation*}
\sup_{u \in K} \left( \|u\|_{L^\infty(0,T;H^{s})} + \|u\|_{C^{0, \beta}([0,T];H^{-s-1})} \right) \le M,
\end{equation*}
for some positive $M$. Then, it holds
\begin{equation*}
\|u(t)-u(s)\|_{-s-1} \le |t-s|^{\beta}\|u\|_{C^{0, \beta}([0,T];H^{-s-1})} \le M|t-s|^{\beta}.
\end{equation*}
This implies the equicontinuity in $C([0,T];H^{-s-1})$. We thus conclude by applying Proposition \ref{tight_prop}.
\end{proof}

\subsection{Tightness results}

\label{tight_sec}
Given $T>0$, and $s$ and $s'$ as in \eqref{STAR}, we define the locally convex space 
\begin{equation*}
Z_T:=C([0,T];H^{s'}) \cap C_w([0,T];H^s),
\end{equation*}
with the topology $\mathcal{T}$ given by the supremum of the corresponding topologies in the right-hand side.

We recall that a family of probability measures $\{\nu_n\}_{n \in \mathbb{N}}$, defined on the $\sigma$-algebra of Borel subsets of $Z_T$ is tight if for any $\eta >0$, there exists a compact subset $K_\eta$ of $Z_T$ such that 
\begin{equation*}
\sup_{n \in \mathbb{N}}\nu_n (Z_T \setminus K_\eta) \le \eta.
\end{equation*}

\begin{proposition}
\label{prop_tight}
[Tightness criterium]
Fix $0< \beta < 1$. Let $\{u_n\}_{n \in \mathbb{N}}$ be a sequence of $\mathbb{F}$-adapted $Z_T$ valued processes.
 Assume that for any $\delta>0$ there exist positive constants $R_i=R_i(\delta)$, $i=1,2$, such that 
\begin{equation}\label{tight-stima1}
\sup_{n \in \mathbb{N}}\mathbb{P}\left( \|u_n\|_{L^\infty(0,T;H^{s})}>R_1\right) \le \delta,
\end{equation}
\begin{equation}\label{tight-stima2}
\sup_{n \in \mathbb{N}}\mathbb{P}\left( \|u_n\|_{C^{0, \beta}([0,T];H^{-s-1})}>R_2\right) \le \delta.
\end{equation}
Let $\mu_n$ be the law of $u_n$ in $Z_T$. Then, the sequence $\{\mu_n\}_{n \in \mathbb{N}}$ is tight in $Z_T$.
\end{proposition}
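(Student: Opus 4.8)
The plan is to exhibit, for each prescribed $\eta>0$, a single compact set $K_\eta\subset Z_T$ that catches every law $\mu_n$ up to mass $\eta$, and to manufacture $K_\eta$ as a bounded set of the intersection space $L^\infty(0,T;H^{s})\cap C^{0,\beta}([0,T];H^{-s-1})$, which embeds compactly into $Z_T$ by Corollary \ref{tight_cor}. Concretely, given $\eta>0$ I would first invoke the two hypotheses with $\delta=\eta/2$ to obtain radii $R_1=R_1(\eta/2)$ and $R_2=R_2(\eta/2)$, and then set
\begin{equation*}
K_\eta:=\Big\{u\in Z_T:\ \|u\|_{L^\infty(0,T;H^{s})}\le R_1,\ \|u\|_{C^{0,\beta}([0,T];H^{-s-1})}\le R_2\Big\}.
\end{equation*}
Since the embedding $L^\infty(0,T;H^{s})\cap C^{0,\beta}([0,T];H^{-s-1})\hookrightarrow Z_T$ is compact by Corollary \ref{tight_cor}, the set $K_\eta$, being the image of a bounded ball of the intersection space, is relatively compact in $Z_T$.

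To promote relative compactness to genuine compactness, I would verify that $K_\eta$ is closed in $Z_T$ (which in particular makes it a Borel set, so that $\mu_n(Z_T\setminus K_\eta)$ is meaningful). If $u_k\to u$ in $Z_T$ with $u_k\in K_\eta$, then convergence in $C_w([0,T];H^{s})$ gives $u_k(t)\rightharpoonup u(t)$ in $H^s$ for each $t$, whence by weak lower semicontinuity of the norm $\|u(t)\|_{s}\le\liminf_k\|u_k(t)\|_{s}\le R_1$; meanwhile convergence in $C([0,T];H^{s'})$ together with the continuous embedding $H^{s'}\subset H^{-s-1}$ gives $u_k(t)\to u(t)$ in $H^{-s-1}$ for each $t$, so both $\sup_t\|u(t)\|_{-s-1}$ and the H\"older difference quotients pass to the limit and remain bounded by $R_2$. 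Hence $u\in K_\eta$, and $K_\eta$ is compact.

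Finally I would bound the escaping mass by a union bound: a path $u_n$ fails to lie in $K_\eta$ only if at least one of the two norms exceeds its radius, so by \eqref{tight-stima1}--\eqref{tight-stima2},
\begin{equation*}
\mu_n(Z_T\setminus K_\eta)=\mathbb{P}(u_n\notin K_\eta)\le \mathbb{P}\big(\|u_n\|_{L^\infty(0,T;H^{s})}>R_1\big)+\mathbb{P}\big(\|u_n\|_{C^{0,\beta}([0,T];H^{-s-1})}>R_2\big)\le \frac{\eta}{2}+\frac{\eta}{2}=\eta,
\end{equation*}
uniformly in $n$, which is precisely the tightness condition.

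There is no genuinely hard step here: all the analytic content is packaged into the compact embedding of Corollary \ref{tight_cor}, and the remaining work is the (routine) closedness check that turns the relatively compact ball into a compact set, plus the union bound. The only points demanding a little care are the measurability of $K_\eta$, which the closedness argument settles, and the correct lower-semicontinuity/pointwise-convergence passages that show $K_\eta$ is closed in the mixed strong/weak topology of $Z_T$.
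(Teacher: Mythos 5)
Your proof is correct and follows essentially the same route as the paper: both arguments intersect the two norm-balls furnished by \eqref{tight-stima1}--\eqref{tight-stima2}, invoke the compact embedding of Corollary \ref{tight_cor}, and conclude with the union bound. The only (cosmetic) difference is that the paper simply takes the closure of $B_1\cap B_2$ in $Z_T$ to get a compact set containing it, which sidesteps your explicit closedness verification; your lower-semicontinuity argument is a valid, slightly more laborious substitute.
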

\begin{proof}
Let 
\begin{equation*}
B_1:= \{u \in Z_T : \  \|u\|_{L^\infty(0,T;H^{s})} \le R_1\},
\end{equation*}
and 
\begin{equation*}
B_2:=\{u \in Z_T: \ \|u\|_{C^{0,\beta}([0,T]; H^{-s-1})} \le R_2\}.
\end{equation*}
Let $K$ be the closure of the set $B_1 \cap B_2$ in $Z_T$. 
By Corollary \ref{tight_cor}, $K$ is compact in $Z_T$.  
Then the tightness follows from the estimates \eqref{tight-stima1} and \eqref{tight-stima2}.
\end{proof}

\begin{remark}
\label{rem_inf}
When we work on the infinite time interval, we consider the locally convex topological spaces:
\begin{itemize}
 \item
$C([0,+\infty);H^{s'})$  with metric
$d_1(u,v)=\displaystyle\sum_{k=1}^\infty
     \frac
     1{2^k}\frac{\|u-v\|_{C([0,k];H^{s'})}}{1+\|u-v\|_{C([0,k];H^{s'})}}$;
\item
$C_w([0,+\infty);H^{s})$
 with the topology generated by the family of semi-norms\\
 $\|u\|_{k,v}=\displaystyle\sup_{0\le t\le k}|\langle u(t), v\rangle |$, $k \in
 \mathbb N, v \in H^{-s}$.
\end{itemize}
We define the  space
\begin{equation}
\label{Z_space_inf}
Z_\infty= C([0,+\infty);H^{s'}) \cap C_w([0,+\infty);H^s),
\end{equation}
which is a locally convex topological space with the topology $\mathcal T$
given by  the supremum of the corresponding topologies.
\\
The tightness in $Z_\infty$ of the laws of the processes  $u_n$ defined on the time interval 
$[0,+\infty)$ is equivalent  to the  tightness in $Z_k$
for any $k \in\mathbb N$ of the laws of the processes  $u_n$ 
defined on the time interval $[0,k]$.
\end{remark}

\section{A technical lemma}
\label{tec_lem_sec}

In this Section we prove a technical result which is needed for the proof of pathwise uniqueness of solutions in Section \ref{path_uniq_sec}.

\begin{lemma}
\label{tec_lem}
    Assume  \eqref{sigma+n} and \ref{H1}-\ref{H5}.
Given any $T>0$, if $u_1$, $u_2$ are two 
martingale solutions to \eqref{NLS_abs} on the time interval $[0,T]$, defined on the same filtered probability space $(\Omega, \mathcal{F}, \mathbb{F}, \mathbb{P})$ with respect to the same Brownian motion $W$, with the same initial data in $H^s$, then for
difference  $v:=u_1-u_2$ the equality 
\begin{equation}\label{Ito-unicita}
\begin{split}
 \|v(t)\|_{s}^2
&=2 \int_0^t\Re\big( v(r), -\im \alpha F(u_1(r))+ \im \alpha F(u_2(r))\big)_{s} {\rm d}r
\\
&+ 2\int_0^t \Re\big( v(r), \phi(u_1(r))-\phi(u_2(r))\big)_{s} {\rm d}W(r)
+\int_0^t \|\phi(u_1(r))-\phi(u_2(r))\|^2_{s}\, {\rm d}r
\end{split}
\end{equation}
is satisfied $\mathbb{P}$-a.s. for any $t \in [0,T]$.
\end{lemma}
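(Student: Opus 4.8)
The plan is to obtain \eqref{Ito-unicita} by applying the finite-dimensional It\^o formula to the squared $H^s$-norm of the Galerkin projections $P_n v$ and then letting $n\to\infty$. First I would subtract the two identities \eqref{eqn-ItoFormSolution} satisfied by $u_1$ and $u_2$. Since the initial data coincide, $v(0)=u_1(0)-u_2(0)=0$, and $v=u_1-u_2$ satisfies, as an equality in $H^{-s-1}$,
\[
v(t)=\im\int_0^t\bigl[Av(r)-\alpha(F(u_1(r))-F(u_2(r)))\bigr]\,\df r+\int_0^t\bigl(\phi(u_1(r))-\phi(u_2(r))\bigr)\,\df W(r).
\]
Applying $P_n$, which commutes with $A$ (both are Fourier multipliers) and satisfies $\|P_n\|_{\mathcal L(H^{-s-1})}\le1$, gives the corresponding identity for $P_n v$, whose drift $\im AP_nv-\im\alpha P_n(F(u_1)-F(u_2))$ and diffusion $P_n(\phi(u_1)-\phi(u_2))$ both take values in the finite-dimensional space $H_n$. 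Thus $P_n v$ is a genuine $H_n$-valued It\^o process.

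Next I would apply the standard It\^o formula to $\Phi(x)=\|x\|_s^2$ on $H_n$, for which $\Phi'(x)[h]=2\Re(x,h)_s$ and $\Phi''(x)[h,k]=2\Re(h,k)_s$. The drift contribution splits into $2\Re(P_nv,\im AP_nv)_s$ and $-2\Re(P_nv,\im\alpha P_n(F(u_1)-F(u_2)))_s$, while the quadratic-variation term produces $\|P_n(\phi(u_1)-\phi(u_2))\|_s^2$. The first drift term \emph{vanishes} by the computation behind \eqref{prodotto-reale}: since $A$ commutes with $(I-\Delta)^{s/2}$ and is self-adjoint, $(P_nv,AP_nv)_s$ is real, and hence $(P_nv,\im AP_nv)_s$ is purely imaginary. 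This cancellation is the crucial structural point, as it removes the only term ($Av$) that does not belong to $H^s$. We thereby reach the Galerkin analogue of \eqref{Ito-unicita}, with $v$ replaced by $P_nv$ and $F(u_i),\phi(u_i)$ by their $P_n$-projections.

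Finally I would pass to the limit $n\to\infty$. Since $P_n$ is a self-adjoint projection with $\|P_n\|_{\mathcal L(H^s)}\le1$ and $P_n\to I$ strongly on $H^s$, and since pathwise $u_1,u_2\in C_w([0,T];H^s)\subset L^\infty(0,T;H^s)$, each integrand converges pointwise in $t$ and is dominated, using \eqref{F3} and \ref{H4} to bound $\|F(u_1)-F(u_2)\|_s$ and $\|\phi(u_1)-\phi(u_2)\|_s$, by a fixed $\mathbb P$-a.s.\ $[0,T]$-integrable function. Hence the two Lebesgue integrals converge $\mathbb P$-a.s.\ by dominated convergence. For the martingale term the same domination gives $\int_0^T|2\Re(P_nv,P_n(\phi(u_1)-\phi(u_2)))_s-2\Re(v,\phi(u_1)-\phi(u_2))_s|^2\,\df r\to0$ $\mathbb P$-a.s., so by continuity of the stochastic integral (after localizing with $\tau_M=\inf\{t:\|u_1(t)\|_s\vee\|u_2(t)\|_s>M\}$ to make the bound uniform, then sending $M\to\infty$ via $\tau_M\uparrow T$) the stochastic integrals converge in probability, hence a.s.\ along a subsequence. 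Passing to the limit yields \eqref{Ito-unicita} for each fixed $t$, and the a.s.\ continuity of both sides upgrades it to hold simultaneously for all $t\in[0,T]$. The main obstacle is precisely this passage to the limit in the stochastic integral and the attendant a.s.\ bookkeeping; the localization by $\tau_M$ is what makes the domination uniform, while the vanishing of $\Re(P_nv,\im AP_nv)_s$ is what makes the whole scheme work despite $Av$ living only in $H^{s-2}$.
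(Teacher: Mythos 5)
Your argument is correct, and it reaches \eqref{Ito-unicita} by a genuinely different regularization than the paper's. The paper smooths $v$ with the resolvent operators $R_\lambda:=\lambda(\lambda I+A)^{-1}$, which map $H^s$ into $H^{s+2}$ and converge strongly to the identity on $H^s$; it applies the (infinite-dimensional, Hilbert-space) It\^o formula to $\|R_\lambda v(t)\|_s^2$, kills the dispersive term through the same structural cancellation you invoke --- since $A$ and $R_\lambda$ commute, $\Re\big(R_\lambda v,\im R_\lambda Av\big)_s=\Re\big[\im\|A^{1/2}R_\lambda v\|_s^2\big]=0$, the identity behind \eqref{prodotto-reale} --- and then sends $\lambda\to\infty$ exactly as you send $n\to\infty$: pathwise dominated convergence for the two Lebesgue integrals (using \ref{H2}(i) where you use \ref{H4}; both are available under the lemma's hypotheses) and $L^2$-in-time convergence of the integrands, hence convergence in probability and a.s.\ convergence along a subsequence, for the stochastic integral. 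Your Galerkin variant substitutes $P_n$ for $R_\lambda$; it works for the same two reasons ($P_n$ is a Fourier multiplier commuting with $A$ and with the norm-defining operator $(I-\Delta)^{s/2}$, and $\|P_n\|_{\mathcal L(H^s)}\le 1$ with $P_nw\to w$ strongly, cf.\ \eqref{bound_H^s_Pn} and \eqref{P_nU'}), and it buys a small simplification: after projecting, $P_nv$ is a genuine $H_n$-valued It\^o process, so only the elementary finite-dimensional It\^o formula is needed, whereas the paper must invoke an It\^o formula in $H^s$ for the composed process $R_\lambda v$. What the resolvent route buys in exchange is that it regularizes by exactly the two derivatives the drift is missing without discretizing, which is the more common device when no convenient eigenbasis is at hand. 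Two bookkeeping points in your write-up are handled correctly but deserve emphasis: the localization by $\tau_M$ is optional (pathwise boundedness of $\sup_{t\le T}\|u_i(t)\|_s$, which follows from $u_i\in C_w([0,T];H^s)$, already furnishes an a.s.\ integrable dominating function, and a.s.\ convergence of $\int_0^T|g_n-g|^2\,{\rm d}r$ suffices for convergence in probability of the stochastic integrals), and the upgrade from ``fixed $t$, a.s.\ along a subsequence'' to ``a.s.\ for all $t\in[0,T]$'' via continuity of both sides is exactly the step the paper also relies on implicitly, so your explicit mention of it is welcome.
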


\begin{proof}
Let $T>0$, we consider two martingale solutions on the time interval $[0,T]$,  defined on the same filtered probability space with the same Brownian motion and initial data in $H^s$; thanks to Proposition \ref{reg_Str_est} we have
$u_1,u_2 \in C([0,T];H^{s})$.
The  difference  $v:=u_1-u_2$ satisfies
\begin{equation*}
\begin{cases}
{\rm d}v(t)+\im \left[  - Av(t)+ \alpha\left(F(u_1(t))-F(u_2(t))\right)\right]{\rm d}t= \left[\phi (u_1(t))-\phi (u_2(t)) \right] {\rm d}W(t)
\\
v(0)=0.
\end{cases}
\end{equation*}
The process $v$ is not regular enough to apply the It\^o formula to $\|v\|^2_s$; thus we justify the computations leading to equality \eqref{Ito-unicita} by a regularization procedure. Given $\lambda>0$, we define $R_\lambda:=\lambda (\lambda I +A)^{-1}$. From \cite[Section 1.3]{Pazy} we know that, for any $x \in H^s$, \begin{equation}
\label{convergence R_lam}
R_\lambda x \rightarrow x \ \  \text{in} \ \ H^s \quad  \text{as} \quad  \lambda \rightarrow \infty. 
\end{equation}
Moreover, $R_\lambda$ is a linear bounded operator from $H^s$ to $H^{s+2}$. Hence, the equation
\[
R_\lambda v(t)= \im \int_0^t \left[R_\lambda Av(s) - \alpha R_\lambda \left(F(u_1(s))-F(u_2(s))\right)\right]{\rm d}s + \int_0^t R_\lambda\left[\phi (u_1(s))-\phi (u_2(s)) \right] {\rm d}W(s)
\\
\]
holds $\mathbb{P}$-a.s. in $H^s$, for any $t \in [0,T]$. We apply the It\^o formula to $\|R_\lambda v(t)\|^2_{s}$, for $t\in[0,T]$ and obtain
\begin{equation}\label{Ito-unicita_reg}
\begin{split}
\|R_\lambda v(t)\|_{s}^2
&=2\int_0^t \Re\big(R_\lambda v(r), -\im R_\lambda A(v(r))\big)_{s} {\rm d}r 
+ 2 \int_0^t \Re\big(R_\lambda v(r), -\im \alpha R_\lambda F(u_1(r))+ \im \alpha R_\lambda F(u_2(r))\big)_{s} {\rm d}r 
\\
&+ 2\int_0^t \Re\big( R_\lambda v(r), R_\lambda[\phi(u_1(r))-\phi(u_2(r))]\big)_{s} {\rm d}W(r)
+\int_0^t \|R_\lambda[\phi(u_1(r))-\phi(u_2(r))]\|^2_{s}\, {\rm d}r .
\end{split}\end{equation}
Since the operators $A$ and $R_\lambda$ commute, we have 
\begin{equation*}
\Re \big( R_\lambda v(r), iR_\lambda A v(r)\big)_{s}= \Re[\im \|A^{\frac12}R_\lambda v(r)\|^2_{s}]=0, \quad r \in [0,t], \quad \lambda >0.
\end{equation*}
We take the limit as $\lambda \rightarrow \infty$ in the terms of equation \eqref{Ito-unicita_reg}.
For $r \in [0,t]$, exploiting the regularity $u_1, u_2, v \in C([0,T];H^s)$ and $F(u_1), F(u_2) \in C([0,T];H^s)$ in virtue of \eqref{stimaF}, from \eqref{convergence R_lam}, we infer $\mathbb{P}$-a.s.
\begin{equation*}
    \Re\big(R_\lambda v(r), -\im \alpha R_\lambda F(u_1(r))+ \im \alpha R_\lambda F(u_2(r))\big)_{s} \rightarrow \Re\big( v(r), -\im \alpha  F(u_1(r))+ \im \alpha  F(u_2(r))\big)_{s}, \quad \text{as} \ \lambda \rightarrow \infty,
\end{equation*}
and the Dominated Convergence Theorem yields 
\begin{equation*}
    \int_0^t\Re\big(R_\lambda v(r), -\im \alpha R_\lambda F(u_1(r))+ \im \alpha R_\lambda F(u_2(r))\big)_{s}\, {\rm d}r \rightarrow \int_0^t \Re\big( v(r), -\im \alpha  F(u_1(r))+ \im \alpha  F(u_2(r))\big)_{s}\,{\rm d}r, \quad \text{as} \ \lambda \rightarrow \infty.
\end{equation*}
In virtue of Assumption \ref{H2}(i) and \eqref{convergence R_lam}, we get, for any $r \in [0,t]$, $\mathbb{P}$-a.s.
\begin{equation*}
   \|R_\lambda[\phi(u_1(r))-\phi(u_2(r))]\|^2_{s} \rightarrow \|\phi(u_1(r))-\phi(u_2(r))\|^2_{s}, \quad \text{as} \ \lambda \rightarrow \infty.
\end{equation*}
Thus, by the Dominated Convergence Theorem, 
\begin{equation*}
    \int_0^t \|R_\lambda[\phi(u_1(r))-\phi(u_2(r))]\|^2_{s}\, {\rm d}r \rightarrow \int_0^t  \|\phi(u_1(r))-\phi(u_2(r))\|^2_{s}{\rm d}r, \quad \text{as} \ \lambda \rightarrow \infty.
\end{equation*}
For the stochastic integral, since by Assumption \ref{H2}(i), \eqref{convergence R_lam} and the Dominated Converge Theorem, it holds 
\begin{equation*}
    \int_0^t|\Re\big( R_\lambda v(r), R_\lambda[\phi(u_1(r))-\phi(u_2(r))]\big)_{s}- \Re\big( v(r), [\phi(u_1(r))-\phi(u_2(r))]\big)_{s}|^2\, {\rm d}r \rightarrow 0, \quad \text{as} \ \lambda \rightarrow \infty,
\end{equation*}
passing to the limit along a subsequence we infer $\mathbb{P}$-a.s.
\begin{equation*}
    \int_0^t\Re\big( R_\lambda v(r), R_\lambda[\phi(u_1(r))-\phi(u_2(r))]\big)_{s} {\rm d}W(r) \rightarrow \int_0^t \Re\big(v(r), [\phi(u_1(r))-\phi(u_2(r))]\big)_{s} {\rm d}W(r), \quad \text{as} \ \lambda \rightarrow \infty.
\end{equation*}
Thus, in the limit $\lambda \rightarrow \infty$, for $t\in[0,T]$, we obtain the equality \eqref{Ito-unicita}.
\end{proof}

\section{Compact Riemannian manifolds}
\label{manifolds}
All the results we proved so far in the case of a $d$-dimensional torus hold true also in the more general case of compact Riemannian manifolds.

Let $(M, g)$ be a $d$-dimensional compact Riemannian manifold without boundary. We consider the stochastic NLS equation
\begin{equation}
\label{NLS_abs_man}
\begin{cases}
{\rm d} u(t)+\im  \left[ -A u(t)+\alpha  F(u(t))  \right] \,{\rm d}t
= \phi(u(t))  \,{\rm d} W(t) , \qquad t>0
\\
u(0)=u^0,
\end{cases}
\end{equation}
where $u:[0,\infty) \times M \times \Omega \rightarrow \mathbb{C}$, A is the negative Laplace-Beltrami operator $-\Delta_g$ on $(M,g)$ and F is given by \eqref{F}.

The notion of Sobolev spaces on Riemannian manifold is well known (see e.g. \cite[Appendix B]{BHW2019} and the therein references) and the same embeddings considered in section \ref{math_ass_sec} hold true in this framework. Moreover, the assumptions on the noise in section \ref{sec:noise} can be analougsly formulated on manifolds.

With the same notation used throughout the paper, understood now for the case of compact Riemannian manifolds, we get the following result.
\begin{theorem}
Let  $\sigma \in \mathbb N$, $s>\frac d2$.
\begin{itemize}
    \item [i)] Assume \ref{H1}-\ref{H5}. Then, for any initial datum $u^0 \in H^s$ there exists a unique global-in-time strong solution to \eqref{NLS_abs_man} with $\mathbb{P}$-a.s. paths in $C([0,\infty);H^s)$.
\item [ii)] Assume \ref{H1}-\ref{H5bis}. Then there exists at least one invariant measure $\mu$  for equation \eqref{NLS_abs_man} such that 
\begin{equation*}
\int_{H^s} \|x\|_s^p {\rm d}\mu(x)<\infty,
\end{equation*}
where $p$ is the parameter in condition \eqref{seconda-ipotesi-phi}.
\item [iii)] Assume \ref{H1}-\ref{H5bisbis}. Then 
the zero solution to \eqref{NLS_abs_man} is exponentially  stable in the $p$-mean (with $p$ the parameter in condition \eqref{terza-ipotesi-phi})  and  exponentially stable with probability one. Moreover, $\mu=\delta_0$ is the unique invariant measure for equation \eqref{NLS_abs_man}.
    \end{itemize}
\end{theorem}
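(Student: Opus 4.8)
The strategy is to observe that none of the arguments in Sections \ref{S-wellposedness}--\ref{erg_res_sec} and in Appendices \ref{tight_sec_main}--\ref{tec_lem_sec} uses any feature specific to the torus beyond a handful of abstract structural properties of the pair $\bigl(A,\{H^s\}_s\bigr)$; once these are re-established for $A=-\Delta_g$ on $(M,g)$, the three statements follow by repeating the proofs of Theorems \ref{mainTH}, \ref{misura-invariante} and \ref{unic-misura-invariante} verbatim.

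First I would set up the functional-analytic framework. Since $M$ is compact and without boundary, the Laplace--Beltrami operator $-\Delta_g$ is a non-negative self-adjoint operator on $L^2(M)$ with compact resolvent; hence its spectrum is a discrete sequence $0=\lambda_0\le\lambda_1\le\cdots\to\infty$ and there is an $L^2$-orthonormal basis $\{e_k\}_k$ of smooth eigenfunctions, $Ae_k=\lambda_ke_k$. Defining $J^s:=(I-\Delta_g)^{s/2}$ through the functional calculus of $A$ and $H^s(M):=\{u:J^su\in L^2\}$, one recovers the scale \eqref{big_emb} together with the spectral projections $P_n$ of \eqref{P_n_def}, now projecting onto the span of the eigenfunctions with $\lambda_k\le n$. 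All the elementary properties used later --- \eqref{bound_H^s_Pn}, \eqref{limit_H^s}, \eqref{P_nU'}, and the commutation of $A$ with $J^s$ yielding $\Re(u,\im A u)_s=0$ (equation \eqref{prodotto-reale}) --- depend only on this functional calculus and are therefore unchanged. Likewise, $A$ self-adjoint gives, by Stone's theorem, the unitary $C_0$-group $(e^{\im t A})$ on each $H^s$ used in the mild formulation \eqref{mild_form}.

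Next I would transfer the analytic inputs. The Sobolev embeddings \eqref{Sobolev} and the compactness of $H^s\subset H^{s'}$ (hence of $H^s\subset H^{-s-1}$) are the Sobolev and Rellich--Kondrachov embedding theorems on compact manifolds; see \cite[Appendix B]{BHW2019}. The only genuinely analytic ingredient that must be reproved is the Moser/product estimate \eqref{Moser}, whose torus proof rests on Fourier multipliers: on a compact manifold it follows from the fact that $H^r(M)$ is a Banach algebra for $r>\frac d2$ together with the corresponding fractional product estimates, which are available in the manifold setting. Granted \eqref{Moser}, the induction in Lemma \ref{Lemma_F} and the resulting bounds \eqref{stimaF}--\eqref{F3} carry over word for word, as do the tightness and compactness results of Appendix \ref{tight_sec_main}, which are stated purely in terms of the abstract triple $H^s\subset H^{s'}\subset H^{-s-1}$.

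With these ingredients in place, the remaining steps are identical to the torus case: the Galerkin system \eqref{Galerkin} is well posed (Proposition \ref{local_sol}), the Lyapunov computation \eqref{Ito-dt} and the a priori bounds of Propositions \ref{p_tight_1}--\ref{p_tight_2} use only \eqref{stimaF}, \eqref{prodotto-reale} and Assumptions \ref{H1}--\ref{H5}, while the passage to the limit (Lemma \ref{limit}, Propositions \ref{prop_ex_mar}--\ref{reg_Str_est}) and the pathwise uniqueness (Proposition \ref{path_uniq_prop}) use only the embeddings and \eqref{F3}; this gives i). Statements ii) and iii) then follow by repeating Sections \ref{mart_stat_sec}--\ref{uni_inv_meas_sec} under \ref{H5bis} and \ref{H5bisbis} respectively. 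The one point demanding care --- and the place where the torus structure is genuinely replaced rather than merely relabelled --- is the product estimate \eqref{Moser} on $(M,g)$; everything else is a matter of reading the existing proofs with $\{e_k\}$ reinterpreted as the spectral eigenbasis of $-\Delta_g$.
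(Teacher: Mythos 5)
Your proposal is correct and follows essentially the same route as the paper: the paper's proof is a two-line remark stating that the argument is a minor adaptation of Sections \ref{S-wellposedness} and \ref{erg_res_sec}, with the only point requiring care being the Moser estimates on compact Riemannian manifolds, for which it cites \cite[Corollary 2.2]{BrzezniakStrichartz}. You correctly identify the Moser/product estimate \eqref{Moser} as the single genuinely geometric ingredient and supply the (sound, if more detailed than necessary) spectral-calculus bookkeeping that the paper leaves implicit.
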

\begin{proof}
The proof is a minor adaptation of the the proofs of the results in sections \ref{S-wellposedness} and \ref{erg_res_sec}. One has to be careful just in considering the Moser estimates in the case of compact Riemannian manifolds; they are  given in \cite[Corollary 2.2]{BrzezniakStrichartz}.
\end{proof}


\section*{Acknowledgements}

B.F., M.M. and M.Z. are members of Gruppo Nazionale per l’Analisi Matematica, la Probabilità e le loro Applicazioni (GNAMPA) of the Istituto Nazionale di Alta Matematica (INdAM), and gratefully acknowledge financial support through the project CUP$-$E53C22001930001. 
B.F and M.Z. than the support of the CAMRisk  at the University of Pavia.
\\
The authors are grateful to Filippo Giuliani for useful discussion.

\end{document}